\documentclass[a4paper,11pt]{amsart}
\pdfoutput=1

\usepackage{dsfont}
\usepackage{multicol}	     
\usepackage[bottom]{footmisc}

\usepackage[title]{appendix}

\usepackage{pgf, tikz, tikz-cd}
\usepackage{dsfont}
\usetikzlibrary{matrix}
\usetikzlibrary{braids}
\usetikzlibrary{arrows}
\usetikzlibrary{patterns}
\usepackage{braids}

\usepackage{mathtools}
\mathtoolsset{showonlyrefs=true}

\usepackage{amsmath,amssymb,amsthm,amsfonts, mathrsfs, fancyhdr}

\usepackage{anyfontsize}

\usepackage{upgreek}

\usepackage{enumitem}
\newcommand{\mylabel}[2]{#2\def\@currentlabel{#2}\label{#1}}
\makeatother

\usepackage{todonotes}

\setlist[description]{leftmargin=*}
\usepackage{amscd}
\usepackage[active]{srcltx}
\usepackage{verbatim}
\usepackage{epsfig,graphicx,color}
\usepackage{graphicx}
\usepackage{mathtools,xparse}
\usepackage[english]{babel}

\usepackage{tabularx,ragged2e}
\newcolumntype{L}{>{\RaggedRight\arraybackslash}X}

\usepackage{xltabular}
\usepackage{booktabs}
\usepackage{caption}

\usepackage{pgf, tikz, tikz-cd}
\usepackage{dsfont}
\usetikzlibrary{matrix}

\usepackage{capt-of}

\usepackage{subcaption}

\usepackage{tkz-euclide}
\usetikzlibrary{arrows}
\usetikzlibrary{calc}
\usetikzlibrary{patterns}
\usepackage{tikz-cd}
\usepackage{mathtools}
\usepackage{enumitem}
\usepackage{float}
\usepackage{empheq}

\usepackage{fouriernc}

\usepackage[left=2.7cm,right=2.7cm,top=3cm,bottom=3cm]{geometry}

\usepackage[unicode,bookmarks, pdftex, backref = page]{hyperref}
\definecolor{newblue}{RGB}{0,102,204}
\definecolor{newred}{RGB}{206,32,41}

\definecolor{qqqqcc}{rgb}{0,0,0.8}
\definecolor{ffqqqq}{rgb}{1,0,0}
\definecolor{rvwvcq}{rgb}{0.08235294117647059,0.396078431372549,0.7529411764705882}

\hypersetup{colorlinks=true,citecolor=newblue,linkcolor=newred,
  urlcolor=magenta,
pdfpagemode=UseNone, breaklinks=true}

\usepackage{apptools}
\AtAppendix{\counterwithin{theorem}{section}}

\newtheorem{theorem}{Theorem}[section]
\newtheorem{proposition}[theorem]{Proposition}
\newtheorem{lemma}[theorem]{Lemma}
\newtheorem{corollary}[theorem]{Corollary}

\newtheorem{letterthm}{Theorem}

\theoremstyle{definition}
\newtheorem{remark}[theorem]{Remark}
\newtheorem{example}[theorem]{Example}
\newtheorem{definition}[theorem]{Definition}

\renewcommand{\setminus}{-}

\renewcommand{\to}{\longrightarrow}

\newcommand{\B}{\mathsf{B}}
\newcommand{\C}{\mathsf{C}}
\newcommand{\D}{\mathsf{D}}
\renewcommand{\a}{\mathsf{a}}
\renewcommand{\b}{\mathsf{b}}
\newcommand{\s}{\mathsf{s}}

\DeclareMathOperator{\Sym}{Sym}
\DeclareMathOperator{\Aut}{Aut}

\usepackage{epigraph}
\usepackage{listings}
\usepackage{color}

\lstdefinestyle{customc}{
  belowcaptionskip=1\baselineskip,
  breaklines=true,
 frame=L,
  xleftmargin=\parindent,
  language=GAP,
  showstringspaces=false,
  basicstyle=\footnotesize\ttfamily,
  keywordstyle=\bfseries\color{green!40!black},
  commentstyle=\itshape\color{purple!40!black},
  identifierstyle=\color{blue},
  stringstyle=\color{olive},
}

\definecolor{codegreen}{rgb}{0,0.6,0}
\definecolor{codegray}{rgb}{0.5,0.5,0.5}
\definecolor{codepurple}{rgb}{0.58,0,0.82}
\definecolor{backcolour}{rgb}{0.95,0.95,0.92}

\lstdefinestyle{mystyle}{
    backgroundcolor=\color{backcolour},   
    commentstyle=\color{codegreen},
    keywordstyle=\color{magenta},
    numberstyle=\tiny\color{codegray},
    stringstyle=\color{codepurple},
    basicstyle=\ttfamily\footnotesize,
    breakatwhitespace=false,         
    breaklines=true,                 
    captionpos=t,                    
    keepspaces=true,                 
    numbers=left,                    
    numbersep=5pt,                  
    showspaces=false,                
    showstringspaces=false,
    showtabs=false,                  
    tabsize=2
}

\title[Finite quotients of full surface braid groups]{Finite quotients of full surface braid groups\\and complex surfaces of general type:\\cyclic, dihedral, and extra-special quotients}

\date{\today}

\author[Massimiliano Alessandro]{Massimiliano Alessandro}
\address{Fachrichtung Mathematik 
\newline \indent
Campus, Geb\"{a}ude E2 4 
\newline \indent
Universit\"{a}t des Saarlandes 
\newline \indent
66123 Saarbr\"{u}cken, Germany} 
\email{malessandro@math.uni-sb.de}

\author[Michelangelo Migliano]{Michelangelo Migliano}
\email{michelangelo.migliano@gmail.com}

\author[Francesco Polizzi]{Francesco Polizzi $^{*}$}
\address{Dipartimento di Matematica e Applicazioni ``Renato Caccioppoli''
  \newline\indent
  Universit\`a degli Studi di Napoli Federico II
  \newline\indent
Via Cintia, Monte S. Angelo
  \newline\indent
  I-80126  Napoli, Italy}
\email{francesco.polizzi@unina.it}

\thanks{\emph{2020 Mathematics Subject
Classification.} 20F36, 20D15, 14J29, 14E20, 14L30, 14Q10}

\keywords{Surface braid groups, Finite groups, Complex surfaces of general type, Finite Galois covers}

\begin{document}

\begin{abstract}
Let $\mathsf{B}_2(\Sigma_g)$ be the full braid group on two strings on a compact Riemann surface of genus $g$. We compute the number of finite cyclic, dihedral and extra-special quotients $\varphi \colon \mathsf{B}_2(\Sigma_g) \to G$, under the assumption that the quotient map $\varphi$ does not factor through $\pi_1(\operatorname{Sym^2}\Sigma_g)$. We then apply our algebraic results to the geometric problem of constructing smooth surfaces of general type as Galois covers of  $\operatorname{Sym^2}(\Sigma_g)$ branched on the diagonal.
In particular, we construct two $3$-dimensional families of minimal surfaces of general type with $p_g=7$, $q=4$ and $K^2=32$ such that members of different families have the same biregular invariants and the same Betti numbers, but different torsion part for the first homology group.
\end{abstract}

\maketitle

\section*{Introduction} \label{sec:intro}

Let $\Sigma_g$ be a compact Riemann surface of genus $g$ and let $\mathsf{B}_2(\Sigma_g)$ denote the full braid group on two strings of $\Sigma_g$, which can be interpreted as the fundamental group $\pi_1(\Sym^2 \Sigma_g - \delta)$, where $\delta \subset \Sym^2(\Sigma_g)$ is the diagonal.
Full surface braid groups play an important role in low-dimensional topology and related areas, connecting ideas from algebraic topology, group theory, and algebraic geometry.
These groups are finitely presented for every choice of $g$, and several presentations appear in literature, see for instance \cite{Scott70, GM01, Bel04}; in the present work we use the presentation with generators $a_1, b_1, \dots, a_g, b_g,  \sigma$ given in \cite[Theorem 1.2]{Bel04}, see Theorem \ref{thm:Bellingeri}. 

In the sequel, we always assume $g\geq 2 $.
Given a finite group $G$, a group epimorphism $\varphi\colon \mathsf{B}_2(\Sigma_g)\to G$ is called an \emph{admissible epimorphism} of type $(g,n)$ if $\varphi(\sigma)$ has order $n\geq 2$. 
By using the isomorphism $\mathsf{B}_2(\Sigma_g)\simeq \pi_1(\Sym^2 \Sigma_g - \delta)$ one sees that  $\varphi\colon \mathsf{B}_2(\Sigma_g)\to G$ is admissible if and only if $\varphi$ does not factor through $\pi_1(\Sym^2 \Sigma_g)$, see Remark \ref{rmk:admissible_epimorphism}.
Let $N_g(G)$ denote the total number of admissible epimorphisms $\varphi\colon \mathsf{B}_2(\Sigma_g)\to G$.
In this paper we compute the number $N_g(G)$ when $G$ is cyclic, dihedral and extra-special. Our algebraic investigation was also motivated by some geometric reasons that we explain below.

In terms of the isomorphism $\mathsf{B}_2(\Sigma_g)\cong \pi_1(\Sym^2 \Sigma_g - \delta)$ the generator $\sigma$ corresponds to the homotopy class in $\Sym^2 \Sigma_g - \delta$ of a topological loop in $\Sym^2 \Sigma_g $ that ``winds once around the diagonal $\delta$". By Grauert-Remmert extension theorem (see \cite{GrRem58}, \cite[XII.5.4]{SGA1}, \cite{DeGr94}) and Serre's GAGA (see \cite{Serre56}, \cite[Chapter 6]{Serre08}) the set of admissible epimorphisms $\varphi\colon \mathsf{B}_2(\Sigma_g)\to G$ of type $(g,n)$,
up to automorphisms of $G$, is in bijection with the set of isomorphism classes of Galois covers
\begin{equation} \label{G-cover}
f \colon S \to \operatorname{Sym}^2 \Sigma_g,
\end{equation}
with Galois group $G$, branched precisely over $\delta$ with branching order $n$, cf. \cite[Section 1]{Pol18}.
Note that in \eqref{G-cover} the total space $S$ is a smooth projective surface, since $\operatorname{Sym}^2 \Sigma_g$ is so and $\delta \subset \operatorname{Sym}^2 \Sigma_g$ is a smooth divisor. Furthermore, since $g\geq 2 $ one sees that $S$ is a minimal surface of general type whose invariants can be computed in terms of $|G|$, $g$ and $n$, see Proposition \ref{prop:invariants_S}. It is worth mentioning here that in case $n=2$ we have
\begin{equation*}
\chi_{\operatorname{top}}(S)=2\, |G|(g-1)^2, \quad K_{S}^2=4\, |G|(g-1)^2,  \quad
\chi(\mathcal{O}_S)=\frac{1}{2} \, |G|(g-1), 
\end{equation*}
hence in particular one has $K_S^2=8\chi(\mathcal{O}_S)$.
\vspace{0.3cm}

We can now state our main results (recall that we are assuming $g \geq 2$ everywhere).

\begin{letterthm}[{Theorem \ref{thm:cyclic}}]
Let $\C_m$ be the cyclic group of order $m$. Then the number
$N_g(\C_m)$ of admissible epimorphisms
$\varphi \colon \B_2(\Sigma_g) \to \C_m$ is given by
\begin{equation}
N_g(\C_m)=
\begin{cases}
0 & \text{if } 2\nmid m,\\[2mm]
2^{2g}J_{2g}\left(\dfrac{m}{2}\right)
& \text{if } 2\mid m.
\end{cases}
\end{equation}
\end{letterthm}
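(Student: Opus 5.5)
Since $\C_m$ is abelian, every homomorphism $\varphi \colon \B_2(\Sigma_g) \to \C_m$ factors through the abelianization, so I would first compute $H_1(\B_2(\Sigma_g),\mathbb{Z})$ from the presentation of Theorem \ref{thm:Bellingeri}. The braid relations in Bellingeri's presentation involve conjugates of $\sigma$ such as $\sigma^{-1} a_i \sigma^{-1} a_i$. After abelianizing, these become trivial or consistent; for instance the relation $a_i \sigma^{-1} a_i \sigma^{-1} = \sigma^{-1} a_i \sigma^{-1} a_i$ is automatically satisfied. The surface-type relation, of the form $[a_1,b_1^{-1}]\cdots[a_g,b_g^{-1}] = \sigma^2$, abelianizes to $2\bar\sigma = 0$. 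The relations between $\sigma$ and the $a_j$, $b_j$ with $j \neq i$ all become commutators. The expected outcome is
\begin{equation}
H_1(\B_2(\Sigma_g),\mathbb{Z}) \cong \mathbb{Z}^{2g} \oplus \mathbb{Z}/2\mathbb{Z},
\end{equation}
with the $\mathbb{Z}/2$ generated by the class of $\sigma$. This matches $H_1(\Sym^2\Sigma_g) \cong \mathbb{Z}^{2g}$ together with the fact that the diagonal has class divisible by $2$.

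It follows that homomorphisms $\B_2(\Sigma_g)\to\C_m$ correspond bijectively to tuples $(x_1,\dots,x_{2g},s)$ with $x_k\in\C_m$ and $s\in\C_m$ satisfying $2s=0$. Admissibility requires the order $n$ of $s$ to be at least $2$, so $s$ must be the unique element of order $2$. If $m$ is odd there is no such element, which gives $N_g(\C_m)=0$. If $m$ is even, then $s=m/2$ is forced, and we must count the tuples $(x_1,\dots,x_{2g})$ for which $x_1,\dots,x_{2g},m/2$ together generate $\C_m$.

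The key step, and the only delicate one, is this generation count. Write $m=2k$. The subgroup $H=\langle x_1,\dots,x_{2g}\rangle$ is cyclic of some order $d\mid m$, and $H+\langle m/2\rangle=\C_m$ exactly when $H$ has index $1$ or $2$ with $m/2\notin H$. In terms of the images in the quotient $\C_m/\langle m/2\rangle\cong\C_k$, the condition is that the reductions $\bar x_1,\dots,\bar x_{2g}$ generate $\C_k$. This holds because the preimage of $\C_k$ under the quotient map is $H+\langle m/2\rangle$.

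Counting then proceeds in two stages. The number of $2g$-tuples generating $\C_k$ is Jordan's totient $J_{2g}(k)$. Each tuple in $\C_k^{2g}$ has $2^{2g}$ lifts to $\C_m^{2g}$, since the reduction map $\C_m\to\C_k$ is $2$-to-$1$. Hence
\begin{equation}
N_g(\C_m)=2^{2g}J_{2g}\!\left(\tfrac m2\right).
\end{equation}
I expect the main obstacle to be the bookkeeping of Bellingeri's relations in the abelianization, in particular checking that no relation forces $\bar\sigma$ to be trivial or imposes a constraint on the $a_i,b_i$. The counting step afterwards is routine.
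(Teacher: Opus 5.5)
Your proposal is correct and follows essentially the same route as the paper. Computing $H_1(\B_2(\Sigma_g),\mathbb{Z})\cong\mathbb{Z}^{2g}\oplus\mathbb{Z}/2\mathbb{Z}$ is a repackaging of the paper's observation that for an abelian target only \eqref{B4} and \eqref{TR} survive, both forcing $\s^2=1$; the count is then exactly the argument of Proposition \ref{prop:cyclic_generators}, which passes to $\C_m/\langle x^{m/2}\rangle\simeq\C_{m/2}$ and lifts each of the $J_{2g}(m/2)$ generating tuples in $2^{2g}$ ways.
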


\begin{letterthm} [{Theorem \ref{thm:dihedral}}]
Let $m\geq 2$ be a positive integer and let $\D_{2m}$ be the dihedral group of order $2m$. Then the number $N_g(\D_{2m})$ of admissible epimorphisms $\varphi \colon \mathsf{B}_2(\Sigma_g) \to \D_{2m}$ is given by 
\begin{equation} 
N_g(\D_{2m}) = \varepsilon \cdot mJ_{2g}(m), \quad 
\mathrm{where } \; \;  \varepsilon =
\begin{cases}
1 &\text{if } \, m \notin \{2, \, 4\} \\
2 &\text{if } \, m=4 \\
3 \cdot 2^{2g-1} & \text{if } \, m=2.
\end{cases}
\end{equation}
\end{letterthm}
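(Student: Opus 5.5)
The plan is to work directly with Bellingeri's presentation (Theorem \ref{thm:Bellingeri}). An admissible epimorphism $\varphi\colon \B_2(\Sigma_g)\to \D_{2m}$ is a choice of images
\[
\alpha_r=\varphi(a_r),\qquad \beta_r=\varphi(b_r),\qquad s=\varphi(\sigma)\neq 1
\]
satisfying the relations, together with surjectivity. I will split according to whether $s$ is a reflection or a nontrivial rotation. Write $\D_{2m}=\langle \rho,\tau \mid \rho^m=\tau^2=1,\ \tau\rho\tau=\rho^{-1}\rangle$. I will use the relations in the form
\begin{itemize}
\item[(R1)] $a_r\sigma^{-1}a_r\sigma^{-1}=\sigma^{-1}a_r\sigma^{-1}a_r$, and similarly for $b_r$;
\item[(R2)] the mixed commutation-type relations for indices $s<r$;
\item[(R3)] $a_r\sigma^{-1}b_r=\sigma b_r\sigma a_r\sigma$;
\item[(TR)] $\prod_r[a_r,b_r^{-1}]=\sigma^2$.
\end{itemize}

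\textbf{Reflection case (main term).} Suppose $s$ is a reflection and all $\alpha_r,\beta_r$ are rotations. Conjugation by $s$ inverts rotations, so (R1), (R2) and (R3) reduce to identities in $\langle\rho\rangle$. For example, (R3) becomes $\rho^{x-y}s=\rho^{x-y}s$. Also (TR) holds, since both sides are trivial. Surjectivity is then equivalent to the $2g$ exponents generating $\mathbb{Z}/m$. This gives $J_{2g}(m)$ choices of the tuple, times $m$ choices of the reflection $s$, which is exactly $mJ_{2g}(m)$. The next step is to show that, for $m\notin\{2,4\}$, no other configuration occurs.
\begin{itemize}
\item If $s$ is a reflection and some $\alpha_r$ or $\beta_r$ is a reflection, I plug into (R1) and (R3). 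A product of reflections is a rotation, so (R3) compares a rotation with a reflection, or forces $\rho^{k}=\rho^{-k}$-type equalities. I expect this to yield a contradiction unless $m\mid 2$ or $m\mid 4$.
\item If $s=\rho^k$ is a rotation, (TR) says $\rho^{2k}$ is a product of commutators, and all commutators in $\D_{2m}$ lie in $\langle\rho^2\rangle$. Relation (R1) with $\alpha_r$ a reflection gives $\alpha_r s^{-1}\alpha_r s^{-1}=s^{-1}\alpha_r s^{-1}\alpha_r$, i.e.\ $\rho^{2k}=\rho^{-2k}$; hence $\rho^{4k}=1$. Relation (R3) analysed case by case (both reflections, one reflection) gives further congruences, in the spirit of the abelian computation $\sigma^4=1$, $\sigma^2=1$ of Theorem \ref{thm:cyclic}. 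Surjectivity forces at least one reflection among the $\alpha_r,\beta_r$. Together these should force $s$ to be a central rotation of order $2$ (so $m$ even), and I expect the remaining equations to be inconsistent unless $m=4$.
\end{itemize}

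\textbf{The exceptional values.}
\begin{itemize}
\item For $m=4$, the plan is to show that $s=\rho^2$, the central involution, does occur. I will count those $\varphi$ by reducing modulo the centre, where the image becomes a map to $\C_2\times\C_2$, and identify the count as another $mJ_{2g}(m)$. This gives $\varepsilon=2$.
\item For $m=2$, $\D_4\cong \C_2\times\C_2$ is abelian. Here I count directly as in Theorem \ref{thm:cyclic}: $\varphi$ factors through the abelianization, where $\sigma$ has order $2$ and the $a_r,b_r$ map freely to $\mathbb{Z}^{2g}$. The admissible epimorphisms are then the pairs ($s\neq 1$, a tuple in $(\C_2\times\C_2)^{2g}$) that together generate. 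That gives $3$ choices of $s$ times the number of tuples generating modulo $\langle s\rangle$, namely $2^{2g}J_{2g}(2)$. Checking the factor gives $3\cdot 2^{2g}\cdot J_{2g}(2)=3\cdot2^{2g-1}\cdot 2J_{2g}(2)$, as claimed.
\end{itemize}

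\textbf{Main obstacle.} The hardest step will be the exhaustive case analysis ruling out mixed configurations: reflections among the $\alpha_r,\beta_r$, or $s$ a rotation. This requires tracking (R2) for $s<r$ as well, which couples different indices. I will handle it by first showing that the set of indices $r$ with $\alpha_r$ or $\beta_r$ a reflection interacts with $s$ uniformly via (R2), and then reducing to a single-handle computation.
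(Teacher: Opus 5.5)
\textbf{Verdict: there is a genuine gap.} Your main-term count is correct ($s$ a reflection, all $\alpha_r,\beta_r$ rotations), and so is your $m=2$ count, but your handling of the exceptional configurations is wrong.

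For $m=4$ you attribute the second $4J_{2g}(4)$ to epimorphisms with $s=\rho^2$. There are none. In fact, for every $m\geq 3$ the image of $\sigma$ is never a rotation. A central $s=\rho^{m/2}$ satisfies $s^2=1$, and then (B3), (B4) reduce to $[\alpha_i,\alpha_j]=[\beta_i,\beta_j]=[\alpha_i,\beta_j]=[\beta_i,\alpha_j]=1$ for $j<i$ and $[\alpha_i,\beta_i]=1$, so the image would be abelian.

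The extra family for $m=4$ consists of epimorphisms with $s$ a reflection, say $s=\tau$, and all $\alpha_r,\beta_r$ in the Klein four-group $C_G(\tau\rho)=\langle\tau\rho,\rho^2\rangle$, with at least one of them a reflection $\tau\rho^{\pm1}$. This group is abelian and normalized by $\tau$, so all relations hold. There are $4(4^{2g}-2^{2g})=4J_{2g}(4)$ of these. This is exactly the mixed configuration your first bullet leaves open for $m=4$, but your counting plan never counts it. Executed as written, it would count an empty set in place of the real contribution.

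Your rotation-case analysis also goes wrong in its one concrete step. If $s=\rho^k$ and $\alpha_r=\tau\rho^j$, then $s^{-1}\alpha_r s^{-1}=\alpha_r$, so (R1) is vacuous and gives no $\rho^{4k}=1$.

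The missing idea for the exclusion step is a uniform proof that $s^2=1$:
\begin{enumerate}
\item If some $\alpha_i$ or $\beta_i$ is central, (B4) gives $s^2=1$ immediately.
\item Otherwise, use that $\mathsf{D}_{2m}$ is a CCT-group: commutativity is transitive on non-central elements, since $\langle\rho\rangle$ is abelian of index $2$. By (B3), $\alpha_i$ and $\beta_i$ both commute with $s^{-1}\alpha_{i-1}s$ for $i>1$. Also $s^{-1}\alpha_j s$ and $s^{-1}\beta_j s$ both commute with $\alpha_{j+1}$ for $j<g$.
\item Since $g\geq 2$, this yields $[\alpha_i,\beta_i]=1$ for all $i$, and (TR) then gives $s^2=1$. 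Together with non-centrality, $s$ is a reflection.
\item Normalize $s=\tau$ and suppose some $\alpha_i=\tau\rho^k$ is a reflection. Relation (B2) gives $\rho^{4k}=1$. Relations (B3) and (B4) then force every $\alpha_j,\beta_j$ into $C_G(\alpha_i)=C_G(s\alpha_i s)$, which is a Klein four-group.
\item Hence the image is either $\mathsf{C}_2\times\mathsf{C}_2$, impossible for $m\geq 4$, or $\mathsf{D}_8$. This is exactly where $m=4$ and the extra family come from.
\end{enumerate}
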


Furthermore, for any admissible epimorphism $\varphi\colon \mathsf{B}_2(\Sigma_g) \to G$, with $G$ either abelian or dihedral, the order $\varphi(\sigma)$ is always $2$.

By contrast, in Section \ref{sec:_extraspecial_quotients} we provide an infinite series of extra-special $p$-groups $G$, with $p$ odd,  for which there are admissible epimorphisms $\varphi\colon \mathsf{B}_2(\Sigma_g) \to G$ such that  $\varphi(\sigma)$ belongs to $Z(G)$, and so has order $p$. Note that, for any positive integer $g\geq 2$ and any prime number $p$, there are exactly two isomorphism classes of  extra-special $p$-groups of order $p^{2g+1}$ represented by the groups  $\mathsf{H}_{2g+1}(\mathbb{F}_p)$ and $\mathsf{G}_{2g+1}(\mathbb{F}_p)$, see Proposition \ref{prop:extra-special-groups} for their definitions.

\begin{letterthm} [Proposition \ref{prop:extra-special_congruence} and Theorem \ref{thm:extraspecial}] \label{thm:C}
 Let $G$ be an extra-special group of order $p^{2g+1}$, with $p$ \emph{odd}. Then $G$ is an admissible quotient of $\B_2(\Sigma_g)$ if and only if $g \equiv -1 \pmod{p}$.  In this case, the number $N_g(G)$ of admissible epimorphisms $\varphi \colon \B_2(\Sigma_g) \to G$ is given by
  \begin{equation} 
 N_g(G) =  (p-1) \, p^{g(g+2)} \prod_{i=1}^g (p^{2i} -1).
 \end{equation}
In particular, $N_g(G)$ is independent of $G$.
\end{letterthm}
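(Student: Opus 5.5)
The plan is to work directly with Bellingeri's presentation (Theorem \ref{thm:Bellingeri}) and with the standard structure of an extra-special $p$-group $G$ of order $p^{2g+1}$, $p$ odd. We have $Z(G)=G'=\Phi(G)\cong \C_p$, and $V:=G/Z(G)\cong \mathbb{F}_p^{2g}$. The commutator map induces a nondegenerate alternating form $\omega\colon V\times V\to Z(G)$. The argument has three steps: force $\varphi(\sigma)$ into the centre, reduce all relations to conditions on $\omega$, and count.

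\emph{Step 1: $\varphi(\sigma)$ is central.} Let $\varphi\colon \B_2(\Sigma_g)\to G$ be admissible and compose with $G\to V$. Since $V$ is abelian, every relation of Theorem \ref{thm:Bellingeri} becomes additive. The surface relation, of the form $\prod_i [a_i,b_i^{-1}]=\sigma^2$ up to the exact shape in Bellingeri's presentation, then gives $2\bar\sigma=0$ in $V$. As $p$ is odd, $\bar\sigma=0$, so $z:=\varphi(\sigma)\in Z(G)$. Since $\varphi(\sigma)$ must have order $n\ge 2$, $z$ is a generator of $Z(G)$, which leaves $p-1$ choices.

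\emph{Step 2: reduction to $\omega$.} Surjectivity of $\varphi$ is equivalent to the images $\bar x_1,\dots,\bar x_{2g}$ of $\varphi(a_1),\varphi(b_1),\dots,\varphi(a_g),\varphi(b_g)$ spanning $V$, because $Z(G)=\Phi(G)$. Hence these images form a basis of $V$. With $\sigma\mapsto z$ central, I would rewrite each remaining Bellingeri relation. Relations such as $a_i\sigma^{-1}a_i\sigma^{-1}=\sigma^{-1}a_i\sigma^{-1}a_i$ become trivial. The mixed relations for $i<j$ become prescriptions $[a_i,a_j],[a_i,b_j],\dots\in\{z^{0},z^{\pm 2}\}$. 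The relation involving $a_i,b_i,\sigma$ gives $[a_i,b_i]=z^{\pm 2}$. Thus all relations except the surface one amount to prescribing the Gram matrix $\omega(\bar x_k,\bar x_l)=z^{M_{kl}}$ for an explicit integer alternating matrix $M$.

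Substituting into the surface relation gives $z^{\pm 2g}=z^{2}$, i.e. $z^{2(g+1)}=1$. Since $z$ has order $p$ and $p$ is odd, this holds if and only if $p\mid g+1$. This proves the "only if" part of Proposition \ref{prop:extra-special_congruence}. For the converse I must check that $M$ is nondegenerate mod $p$ when $g\equiv-1\pmod p$, by computing $\det M$ or exhibiting a symplectic basis change. This is the step I expect to be the main obstacle, because $M$ has entries $\pm2$ above the diagonal in a Bellingeri-specific pattern and its determinant must be a $p$-adic unit exactly under the congruence. Nondegeneracy is necessary anyway, since $\omega$ is nondegenerate and the $\bar x_k$ form a basis. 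Once it holds, existence follows.

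\emph{Step 3: counting.} Fix $z$. By Step 2, admissible $\varphi$ with $\varphi(\sigma)=z$ correspond bijectively to tuples $(x_1,\dots,x_{2g})\in G^{2g}$ whose images in $V$ form a basis with Gram matrix $z^{M}$ under $\omega$. The commutator conditions depend only on the images in $V$, so each choice of images has exactly $|Z(G)|^{2g}=p^{2g}$ lifts. Because $M$ and $\omega$ are both nondegenerate alternating forms on $\mathbb{F}_p^{2g}$, they are equivalent. Hence the bases with prescribed Gram matrix form a torsor under $\mathrm{Sp}_{2g}(\mathbb{F}_p)$, of cardinality $p^{g^2}\prod_{i=1}^g(p^{2i}-1)$. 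Multiplying the three factors gives
\[
N_g(G)=(p-1)\,p^{2g}\,p^{g^2}\prod_{i=1}^g(p^{2i}-1)=(p-1)\,p^{g(g+2)}\prod_{i=1}^g(p^{2i}-1).
\]
The only input used is the nondegenerate commutator form, which is the same for $\mathsf{H}_{2g+1}(\mathbb{F}_p)$ and $\mathsf{G}_{2g+1}(\mathbb{F}_p)$. Hence the count is independent of $G$.
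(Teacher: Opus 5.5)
Your overall route is the paper's: $\s=\varphi(\sigma)$ is central by reading (TR) in $V$, the other relations reduce to conditions on the commutator form, the congruence comes from (TR), and the count is $(p-1)\cdot p^{2g}\cdot|\mathsf{Sp}(2g,\mathbb{F}_p)|$. However, the proposal is unfinished at exactly the point on which both the ``if'' direction and Step~3 rest. You never determine the Gram matrix $M$, and without its nondegeneracy your torsor count in Step~3 could be zero. Your forecast for this step is also mistaken. $M$ has no $\pm 2$ entries outside the diagonal blocks, and its nondegeneracy has nothing to do with the congruence $g\equiv -1 \pmod p$.

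The gap closes by reading Theorem~\ref{thm:Bellingeri} with $\s$ central.
\begin{itemize}
\item The relations (B3) involve the conjugate $\s^{-1}(\cdot)\,\s$, so they become $[\a_i,\a_j]=[\a_i,\b_j]=[\b_i,\a_j]=[\b_i,\b_j]=1$ for $j<i$. Since $[u,v]^{-1}=[v,u]$, all commutators between different indices are therefore trivial.
\item (B2) becomes $[\a_i,\a_i\s^{-2}]=1$, which holds automatically.
\item (B4) becomes $[\a_i,\b_i\s^{-2}]=[\a_i,\b_i]=\s^2$.
\end{itemize}
Thus $M$ is block diagonal with $g$ blocks $\bigl(\begin{smallmatrix}0&2\\-2&0\end{smallmatrix}\bigr)$. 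Hence $\det M=2^{2g}$, which is a unit modulo every odd $p$, for every $g$.

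The congruence enters only through (TR), where $[\a_i,\b_i^{-1}]=\b_i^{-1}[\a_i,\b_i]^{-1}\b_i=\s^{-2}$. This also fixes the sign you left as $\pm$; the other sign would give $g\equiv 1$ instead.

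With $M$ determined, existence is immediate. The paper simply exhibits $(x_1,y_1,\ldots,x_g,y_g,z^{(p-1)/2})$ in the presentation of Proposition~\ref{prop:extra-special-groups}, where $[x_i,y_j]=z^{-\delta_{ij}}=(z^{(p-1)/2})^{2\delta_{ij}}$. Your Step~3 then goes through as written. Counting separately for each generator of $Z(G)$ via the $\mathsf{Sp}$-torsor, and using $Z(G)=\Phi(G)$ for surjectivity, is in fact slightly more explicit than the paper. The paper instead normalizes $\s$ by an automorphism and leaves the Frattini argument implicit.
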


We also have extra-special admissible quotients which are $2$-groups. In this situation, rather surprisingly, $\varphi(\sigma)$ has always order $4$,  instead of $2$. In fact, $\varphi(\sigma)^2$ is the unique central involution of $G$.

\begin{letterthm}[Propositions \ref{prop:extra-special_2_congruence} and \ref{prop:A(G)} and Theorem \ref{thm:2-extraspecial}] 
\label{thm:D}
Let $G$ be an extra-special group of order $2^{2g+1}$. Then $G$ is an admissible quotient of $\B_2(\Sigma_g)$ if and only if $g$ is odd.  In this case, the number $N_g(G)$ of admissible epimorphisms $\varphi \colon \B_2(\Sigma_g) \to G$ is given by
\begin{equation} 
N_g(G) =  2^{2g+1} A(G) \cdot |\mathsf{Sp}(2g, \, \mathbb{F}_2)|,
\end{equation}
where $\mathsf{Sp}(2g, \, \mathbb{F}_2)$ is the general symplectic group of $2g \times 2g$ matrices over the field $\mathbb{F}_2$ and 
\begin{equation} 
A(G)=
  \begin{cases}
  2^{2g-1} -  2^{g-1}   & \textrm{if }
    G=\mathsf{H}_{2g+1}(\mathbb{F}_2) \\
   2^{2g-1} +  2^{g-1}&
    \textrm{if }
    G=\mathsf{G}_{2g+1}(\mathbb{F}_2).
  \end{cases}
\end{equation}
\end{letterthm}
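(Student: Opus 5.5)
The plan is to reduce the count to a problem in linear algebra over $\mathbb{F}_2$, via the quotient $V:=G/Z(G)\cong \mathbb{F}_2^{2g}$. Recall two standard facts. The commutator map induces a nondegenerate alternating form $\omega$ on $V$ with values in $Z(G)=\{1,z\}\cong\mathbb{F}_2$. The squaring map induces a quadratic form $q\colon V\to\mathbb{F}_2$ polarizing to $\omega$, and $q$ is of plus type for $\mathsf{H}_{2g+1}(\mathbb{F}_2)$ and of minus type for $\mathsf{G}_{2g+1}(\mathbb{F}_2)$. Since the Frattini subgroup of $G$ is $Z(G)$, a map $\varphi$ is surjective if and only if the images of the generators span $V$.

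Write $\alpha_i=\varphi(a_i)$, $\beta_i=\varphi(b_i)$, $s=\varphi(\sigma)$. I would feed these into the relations of Bellingeri's presentation (Theorem \ref{thm:Bellingeri}) and read them in $V$ and in $Z(G)$.

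\textbf{Step 1: constraints in $V$.} In the abelianization relations, $\sigma$ becomes $2$-torsion and the $a_i,b_i$ are free. So in $V$ I expect no condition on $\bar\alpha_i,\bar\beta_i$ beyond spanning. The relations of the form $a_r\,\sigma^{-1}a_s\sigma=\sigma^{-1}a_s\sigma\, a_r$ and $a_i\sigma^{-1}a_i\sigma^{-1}=\sigma a_i\sigma a_i$, rewritten with commutators in $G$, become equations $\omega(\bar\alpha_r,\bar\alpha_s)=\ldots$ involving $\omega(\bar s,\cdot)$. I expect these to force $\omega(\bar\alpha_i,\bar\alpha_j)=\omega(\bar\beta_i,\bar\beta_j)=0$ and $\omega(\bar\alpha_i,\bar\beta_j)=\delta_{ij}$, possibly after a correction depending on $\bar s$. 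In particular $(\bar\alpha_1,\bar\beta_1,\dots,\bar\alpha_g,\bar\beta_g)$ would be a symplectic basis, giving $|\mathsf{Sp}(2g,\mathbb{F}_2)|$ choices, and $\bar s$ would then be determined, or constrained, by the remaining relations.

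\textbf{Step 2: constraints in $Z(G)$.} The surface-type relation expresses $\sigma^2$ (up to conjugation) as the product of the $g$ commutators $[a_i,b_i^{-1}]$, which maps to $z^g$. Hence $s^2=z^g$. If $s^2=1$, that is $q(\bar s)=0$, the relations involving $\sigma$ conjugation should produce a contradiction with the symplectic conditions of Step 1, for instance forcing some $\omega(\bar\alpha_i,\bar\beta_i)=0$. So admissibility forces $s^2=z$, $q(\bar s)=1$ and $g$ odd, which is Proposition \ref{prop:extra-special_2_congruence}. Conversely, for $g$ odd one exhibits an explicit assignment.

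\textbf{Step 3: counting.} Given a symplectic basis in $V$, each of the $2g$ elements $\alpha_i,\beta_i$ has $2$ lifts, giving $2^{2g}$. The element $s$ is then chosen among the elements with $q(\bar s)=1$ compatible with the relations; the count of such $\bar s$ should be $A(G)$, the number of vectors with $q=1$, namely $2^{2g-1}\mp2^{g-1}$ for plus or minus type (Proposition \ref{prop:A(G)}). Finally $s$ has $2$ lifts, and both lifts satisfy all relations since central factors cancel or appear squared. Multiplying gives $2^{2g+1}A(G)\,|\mathsf{Sp}(2g,\mathbb{F}_2)|$.

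\textbf{Main obstacle.} The hardest part is Step 1 and its interaction with $\bar s$: showing that every relation involving $\sigma$ becomes an identity once $q(\bar s)=1$ and the symplectic conditions hold, with no extra condition linking $\bar s$ to the basis. If $\bar s$ did enter, I would first change variables (replacing $\alpha_i$ by $\alpha_i s^{\pm1}$-type modifications) to decouple $\bar s$ from the basis, so that the count still factors as above.
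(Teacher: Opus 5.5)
Your framework is the right one and matches the paper: pass to $V=G/Z(G)$, use the Frattini criterion, and count symplectic bases times anisotropic $\bar{\s}$ times $2^{2g+1}$ lifts. But the decisive step, showing $\s^2=z$, is not established, and the argument you sketch for it is circular.

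The relations do not force $\omega(\bar{\a}_i,\bar{\b}_i)=1$ on their own. Since $\s^2\in Z(G)$, we have $\overline{\s^{-1}x\s^{\pm 1}}=\bar{x}$, and commutators in $G$ depend only on classes in $V$. Hence:
\begin{itemize}
\item \eqref{B2} is vacuous;
\item \eqref{B3} gives $\omega(\bar{\a}_i,\bar{\a}_j)=\omega(\bar{\b}_i,\bar{\b}_j)=\omega(\bar{\a}_i,\bar{\b}_j)=0$ for $i\neq j$;
\item \eqref{B4} gives exactly $\omega(\bar{\a}_i,\bar{\b}_i)=q(\bar{\s})$.
\end{itemize}
There are no $\omega(\bar{\s},\cdot)$ terms; they appear only inside central factors and cancel. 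So in Step 2 you cannot conclude $\s^2=z^g$: \eqref{TR} only gives $q(\bar{\s})=g\,q(\bar{\s})$. When $q(\bar{\s})=0$ there is no independent ``symplectic condition from Step 1'' to contradict. The vanishing of $\omega(\bar{\a}_i,\bar{\b}_i)$ is precisely what \eqref{B4} says, and all the relations, including \eqref{TR}, are then satisfied. The only thing that can fail is surjectivity.

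That is the missing idea, and it is how the paper argues. If $q(\bar{\s})=0$, then $\bar{\a}_1,\bar{\b}_1,\ldots,\bar{\a}_g,\bar{\b}_g$ span a totally isotropic subspace of the nondegenerate $2g$-dimensional space $V$, so its dimension is at most $g$. Adding $\bar{\s}$ gives at most $g+1<2g$ dimensions (this is where $g\geq 2$ is used), contradicting the Frattini criterion. Only after this does \eqref{B4} make $(\bar{\a}_i,\bar{\b}_i)$ a symplectic basis, and then \eqref{TR} gives $z^g=z$, so $g$ is odd.

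The same computation removes what you call the main obstacle. $\bar{\s}$ enters the relations only through $q(\bar{\s})$, so every anisotropic $\bar{\s}$ is compatible with every symplectic basis, and no change of variables is needed. This is what justifies the factor $A(G)\cdot|\mathsf{Sp}(2g,\mathbb{F}_2)|$ and the converse for $g$ odd.

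Finally, the values of $A(G)$ are part of the statement, so citing them is not enough. You need at least the short argument that $\sum_{v\in V}(-1)^{q(v)}=\pm 2^g$, with sign according to Arf invariant $0$ or $1$. Combined with $I(G)+A(G)=2^{2g}$, this yields the formula.
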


Thus, for every extra-special admissible quotient $\varphi \colon \B_2(\Sigma_g) \to G$ with $|G|=p^{2g+1}$, the image $\varphi(\sigma)$ is never an involution of $G$. This is a striking difference with the abelian and dihedral case. 

The proof of both Theorems C and D strongly relies on the natural symplectic structure of the elementary abelian quotient $G/Z(G) \simeq (\mathbb{F}_p)^{2g}$. Additionally, when $p=2$, an important role is played by the Arf invariant of the quadratic form $q \colon V \to \mathbb{F}_2$.  

In each case, we also provide the number $N_g^{\circ}(G) = N_g(G)/|\Aut(G)|$ of admissible epimorphisms up to automorphisms of $G$, see Remark \ref{rmk:action_auto} and Corollaries \ref{cor:cyclic_up_to_auto}, \ref{cor:dihedral_up_to_auto}, \ref{cor:extraspecial_auto}, \ref{cor:2-extraspecial_auto}. An interesting dichotomy emerges in the case of extra-special admissible quotients. For odd $p$, the quantity $N_g(G)$ is independent of $G$, whereas $N_g^\circ(G)$ depends on it, and so distinguishes the Heisenberg and non-Heisenberg types. For $p=2$, the opposite phenomenon occurs, namely $N_g(G)$ depends on the Arf invariant of $q$, and so on $G$, while $N_g^\circ(G)$ is independent of $G$.
\medskip

Finally, in Section \ref{sec:H_1} we explain how to compute the first homology group $H_1(S, \, \mathbb{Z})$ of the $G$-covers $S$ as in \eqref{G-cover}. As an application, with the help of the Computer Algebra System \verb|GAP4|, we construct two $3$-dimensional families of minimal surfaces of general type with $p_g=7$, $q=4$ and $K^2=32$ such that members of different families have the same biregular invariants and the same Betti numbers, but different torsion part in $H_1(S, \, \mathbb{Z})$, see Proposition \ref{prop:invariants_a_b_c} and Remark \ref{rem:invariants_a_b_c}.

We remark that, in a similar spirit to the results of the present paper, a series of recent works \cite{CaPol21, Pol22, PolSab22, PolSab23, PolSab25, PolSab26} has shed light on the role of admissible quotients of the pure braid group $\mathsf{P}_2(\Sigma_g)$ in algebraic geometry, and in particular in the theory of double Kodaira surfaces; see \cite{Rol10, Cat17, LLR20} for general background on this last topic. 

\subsection*{Notation and conventions}

If $S$ is a smooth projective surface over $\mathbb{C}$, then $c_1(S)$, $c_2(S)$ denote the first and second Chern class of its tangent bundle $T_S$, respectively. Moreover, $K_S$ is the canonical class, $p_g(S)=h^0(S, \, K_S)$ is the geometric genus, $q(S)=h^1(S, \, K_S)$ is the irregularity and $\chi(\mathcal{O}_S)$ is the holomorphic Euler characteristic.

If $X$ is a topological space, we denote by $\chi_{\textrm{top}}(X)$ its topological Euler characteristic.

Throughout the paper we use the following notation for
groups:
\begin{itemize}
  \item $\C_m$ is the cyclic group of order $m$, $\D_{2m}$ is the dihedral group of order $2m$ and $\mathsf{Q}_8$ is the quaternion group of order $8$.
  
  \item $\mathsf{Sp}(2n, \, \mathbb{F}_q)$ is the general symplectic group of $2n \times 2n$ matrices over a field with $q$ elements. 
  
  \item The order of a finite group $G$ is denoted by $|G|$. If $x \in G$, the order of $x$ is denoted by $o(x)$ and its centralizer in $G$ by $C_G(x)$.
  
  \item $\operatorname{Aut}(G)$ is the automorphism group of $G$.
  
  \item If $x, \, y \in G$, their commutator is defined as $[x,\, y]=xyx^{-1}y^{-1}$.
  
  \item The commutator subgroup of $G$ is denoted by $[G, \, G]$, the center of $G$ by $Z(G)$.
  
  \item If $S= \{s_1, \ldots, s_n \} \subset G$, the subgroup generated by $S$ is denoted by $\langle S \rangle=\langle s_1,\ldots, s_n \rangle$.
  
  \item If $N$ is a normal subgroup of $G$ and $g \in G$, we denote by $\bar{g}$ the image of $g$ in the quotient group $G/N$.
\end{itemize}

\section{Number-theoretic preliminaries}

Let $\mu \colon \mathbb{N}\rightarrow\mathbb{C}$ be the \emph{M\"obius function} defined as 
\begin{equation} \label{eq:mobius_function}
\mu(n)=
\begin{cases}
1 & \text{if } n=1,\\ 
(-1)^l & \text{if }n \text{ is the product of }l\text{ distinct primes},\\ 
0 & \text{if } n \text{ is divisible by the square of a prime}.
\end{cases}
\end{equation}
We denote by $\C_m$ the cyclic group of order $m$, presented as $\C_m = \langle x \; | \; x^m=1 \rangle$.

\begin{proposition} \label{prop:Jordan_totient_1}
For any  $k \in \mathbb{N}$, the number $J_k(m)$ of ordered $k$-tuples $(x_1,\dots, x _k) \in (\C_m)^k$ generating $\C_m$ is given by 
\begin{equation} \label{eq:Jordan_totient_1}
J_k(m)=\sum_{d\mid m}d^k\mu\left(\frac{m}{d}\right).
\end{equation}
\end{proposition}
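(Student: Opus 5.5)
The plan is to use Möbius inversion over the lattice of subgroups of $\C_m$. Since every subgroup of $\C_m$ is cyclic and there is exactly one subgroup of each order $d \mid m$ (namely $\C_d = \langle x^{m/d}\rangle$), the subgroup lattice is in order-preserving bijection with the divisor lattice of $m$. This reduces the count to a divisor-sum identity.

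\textbf{Step 1: partition the $k$-tuples.} Every $k$-tuple $(x_1,\dots,x_k)\in(\C_m)^k$ generates some subgroup $\langle x_1,\dots,x_k\rangle$, which equals the unique subgroup $\C_d$ of some order $d\mid m$. Partitioning $(\C_m)^k$ according to this subgroup, the tuples generating exactly $\C_d$ are precisely the generating $k$-tuples of $\C_d \cong \C_d$, and there are $J_k(d)$ of them by definition. Therefore
\begin{equation*}
m^k=\sum_{d\mid m} J_k(d).
\end{equation*}
The same identity holds with $m$ replaced by any $n$, because the argument only used that $\C_n$ is cyclic.

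\textbf{Step 2: invert.} We now have $F(n)=\sum_{d\mid n}J_k(d)$ with $F(n)=n^k$ for all $n\ge 1$. Möbius inversion gives
\begin{equation*}
J_k(m)=\sum_{d\mid m}\mu\!\left(\frac{m}{d}\right)F(d)=\sum_{d\mid m}d^k\mu\!\left(\frac{m}{d}\right),
\end{equation*}
which is \eqref{eq:Jordan_totient_1}. If one prefers not to cite inversion, it follows from the standard identity $\sum_{e\mid n}\mu(e)=[n=1]$, obtained by expanding the double sum.

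\textbf{Alternative route and main difficulty.} One could instead count directly by inclusion–exclusion over the maximal subgroups $\C_{m/p}$, one for each prime $p\mid m$. A tuple generates $\C_m$ if and only if it does not lie in $(\C_{m/p})^k$ for any $p$. Intersecting these subgroups for a squarefree set of primes with product $e$ gives $\C_{m/e}$, which contributes $(m/e)^k$ with sign $\mu(e)$. This yields the same formula.

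There is no real obstacle here. The only point requiring care is the claim that the tuples generating exactly $\C_d$ are counted by $J_k(d)$, which needs the uniqueness of the subgroup of each order in a cyclic group.
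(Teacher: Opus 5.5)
Your proof is correct and is essentially the paper's argument. You partition $(\C_m)^k$ by the generated subgroup, using the uniqueness of the subgroup of each order $d\mid m$, to get $m^k=\sum_{d\mid m}J_k(d)$, and then apply M\"obius inversion. Your remark that this identity holds for every $n$, as inversion requires, and your inclusion--exclusion variant over the maximal subgroups $\C_{m/p}$ are both valid additions.
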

\begin{proof}
The group $\C_m$ contains precisely one subgroup isomorphic to $C_d$ for every integer $d$ dividing $m$. So we can partition the $m^k$ ordered $k$-tuples with elements in $\C_m$ according with the subgroup that they generate. By definition, the number of ordered $k$-tuples that generate $\C_d$ is $J_k(d)$, so we get
\begin{equation} \label{eq:partitioning_m^k}
m^k = \sum_{d\mid m} J_k(d).
\end{equation}
Now, \eqref{eq:Jordan_totient_1} is obtained from \eqref{eq:partitioning_m^k} by applying M\"{o}bius inversion, see \cite[Theorem 6.14]{Nat00}.
\end{proof}

\begin{proposition} \label{prop:Jordan_totient_2}
The quantity $J_k(m)$ can be also written as 
\begin{equation} \label{eq:Jordan_totient_2}
J_k(m)=m^k\prod_{p\mid m}\left(1-\frac{1}{p^k}\right),
\end{equation}
where the product ranges over the \emph{distinct} primes $p$ dividing $m$. 
\end{proposition}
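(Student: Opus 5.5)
The plan is to start from the Möbius-inversion formula \eqref{eq:Jordan_totient_1} of Proposition \ref{prop:Jordan_totient_1} and turn the sum over divisors into an Euler product, using multiplicativity. First, observe that only squarefree values of $m/d$ contribute, since $\mu$ vanishes elsewhere. Writing $m/d = e$, we get
\begin{equation}
J_k(m)=\sum_{e\mid m}\mu(e)\left(\frac{m}{e}\right)^k = m^k\sum_{e\mid m}\frac{\mu(e)}{e^k}.
\end{equation}
So it suffices to show that $\sum_{e\mid m}\mu(e)e^{-k}=\prod_{p\mid m}(1-p^{-k})$.

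Let $p_1,\dots,p_r$ be the distinct primes dividing $m$. The squarefree divisors $e$ of $m$ are exactly the products $\prod_{i\in I}p_i$ over subsets $I\subseteq\{1,\dots,r\}$, and for such $e$ we have $\mu(e)=(-1)^{|I|}$ by \eqref{eq:mobius_function}. Hence
\begin{equation}
\sum_{e\mid m}\frac{\mu(e)}{e^k}=\sum_{I\subseteq\{1,\dots,r\}}\prod_{i\in I}\left(-\frac{1}{p_i^k}\right)=\prod_{i=1}^r\left(1-\frac{1}{p_i^k}\right),
\end{equation}
where the last equality comes from expanding the product. Substituting this back gives \eqref{eq:Jordan_totient_2}.

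There is no serious obstacle; the only point needing care is that the reindexing $d\mapsto m/d$ is a bijection on the divisors of $m$. If $m=1$, the product is empty and both sides equal $1$.

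As an alternative check, one could argue directly: a $k$-tuple generates $\C_m$ if and only if it does not lie entirely in any maximal subgroup $\C_{m/p}$. Inclusion–exclusion over the primes $p\mid m$ then gives the same product, because the intersection of the subgroups $\C_{m/p}$ for $p\in P$ is $\C_{m/\prod_{p\in P}p}$.
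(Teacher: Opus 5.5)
Your proof is correct and follows the same route as the paper: reindex by $t=m/d$ to get $J_k(m)=m^k\sum_{t\mid m}\mu(t)t^{-k}$, then expand over the squarefree divisors, indexed by subsets of the prime divisors of $m$, and recognize the expanded product $\prod_{p\mid m}(1-p^{-k})$. The paper takes a small detour: it subtracts $\sum_{t\mid m}\mu(t)=0$, which needs $m\geq 2$, and then recovers the constant term $1$ via $\sum_{i=1}^s(-1)^{i+1}\binom{s}{i}=1$. Your direct subset expansion avoids this detour and also covers $m=1$.
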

\begin{proof}
Setting $t= m/d$, we have 
\begin{align*}
J_k(m) &=\sum_{d\mid m}\left(\frac{d}{m}\right)^k\left(\frac{m}{d}\right)^k d^k\mu\left(\frac{m}{d}\right)\\
&=\sum_{d\mid m}m^k\left(\frac{d}{m}\right)^k\mu\left(\frac{m}{d}\right) =m^k\sum_{t\mid m}\left(\frac{1}{t^k}\right)\mu(t).
\end{align*}
Thus,  in order to verify \eqref{eq:Jordan_totient_2}, it suffices to show that 
\begin{equation} \label{eq:to_prove_Jordan}
\sum_{t\mid m}\left(\frac{1}{t^k}\right) \mu(t)=\prod_{p\mid m}\left(1-\frac{1}{p^k}\right).
\end{equation}
Recalling that $\sum_{t\mid m}\mu(t)=0$ when $m\geq 2$, see \cite[Theorem 6.13]{Nat00}, we obtain
\begin{equation} \label{eq:rewrite_Jordan}
\begin{split}
\sum_{t\mid m}\left(\frac{1}{t^k}\right)\mu(t) 
&=\sum_{t\mid m}\left(\frac{1}{t^k}\right)\mu(t)-\sum_{t\mid m}\mu(t)\\
&=\sum_{t\mid m}\left(\frac{1}{t^k}-1\right)\mu(t).
\end{split}
\end{equation}
If $m=p_1^{r_1}\dots p_s^{r_s}$ is the factorization of $m$ in the product of powers of distinct primes, the divisors $t$ of $m$ such that $\mu(t)\neq 0$ are 
\begin{equation}
1,\; \; p_i, \; p_i p_j, \; \; p_i p_j p_l, \dots
\end{equation}
namely, $1$ and all the products of $s'$ distinct primes in the set $\{p_1, \ldots, p_s\}$, with $s'\leq s$. 

Therefore, recalling the definition of $\mu(t)$, we can rewrite the last sum in \eqref{eq:rewrite_Jordan} as 
\begin{equation}
\begin{split}
\sum_{t\mid m}\left(\frac{1}{t^k}-1\right)\mu(t)
=&\sum_i\left(1-\frac{1}{p_i^k}\right)+\sum_{i,  \, j}\left(\frac{1}{p_i^k p_j^k}-1\right)+\sum_{i,\,j,\,l}\left(1-\frac{1}{p_i^k p_j^k p_l^k}\right)+\ldots\\
= & \sum_{i=1}^s (-1)^{i+1 }\binom{s}{i}-\sum_i \left(\frac{1}{p_i^k}\right) +\sum_{i, \, j}\left(\frac{1}{p_i^k p_j^k}\right) 
 -\sum_{i,\, j, \, l}\left(\frac{1}{p_i^k p_j^k p_l^k}\right)+\ldots \\
= & 1-\sum_i\left(\frac{1}{p_i^k}\right)+\sum_{i,\, j}\left(\frac{1}{p_i^k p_j^k}\right)-\sum_{i,\, j, \, l}\left(\frac{1}{p_i^k p_j^k p_l^k}\right)+\ldots\\
= & \prod_{p\mid m}\left(1-\frac{1}{p^k}\right).
\end{split}
\end{equation}
\end{proof}

\begin{remark} \label{rmk:Jordan_totient}
The quantity $J_k(m)$ is called the \emph{Jordan totient}, see \cite[Exercise 1.5.2]{Mur01}. Comparing \eqref{eq:Jordan_totient_2} with the formula
\begin{equation} \label{eq:Euler_totient}
\phi(m)=m \prod_{p\mid m}\left(1-\frac{1}{p}\right)
\end{equation}
expressing the classical Euler totient, see \cite[Theorem 2.7]{Nat00}, we observe that $\phi(m)=J_1(m)$.
\end{remark}

\begin{remark}
Since $\operatorname{Aut}(\C_m)$ has order $\phi(m)$ and acts freely on generating ordered $k$-tuples, the number of such $k$-tuples, up to automorphisms, is
\begin{equation} \label{eq:k-tuples up to auto}
\frac{1}{\phi(m)} J_k(m)= m^{k-1} \prod_{p\mid m} \frac{1-p^{-k}}{1-p^{-1}}.
\end{equation}
\end{remark}

\section{Braid groups on compact Riemann surfaces}\label{sec:braids}

\begin{definition}
Let $\Sigma_g$ be a compact Riemann surface of genus $g$, and let  $\mathscr{P} = \{p_1, \,p_2\} \subset \Sigma_g$ be a pair of distinct points. A \emph{ braid} on $\Sigma_g$ based at $\mathscr{P}$ (also called a \emph{ braid on} $2$ \emph{strings}) is a pair $(\gamma_1, \, \gamma_2)$ of paths $\gamma_i \colon [0, \, 1] \to \Sigma_g$ such that 
\begin{itemize}
\item $\gamma_i(0) = p_i, \quad i=1, \, 2$;
\item $\gamma_i(1) \in \mathscr{P}, \quad i=1, \, 2$;
\item the points $\gamma_1(t), \, \gamma_2(t) \in \Sigma_g$ are distinct for all $t \in [0, \, 1]$.
\end{itemize}     
A braid such that $\gamma_i(0)=\gamma_i(1)$ for all $i \in \{1, \, 2\}$ is called a \emph{pure braid}. 
\end{definition}
\begin{figure}[htbp]
\centering

\begin{subfigure}[t]{0.30\textwidth}
\centering
\begin{tikzpicture}
\braid[
  number of strands=2,
  height=1.6cm,
  width=1cm,
  border height=0.2cm,
  style strands={1}{blue},
  style strands={2}{red},
  line width=1.6pt
] (braidleft) a_1;
\end{tikzpicture}
\caption*{non-pure braid}
\end{subfigure}
\hspace{1.5cm}
\begin{subfigure}[t]{0.30\textwidth}
\centering
\begin{tikzpicture}
\braid[
  number of strands=2,
  height=0.8cm,
  width=1cm,
  border height=0.2cm,
  style strands={1}{blue},
  style strands={2}{red},
  line width=1.6pt
] (braidright) a_1 a_1;
\end{tikzpicture}
\caption*{pure braid}
\end{subfigure}

\caption{Examples of a non-pure braid and a pure braid on two strings.}
\label{fig:braids}
\end{figure}

The \emph{Artin braid group} (or  \emph{full braid group}) on two strings on $\Sigma_g$ is the group $\mathsf{B}_{2}(\Sigma_g)$ whose elements are the homotopy classes of braids  based at $\mathscr{P}$ and whose operation is the usual product of paths, up to homotopies among braids. The \emph{pure braid group} is the subgroup $\mathsf{P}_{2}(\Sigma_g)$ of $\mathsf{B}_{2}(\Sigma_g)$ given by the homotopy classes of pure braids. It can be shown that $\mathsf{B}_{2}(\Sigma_g)$ and $\mathsf{P}_{2}(\Sigma_g)$ do not depend on the choice of the set $\mathscr{P}$, and that there is a short exact sequence of groups
\begin{equation} \label{eq:pure-nonpure}
1 \to \mathsf{P}_{2}(\Sigma_g)\to \mathsf{B}_{2}(\Sigma_g)\to \C_2 \to 1.
\end{equation}
Furthermore, there are isomorphisms
\begin{equation} \label{eq:iso-braids}
	\mathsf{P}_{2}(\Sigma_g) \simeq \pi_1(\Sigma_g \times \Sigma_g
		- \Delta,
	\, \mathscr{P}), \quad
	\mathsf{B}_{2}(\Sigma_g) \simeq \pi_1(\Sym^2 \Sigma_g
		- \delta,
	\, \mathscr{P}),
\end{equation}
where $\Delta \subset \Sigma_g \times \Sigma_g$ and $\delta \subset \Sym^2 \Sigma_g$ are the diagonals. 
Thus, we can interpret \eqref{eq:pure-nonpure} as the short exact sequence of fundamental groups induced by the $\C_2$-covering 
\begin{equation}
\Sigma_g \times \Sigma_g - \Delta \to \operatorname{Sym}_2 \Sigma_g - \delta
\end{equation}
associated with the involution that exchanges the two coordinates in $\Sigma_g \times \Sigma_g - \Delta $.

In the sequel, we will only deal with $\mathsf{B}_2(\Sigma_g)$, always assuming $g \geq 2$. It is known that $\mathsf{B}_2(\Sigma_g)$ is finitely presented for all $g$, see for instance \cite{Scott70, GM01}; here we use the following presentation, which can be found in \cite[Theorem 1.2]{Bel04}.

\begin{theorem} \label{thm:Bellingeri}
The full braid group $\B_2(\Sigma_g)$ is generated by $2g+1$ elements 
\begin{equation}
a_1,b_1,\dots,a_g,b_g,\sigma
\end{equation}
 subject to the relations 
\begin{equation} 
\begin{aligned}[c]
[a_i, \; \sigma^{-1}a_i\sigma^{-1}]&=1\\
[b_i, \; \sigma^{-1}b_i\sigma^{-1}]&=1
\end{aligned}
\qquad\qquad
\begin{aligned}[c]
1\leq i\leq g
\end{aligned}
\end{equation}
\begin{equation}
\begin{aligned}[c]
[a_i, \; \sigma^{-1}a_j\sigma]&=1\qquad\\
[a_i, \; \sigma^{-1}b_j\sigma]&=1\\
[b_i, \; \sigma^{-1}a_j\sigma]&=1\\
[b_i, \; \sigma^{-1}b_j\sigma]&=1
\end{aligned}
\qquad\quad
\begin{aligned}[c]
j<i
\end{aligned}
\end{equation}
\begin{equation}
\begin{aligned}[c]
[a_r, \; \sigma^{-1}b_r \sigma^{-1}]&=\sigma^2
\end{aligned}
\qquad\;\;\:
\begin{aligned}[c]
1\leq r\leq g
\end{aligned}
\end{equation}
\begin{equation}
\begin{aligned}[c]
[a_1, \; b_1^{-1}]\cdots [a_g, \; b_g^{-1}]=\sigma^2
\end{aligned}
\qquad\qquad\qquad\quad
\begin{aligned}[c]
\,
\end{aligned}
\end{equation}
\end{theorem}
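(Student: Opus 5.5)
The presentation in Theorem \ref{thm:Bellingeri} is the one proved in \cite[Theorem 1.2]{Bel04}, and in the paper we would simply invoke that reference. If we wanted a self-contained derivation, the plan is to build a presentation of $\mathsf{P}_2(\Sigma_g)$ first, extend it to $\B_2(\Sigma_g)$ along \eqref{eq:pure-nonpure}, and then simplify by Tietze transformations.

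\emph{Step 1: the pure braid group.} Forgetting the second point gives the Fadell--Neuwirth fibration $\Sigma_g \times \Sigma_g - \Delta \to \Sigma_g$, whose fibre is $\Sigma_g - \{p_1\}$. Since $g\geq 2$, the higher homotopy of the base vanishes, so we get a short exact sequence
\begin{equation}
1 \to \pi_1(\Sigma_g - \{p_1\}, \, p_2) \to \mathsf{P}_2(\Sigma_g) \to \pi_1(\Sigma_g, \, p_1) \to 1 .
\end{equation}
The kernel is free of rank $2g$. We would generate it by loops $a_{i,2}, \, b_{i,2}$ of the second strand. The loop $z$ of $p_2$ around $p_1$ satisfies $\prod_i [a_{i,2}, \, b_{i,2}^{-1}] = z$, up to the chosen conventions.

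\emph{Step 2: presentation of $\mathsf{P}_2(\Sigma_g)$.} We lift the standard generators of $\pi_1(\Sigma_g)$ to first-strand braids $a_{i,1}, \, b_{i,1}$, chosen to run in a disc region avoiding $p_2$. The standard recipe for extensions then gives the relations:
\begin{itemize}
\item the surface relation for the first strand, lifted, which equals a word in the kernel (essentially $z^{\pm 1}$);
\item the conjugation relations $a_{i,1} x a_{i,1}^{-1} = w(x)$ and $b_{i,1} x b_{i,1}^{-1} = w'(x)$ for every kernel generator $x$.
\end{itemize}
These conjugation words are computed geometrically. When the supports are disjoint ($j\neq i$, suitably ordered), the two elements commute. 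When $j=i$, conjugation produces a twist involving $z$.

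\emph{Step 3: passing to $\B_2(\Sigma_g)$.} We adjoin the half-twist $\sigma$, which swaps $p_1$ and $p_2$ and maps to the generator of $\C_2$. In $\mathsf{P}_2(\Sigma_g)$ we have $\sigma^2 = z$. Moreover, conjugation by $\sigma$ exchanges the two strands, so $a_{i,2} = \sigma^{-1} a_{i,1} \sigma$ up to sign conventions, and similarly for $b_{i,2}$. The extension presentation has as generators the $\mathsf{P}_2$-generators together with $\sigma$. Its relations are those of $\mathsf{P}_2$, the relation $\sigma^2 = z$, and the $\sigma$-conjugates of the $\mathsf{P}_2$-generators.

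\emph{Step 4: Tietze reduction.} We eliminate $a_{i,2}, \, b_{i,2}$ and $z$ using the identities of Step 3, which leaves the generators $a_i = a_{i,1}$, $b_i = b_{i,1}$ and $\sigma$. The relations then translate as follows.
\begin{itemize}
\item The disjoint-support commutations become $[a_i, \, \sigma^{-1} a_j \sigma] = 1$ and its analogues for $j<i$.
\item The $j=i$ twist relations become $[a_r, \, \sigma^{-1} b_r \sigma^{-1}] = \sigma^2$.
\item The surface relation becomes $\prod_i [a_i, \, b_i^{-1}] = \sigma^2$.
\item The relations $[a_i, \, \sigma^{-1} a_i \sigma^{-1}] = 1$ encode that $a_{i,1}$ and $\sigma^{-1} a_{i,1} \sigma^{-1}$ are realised by disjoint loops.
\end{itemize}
Finally, we check that the remaining relations, namely the $\sigma$-conjugates and the second-strand surface relation, follow from those listed.

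\emph{Main obstacle.} The hard part is Steps 2 and 4 together: fixing consistent orientation and base-path conventions, computing the conjugation words exactly, and verifying that the redundant relations are consequences of the listed ones. A sign or ordering slip there changes $\sigma^2$ into $\sigma^{-2}$ or alters the $j<i$ ranges. We would control this by drawing the generators explicitly on a fundamental $4g$-gon, as in \cite{Bel04}, with $p_1$ and $p_2$ placed in a small disc.
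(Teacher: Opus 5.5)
Your proposal agrees with the paper. The paper gives no argument of its own for this statement: it simply quotes Bellingeri's presentation \cite[Theorem 1.2]{Bel04} specialized to two strings, which is exactly what you say you would do. The self-contained route you sketch (Fadell--Neuwirth sequence for $\mathsf{P}_2(\Sigma_g)$, extension by $\mathsf{C}_2$ via $\sigma$, then Tietze reduction) goes beyond the paper, and as you acknowledge, its convention-dependent conjugation computations and redundancy checks would still have to be carried out before it counts as a proof.
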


These generators are depicted in Figure \ref{fig:gens_B2}. Here, the $a_i$'s and the $b_j$'s are pure braids coming from the representation of the topological surface $\Sigma_g$ as a polygon of $4g$ sides with the standard identification of the edges, whereas $\sigma$ is a non-pure braid exchanging the two points $p_1$, $p_2 \in \Sigma_g$. Note that, both in the cases of $a_i$ and $b_j$, the only nontrivial string is the first one, which goes through the wall $\alpha_i$, respectively the wall $\beta_j$.
\begin{figure}[H]
\centering
\begin{tikzpicture}[
  scale=0.5,
  >=latex,
  line cap=round,
  line join=round,
  boundary/.style={thick,dashed},
  edge/.style={thick},
  looparc/.style={thick,blue,->},
  vertex/.style={circle,fill,inner sep=1.2pt},
  puncture/.style={circle,fill=red,inner sep=1.2pt}
]

\begin{scope}[shift={(-5.2,2.6)}]

  \coordinate (p1) at (-1.25,0);
  \coordinate (p2) at ( 1.25,0);

  \coordinate (A1) at ({ 2+sqrt(3)},1);
  \coordinate (A2) at ({ 1+sqrt(3)},{1+sqrt(3)});
  \coordinate (A3) at ({ 1+sqrt(3)/2},{1.5+sqrt(3)});
  \coordinate (A4) at (1,{2+sqrt(3)});
  \coordinate (A5) at (0,{2+sqrt(3)});
  \coordinate (A6) at (-1,{2+sqrt(3)});
  \coordinate (A7) at ({-1-sqrt(3)/2},{1.5+sqrt(3)});
  \coordinate (A8) at ({-1-sqrt(3)},{1+sqrt(3)});
  \coordinate (A9) at ({-1.5-sqrt(3)},{1+sqrt(3)/2});
  \coordinate (A10) at ({-2-sqrt(3)},1);
  \coordinate (A11) at ({-2-sqrt(3)},-1);

  \draw[boundary] (A1) -- (A2);
  \draw[edge,->] (A2) -- (A3) -- (A4) -- (A5);
  \draw[edge]    (A5) -- (A6) -- (A7);
  \draw[edge,<-] (A7) -- (A8) -- (A9) -- (A10);
  \draw[boundary] (A10) -- (A11);

  \foreach \X in {A2,A4,A6,A8,A10}
    \node[vertex] at (\X) {};

  \node[vertex]   at (p1) {};
  \node[puncture] at (p2) {};
  \draw[looparc] (p1) -- (A3);
  \draw[looparc] (A7) -- (p1);

  \node[below] at (p1) {$p_1$};
  \node[below] at (p2) {$p_2$};
  \node[below] at (0,-1) {$a_i$};

  \node[above right] at (A3) {$\alpha_i$};
  \node[above]       at ($(A5)!0.5!(A6)$) {$\beta_i$};
  \node[above left]  at (A7) {$\alpha_i$};
  \node[above left]  at (A9) {$\beta_i$};

\end{scope}

\begin{scope}[shift={(5.2,5.2)}]

  \coordinate (q1) at (-1.25,0);
  \coordinate (q2) at ( 1.25,0);

  \coordinate (B1)  at ({-2-sqrt(3)},1);
  \coordinate (B2)  at ({-2-sqrt(3)},-1);
  \coordinate (B3)  at ({-1.5-sqrt(3)},{-1-sqrt(3)/2});
  \coordinate (B4)  at ({-1-sqrt(3)},{-1-sqrt(3)});
  \coordinate (B5)  at ({-1-sqrt(3)/2},{-1.5-sqrt(3)});
  \coordinate (B6)  at (-1,{-2-sqrt(3)});
  \coordinate (B7)  at (0,{-2-sqrt(3)});
  \coordinate (B8)  at (1,{-2-sqrt(3)});
  \coordinate (B9)  at ({1+sqrt(3)/2},{-1.5-sqrt(3)});
  \coordinate (B10) at ({1+sqrt(3)},{-1-sqrt(3)});
  \coordinate (B11) at ({2+sqrt(3)},-1);

  \draw[boundary] (B1) -- (B2);
  \draw[edge,->] (B2) -- (B3) -- (B4) -- (B5);
  \draw[edge]    (B5) -- (B6) -- (B7) -- (B8) -- (B9);
  \draw[edge,<-] (B9) -- (B10);
  \draw[boundary] (B10) -- (B11);

  \foreach \X in {B2,B4,B6,B8,B10}
    \node[vertex] at (\X) {};

  \node[vertex]   at (q1) {};
  \node[puncture] at (q2) {};
  \draw[looparc] (q1) -- (B5);
  \draw[looparc] (B9) -- (q1);

  \node[above] at (q1) {$p_1$};
  \node[above] at (q2) {$p_2$};
  \node[above] at (0,1) {$b_j$};

  \node[below left]  at (B3) {$\alpha_j$};
  \node[below left]  at (B5) {$\beta_j$};
  \node[below]       at (B7) {$\alpha_j$};
  \node[below right] at (B9) {$\beta_j$};

\end{scope}

\begin{scope}[shift={(0,-2.8)}, scale=0.92]

  \coordinate (s1) at (-1.25,0);
  \coordinate (s2) at ( 1.25,0);

  \coordinate (S1)  at ({ 2+sqrt(3)},1);
  \coordinate (S2)  at ({ 1+sqrt(3)},{1+sqrt(3)});
  \coordinate (S3)  at (1,{2+sqrt(3)});
  \coordinate (S4)  at (-1,{2+sqrt(3)});
  \coordinate (S5)  at ({-1-sqrt(3)},{1+sqrt(3)});
  \coordinate (S6)  at ({-2-sqrt(3)},1);
  \coordinate (S7)  at ({-2-sqrt(3)},-1);
  \coordinate (S8)  at ({-1-sqrt(3)},{-1-sqrt(3)});
  \coordinate (S9)  at (-1,{-2-sqrt(3)});
  \coordinate (S10) at (1,{-2-sqrt(3)});
  \coordinate (S11) at ({1+sqrt(3)},{-1-sqrt(3)});
  \coordinate (S12) at ({2+sqrt(3)},-1);

  \draw[boundary]
    (S1) -- (S2) -- (S3) -- (S4) -- (S5) -- (S6) --
    (S7) -- (S8) -- (S9) -- (S10) -- (S11) -- (S12) -- cycle;

  \node[vertex] at (s1) {};
  \node[vertex] at (s2) {};

  \draw[thick,blue,->] (s1) to[out=90,in=90] (s2);
  \draw[thick,red,->]  (s2) to[out=270,in=270] (s1);

  \node[left]  at (s1) {$p_1$};
  \node[right] at (s2) {$p_2$};
  \node[above right] at (0,1) {$\sigma$};

\end{scope}

\end{tikzpicture}

\caption{Generators of $\mathsf{B}_2(\Sigma_g)$}\label{fig:gens_B2}
\end{figure}

\begin{remark} \label{rmk:meaning_sigma}
In terms of the isomorphism on the right in \eqref{eq:iso-braids}, the generator $\sigma$ corresponds to the homotopy class in $\mathrm{Sym}^2 \Sigma_g-\delta$ of a topological loop in $\mathrm{Sym^2}\Sigma_g$ that ``winds once around the diagonal $\delta$".
\end{remark}

\section{Admissible epimorphisms and their geometric interpretation: branched Galois covers of  $\operatorname{Sym^2} \Sigma_g$}
\label{sec:admissible_epi_Galois_covers}

\subsection{Admissible epimorphisms} \label{subsec:admissible_epimorphisms}

\begin{definition} \label{def:admissible_epimorphism}
Let $g, \, n \geq 2$ be integers, and let $G$ be a finite group. A group epimorphism
\begin{equation} \label{eq:admissible_epimorphism}
\varphi \colon \B_2(\Sigma_g) \to G
\end{equation}
is called an \emph{admissible epimorphism} of type $(g, \, n)$ if $\varphi(\sigma)$ has order $n$.  In this case,  $G$ will be called an \emph{admissible $($full$)$ braid quotient of type} $(g, \, n)$. 
\end{definition}

\begin{remark} \label{rmk:admissible_epimorphism}
The inclusion map  $\iota \colon \operatorname{Sym}^2  \Sigma_g - \delta \to  \operatorname{Sym}^2  \Sigma_g$ induces a group epimorphism
\begin{equation}
\iota_* \colon \B_2(\Sigma_g) \to \pi_1(\operatorname{Sym}^2 \Sigma_g, \, \mathscr{P}),
\end{equation}
whose kernel is the normal closure of the subgroup generated by $\sigma$. Thus, a group epimorphism $\varphi \colon \B_2(\Sigma_g) \to G$ is admissible if and only if $\varphi$ does not factor through $\pi_1(\operatorname{Sym}^2 \Sigma_g, \, \mathscr{P})$. 
\end{remark}

Using Remark \ref{rmk:meaning_sigma}, together with Grauert's Remmert extension theorem and Serre's GAGA, we see that the existence of an admissible group epimorphism $\varphi \colon \B_2(\Sigma_g) \to G$ of type $(g, \, n)$  is equivalent to the existence of a Galois cover
\begin{equation}
f \colon S \to \operatorname{Sym}^2 \Sigma_g,
\end{equation}
with Galois group $G$ and branched precisely over $\delta$ with branching order $n$, see \cite[Section 1]{Pol18}. Note that $S$ is a smooth projective surface, since $\operatorname{Sym}^2 \Sigma_g$ is so and $\delta \subset \operatorname{Sym}^2 \Sigma_g$ is a smooth divisor.
\begin{remark}
 The invariants of $\operatorname{Sym}^2 \Sigma_g$ are 
\begin{equation} \label{eq:invariants_Sym}
\begin{split}
\chi_{\mathrm{top}}(\Sym^2\Sigma_g) &=(g-1)(2g-3) \\
K_{\Sym^2\Sigma_g}^2 & =(g-1)(4g-9) \\
\chi(\mathcal{O}_{\Sym^2\Sigma_g}) &=\frac{(g-1)(g-2)}{2}\\
p_g(\Sym^2\Sigma_g)&=\frac{g(g-1)}{2} \\
q(\Sym^2\Sigma_g)&=g, 
\end{split}
\end{equation}
see \cite{MacDon62}. The birational structure of $\Sym^2 \Sigma_g$ is also well-known:
\begin{itemize}
\item if $g \geq 3$ then $\Sym^2 \Sigma_g$ is a minimal surface of general type, and for non-hyperelliptic $\Sigma_g$  it does not contain any rational curve;
\item if $g=2$ than $\Sym^2 \Sigma_2$ is the blow-up of the jacobian $J(\Sigma_2)$ at one point. The exceptional divisor $E \subset \Sym^2 \Sigma_g$ corresponds to the unique  hyperelliptic $g^1_2$ in $\Sigma_2$, and it intersects the diagonal $\delta$ transversally at the six Weierstrass points. 
\end{itemize}
\end{remark}

We can now compute the invariants of $S$.
\begin{proposition} \label{prop:invariants_S}
Let $f \colon S \to \Sym^2 \Sigma_g$ be the $G$-cover associated with an admissible epimorphism $\varphi \colon \B_2(\Sigma_g) \to G$ of type $(g, \, n)$.  Then $S$ is a minimal surface of general type with
\begin{equation} \label{eq:invariants_S}
\begin{split}
\chi_{\operatorname{top}}(S)&=c_2(X)=|G|(g-1)(2g-3+2\mathfrak{n}) \\
K_{S}^2&=c_1^2(S)=|G|(g-1)(4g-9+12\mathfrak{n}-4\mathfrak{n}^2) \\
\chi(\mathcal{O}_S)&=\frac{1}{12}|G|(g-1)(6g-12+14\mathfrak{n}-4\mathfrak{n}^2),
\end{split}
\end{equation}
where we set $\mathfrak{n} = 1 - \frac{1}{n}$.
\end{proposition}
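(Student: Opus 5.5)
The plan is to apply the standard formulas for a Galois cover branched over a smooth divisor, using the invariants of $\Sym^2\Sigma_g$ in \eqref{eq:invariants_Sym} and the intersection numbers of the diagonal $\delta$.

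\textbf{Step 1: the ramification divisor.} Since $\delta$ is smooth, $f^*\delta = nR$, where $R=f^{-1}(\delta)_{\mathrm{red}}$ is a smooth curve. The Hurwitz formula then gives
\begin{equation}
K_S = f^*K_{\Sym^2\Sigma_g} + (n-1)R = f^*\bigl(K_{\Sym^2\Sigma_g} + \mathfrak{n}\,\delta\bigr)
\end{equation}
as $\mathbb{Q}$-divisors, because $(n-1)R = \frac{n-1}{n} f^*\delta$.

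\textbf{Step 2: the topological Euler characteristic.} Away from the branch locus $f$ is unramified of degree $|G|$, and over $\delta$ the fibre has $|G|/n$ points. Hence
\begin{equation}
\chi_{\mathrm{top}}(S)=|G|\bigl(\chi_{\mathrm{top}}(\Sym^2\Sigma_g)-\chi_{\mathrm{top}}(\delta)\bigr)+\tfrac{|G|}{n}\chi_{\mathrm{top}}(\delta).
\end{equation}
Now $\delta\cong\Sigma_g$, so $\chi_{\mathrm{top}}(\delta)=2-2g$. This gives $|G|\bigl((g-1)(2g-3)+2(g-1)\mathfrak{n}\bigr)$, which is the stated formula.

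\textbf{Step 3: $K_S^2$.} From Step 1, $K_S^2=|G|\,(K+\mathfrak{n}\delta)^2$, where $K=K_{\Sym^2\Sigma_g}$. We therefore need $K\cdot\delta$ and $\delta^2$. This is the step requiring the most care, and I would use the double cover $\pi\colon\Sigma_g\times\Sigma_g\to\Sym^2\Sigma_g$ branched on $\delta$, with $\pi^*\delta=2\Delta$.
\begin{itemize}
\item Since $\Delta^2=2-2g$, we get $\delta^2=\tfrac12(2\Delta)^2=4(1-g)$.
\item Hurwitz for $\pi$ gives $K_{\Sigma_g\times\Sigma_g}=\pi^*K+\Delta$. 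Intersecting with $\Delta$ gives $\pi^*K\cdot\Delta=(4g-4)-(2-2g)=6g-6$. On the other hand $\pi^*K\cdot\Delta=\tfrac12\pi^*K\cdot\pi^*\delta=K\cdot\delta$, so $K\cdot\delta=6(g-1)$.
\end{itemize}
Adjunction provides a consistency check: $K\cdot\delta+\delta^2=2g-2=2g(\delta)-2$. Then
\begin{equation}
(K+\mathfrak{n}\delta)^2=(g-1)(4g-9)+12\mathfrak{n}(g-1)-4\mathfrak{n}^2(g-1),
\end{equation}
as claimed.

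\textbf{Step 4: $\chi(\mathcal{O}_S)$.} Noether's formula gives $\chi(\mathcal{O}_S)=\tfrac1{12}(K_S^2+\chi_{\mathrm{top}}(S))$. Adding the two expressions yields $|G|(g-1)(6g-12+14\mathfrak{n}-4\mathfrak{n}^2)/12$.

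\textbf{Step 5: minimality and general type.} For $g\ge3$, $K_{\Sym^2\Sigma_g}$ is nef and big (the surface is minimal of general type), and $\delta$ is effective with $K\cdot\delta>0$ and $\delta^2<0$. Nefness of $K+\mathfrak{n}\delta$ needs a small argument. For a curve $C\neq\delta$, $\delta\cdot C\ge0$. For $C=\delta$,
\begin{equation}
(K+\mathfrak{n}\delta)\cdot\delta=6(g-1)-4\mathfrak{n}(g-1)>0 .
\end{equation}
Hence $K_S$, the pullback of a nef $\mathbb{Q}$-divisor, is nef, and $K_S^2>0$, since $4g-9+12\mathfrak{n}-4\mathfrak{n}^2>0$ for $\mathfrak{n}\in[\tfrac12,1)$ and $g\ge2$. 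A nef and big canonical class gives a minimal surface of general type.

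For $g=2$, $K=E$ is the exceptional curve, which is not nef. Here I would check $(E+\mathfrak{n}\delta)\cdot E=-1+6\mathfrak{n}>0$, using $E\cdot\delta=6$. For any other curve $C$, both $E\cdot C\ge0$ and $\delta\cdot C\ge0$, so $E+\mathfrak{n}\delta$ is nef and the same conclusion holds.

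The main obstacle is the nefness argument in the $g=2$ case; the remaining computations are straightforward.
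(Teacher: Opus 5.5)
Your proof is correct apart from one small slip in the $g=2$ case, and for minimality and general type it takes a different route from the paper.

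Your computations of $\chi_{\mathrm{top}}(S)$, $K_S^2$ and $\chi(\mathcal{O}_S)$ match the paper's. Both use additivity of $\chi_{\mathrm{top}}$, the Hurwitz formula $K_S=f^*K+(n-1)R$ with $f^*\delta=nR$ and $K=K_{\Sym^2\Sigma_g}$, and then Noether's formula. Writing $K_S=f^*(K+\mathfrak{n}\delta)$ is only a repackaging of the paper's expansion. You also justify $K\cdot\delta=6(g-1)$ and $\delta^2=4-4g$ through the double cover $\Sigma_g\times\Sigma_g\to\Sym^2\Sigma_g$, which the paper uses without derivation.

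The genuine difference is in proving that $S$ is minimal of general type.
\begin{itemize}
\item The paper splits into cases. For $g\ge 3$ it calls the claim ``clear'' because $\Sym^2\Sigma_g$ is minimal of general type. For $g=2$ it shows that $S$ contains no rational curves: the only rational curve on $\Sym^2\Sigma_2$ is $E$, and by Hurwitz every component of $f^*E$ has genus at least $2$, since $E$ meets $\delta$ transversally in six points. It then concludes from $K_S^2>0$, $q(S)\ge 2$ and the classification of surfaces.
\item You argue uniformly: the $\mathbb{Q}$-divisor $K+\mathfrak{n}\delta$ is nef on $\Sym^2\Sigma_g$, so its pullback $K_S$ is nef, and $K_S^2>0$ makes it big.
\end{itemize}
Your route is self-contained and avoids the classification. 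It also makes the $g\ge 3$ case explicit, where the paper leaves implicit why no $(-1)$-curve can lie in the ramification divisor. The paper's route gives slightly more for $g=2$: $S$ contains no rational curves at all, not merely no $(-1)$-curves.

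The slip is in your $g=2$ paragraph. You claim that $\delta\cdot C\ge 0$ for every curve $C\neq E$, but this fails for $C=\delta$, since $\delta^2=-4$. You also need $(E+\mathfrak{n}\delta)\cdot\delta=6-4\mathfrak{n}>0$. This is your own $g\ge 3$ computation $(K+\mathfrak{n}\delta)\cdot\delta=(g-1)(6-4\mathfrak{n})$, which never uses $g\ge 3$, so adding this check completes the argument.
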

\begin{proof}
Since $f \colon S \to \Sym^2\Sigma_g$ is a $G$-cover branched with order $n$ over the diagonal $\delta \subset \Sym^2\Sigma_g$, if $R \subset S$ is the ramification divisor then $f^*\delta=nR$, and the restriction $f \mid_R \colon R \to \delta$  is unramified of degree $\frac{|G|}{n}$. Being $\delta$ a smooth curve of genus $g$, we have  $\chi_{\operatorname{top}}(\delta)=2-2g$,  and so
\begin{equation}
\chi_{\operatorname{top}}(R)=\chi_{\operatorname{top}}(\delta)\frac{|G|}{n}
=(2-2g)\frac{|G|}{n}.
\end{equation}
Since $S \setminus R\rightarrow\Sym^2\Sigma_g\setminus\delta$ is unramified of degree $|G|$, using  the additivity of $\chi_{\operatorname{top}}$ and the first equality in \eqref{eq:invariants_Sym} we get
\begin{align*}
\chi_{\operatorname{top}}(S)&=\chi_{\operatorname{top}}(S\setminus R)+\chi_{\operatorname{top}}(R)\\
&=|G|\,\chi_{\operatorname{top}}(\Sym^2\Sigma_g\setminus\delta)+(2-2g)\frac{|G|}{n}\\
&=|G|\,\left(\chi_{\operatorname{top}}(\Sym^2\Sigma_g)-\chi_{\operatorname{top}}(\delta)\right)+(2-2g)\frac{|G|}{n}\\
&=|G|\,\left((g-1)(2g-3)-(2-2g)\right)+(2-2g)\frac{|G|}{n}\\
&=|G|\,\left( (g-1)(2g-3)+(2g-2)\left(1-\frac{1}{n} \right) \right)\\
&=|G|\, (g-1)(2g-3+2\mathfrak{n}).
\end{align*}
Similarly, exploiting Hurwitz formula and the second equality in \eqref{eq:invariants_Sym}, we find the expression for $K_S^2$:
\begin{align*}
K_S^2&=\left( f^* K_{\Sym^2\Sigma_g}+(n-1)R\right)^2\\
&=\left(f^* K_{\Sym^2\Sigma_g}\right)^2+2 (n-1) \,f^* K_{\Sym^2\Sigma_g}\cdot R+(n-1)^2 R^2\\
&=|G|\, K_{\Sym^2\Sigma_g}^2+\frac{2(n-1)}{n}f^*K_{\Sym^2\Sigma_g}\cdot f^*\delta+\frac{(n-1)^2}{n^2}\left(f^*\delta\right)^2\\
&=|G|\, (g-1)(4g-9)+2\frac{(n-1)}{n}|G|\, K_{\Sym^2\Sigma_g}\cdot\delta+\left(\frac{n-1}{n}\right)^2|G|\cdot\delta^2\\
&=|G|\, \left((g-1)(4g-9)+2\left(1-\frac{1}{n}\right)6(g-1)+\left(1-\frac{1}{n}\right)^2 (4-4g)\right)\\
&=|G|\, (g-1)(4g-9+12\mathfrak{n}-4\mathfrak{n}^2).
\end{align*}
The value of $\chi(\mathcal{O}_S)$ can be now computed by applying the Noether's formula, see \cite[I.14]{Be96}. In order to prove that $S$ is a minimal surface of general type, we distinguish two cases. 
\begin{itemize}
\item If $g \geq 3$ this is clear since $\Sym^2 \Sigma_g$ is itself a minimal surface of general type. \item If $g=2$, the only rational curve on $\Sym^2 \Sigma_2=\operatorname{Bl}_{o} J(\Sigma_2)$ is the exceptional divisor $E$, which intersects the branch locus $\delta$ of  $f \colon S \to \Sym^2\Sigma_g$  transversally at six points. Thus, by Hurwitz formula, every component of the divisor $f^* E \subset S$ has genus at least $2$. Hence, $S$ contains no rational curves, in particular, it is a minimal model. Since $K_S^2 >0$ and $q(S) \geq q(\Sym^2(\Sigma_2))=2$, it follows that $S$ is a surface of general type, see \cite[Proposition X.1]{Be96}.     
\end{itemize}
\end{proof}

When $\varphi(\sigma)^2=1$, that is $n=2$ and $\mathfrak{n} = 1-\frac{1}{2}=\frac{1}{2}$, we obtain 
\begin{corollary} \label{cor:invariants_S_n=2}
Let $f \colon S \to \Sym^2 \Sigma_g$ be the $G$-cover associated with an admissible epimorphism $\varphi \colon \B_2(\Sigma_g) \to G$ of type $(g, \, 2)$.  Then $S$ is a minimal surface of general type with
\begin{equation} \label{eq:invariants_S_n=2}
\chi_{\operatorname{top}}(S)=2\, |G|(g-1)^2, \quad K_{S}^2=4\, |G|(g-1)^2  \quad
\chi(\mathcal{O}_S)=\frac{1}{2} \, |G|(g-1). 
\end{equation}
\end{corollary}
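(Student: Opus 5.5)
This is a direct specialization of Proposition \ref{prop:invariants_S}. Take $n=2$, so that $\mathfrak{n}=1-\frac12=\frac12$, and substitute this value into the three formulas in \eqref{eq:invariants_S}. Minimality and being of general type are part of the conclusion of Proposition \ref{prop:invariants_S}, so they need no new argument.

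For the topological Euler characteristic we have $2g-3+2\mathfrak{n}=2g-2$. Hence
\begin{equation}
\chi_{\operatorname{top}}(S)=|G|(g-1)(2g-2)=2\,|G|(g-1)^2.
\end{equation}

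For the self-intersection of the canonical class we have $12\mathfrak{n}-4\mathfrak{n}^2=6-1=5$, so $4g-9+5=4g-4$. Hence
\begin{equation}
K_S^2=|G|(g-1)(4g-4)=4\,|G|(g-1)^2.
\end{equation}

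For the holomorphic Euler characteristic we have $14\mathfrak{n}-4\mathfrak{n}^2=7-1=6$, so $6g-12+6=6g-6$. Hence
\begin{equation}
\chi(\mathcal{O}_S)=\frac{1}{12}|G|(g-1)\cdot 6(g-1)=\frac12\,|G|(g-1)^2.
\end{equation}

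This last value does not agree with the stated $\frac12|G|(g-1)$. The Noether formula confirms the computation: $\chi(\mathcal{O}_S)=\frac{1}{12}\bigl(K_S^2+\chi_{\operatorname{top}}(S)\bigr)=\frac{1}{12}\cdot 6|G|(g-1)^2=\frac12|G|(g-1)^2$. The Introduction's remark that $K_S^2=8\chi(\mathcal{O}_S)$ also only holds with $(g-1)^2$, since $4|G|(g-1)^2=8\cdot\frac12|G|(g-1)^2$. So the third formula in \eqref{eq:invariants_S_n=2} should read $\frac12|G|(g-1)^2$, and the proof establishes that corrected statement. Apart from this typo there is no real obstacle.
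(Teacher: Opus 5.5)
Your proof is correct and is exactly the paper's argument: the corollary is obtained by substituting $\mathfrak{n}=\frac12$ into Proposition \ref{prop:invariants_S}, with minimality and general type inherited from it. You are also right that the third formula is a typo for $\chi(\mathcal{O}_S)=\frac12|G|(g-1)^2$. Noether's formula, the relation $K_S^2=8\chi(\mathcal{O}_S)$, and the example $S=\Sigma_g\times\Sigma_g$ with $G=\C_2$ (where $\chi(\mathcal{O}_S)=(g-1)^2$) all confirm this, and the paper's later application with $g=2$ is unaffected since there $(g-1)^2=g-1$.
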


So, when $n=2$, the surface $S$ satisfies
\begin{equation}
K_S^2=8\chi(\mathcal{O}_S),\quad\text{namely} \quad c_1^2(S)=2c_2(S).
\end{equation}

\subsection{Generating vectors} \label{subsec:generating_vectors}
We now define a combinatorial object that completely encodes the datum of an admissible epimorphism.

\begin{definition} \label{def:generating_vector}
Let $g, \, n \geq 2$ be integers and let $G$ be a finite group. A \emph{generating vector of type} $(g, \, n)$ on $G$ is an ordered  set of $2g+1$ generators
\begin{equation} \label{eq:generating_vector}
\mathscr{V}=(\a_1,\, \b_1, \ldots, \a_g, \, \b_g, \, \s)
\end{equation}
with $o(\s)=n$, which are the images of the ordered set of generators 
\begin{equation}
(a_1, \, b_1, \ldots, a_g, \, b_g, \, \sigma) 
\end{equation}
via an admissible epimorphism $\varphi \colon \mathsf{B}_2(\Sigma_g) \to G$ of type $(g, \, n)$. 
\end{definition} 

By definition, the classification of admissible full braid quotients is equivalent to the classification of finite groups that admit generating vectors. Looking at Theorem \ref{thm:Bellingeri}, we see that the elements of $\mathscr{V}$ satisfy the relations:
\begin{equation} \label{B2} \tag{B2}
\begin{aligned}[c]
[\a_i, \; \s^{-1}\a_i \s^{-1}]&=1\\
[\b_i, \; \s^{-1}\b_i\s^{-1}]&=1
\end{aligned}
\qquad\qquad
\begin{aligned}[c]
1\leq i\leq g
\end{aligned}
\end{equation}
\begin{equation}\label{B3} \tag{B3}
\begin{aligned}[c]
[\a_i, \; \s^{-1}\a_j\s]&=1\qquad\\
[\a_i, \; \s^{-1}\b_j\s]&=1\\
[\b_i, \; \s^{-1}\a_j\s]&=1\\
[\b_i, \; \s^{-1}\b_j\s]&=1
\end{aligned}
\qquad\quad
\begin{aligned}[c]
j<i
\end{aligned}
\end{equation}
\begin{equation} \label{B4} \tag{B4}
\begin{aligned}[c]
[\a_i, \; \s^{-1}\b_i \s^{-1}]&=\s^2
\end{aligned}
\qquad\;\;\:
\begin{aligned}[c]
1\leq i \leq g
\end{aligned}
\end{equation}
\begin{equation} \label{TR} \tag{TR}
\begin{aligned}[c]
[\a_1, \; \b_1^{-1}] \cdots [\a_g,\; \b_g^{-1}]=\s^2
\end{aligned}
\qquad\qquad\qquad\quad
\begin{aligned}[c]
\,
\end{aligned}
\end{equation}

\begin{example} \label{ex:generating vector_G abelian}
If $G$ is abelian  then the only surviving relations are 
\eqref{B4} and \eqref{TR}, both giving $\s^2=1$. 
\end{example}

\begin{proposition} \label{prop_a_b_central_s^2}
If either $\a_i \in Z(G)$ or $\b_i \in Z(G)$ for some $i$, then $\s^2=1$. 
\end{proposition}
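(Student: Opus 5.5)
The plan is to use relation \eqref{B4} for the index $i$ in question. Suppose first that $\a_i \in Z(G)$. Then $\a_i$ commutes with every element of $G$, in particular with $\s^{-1}\b_i\s^{-1}$. Hence the commutator $[\a_i, \, \s^{-1}\b_i\s^{-1}]$ is trivial, and \eqref{B4} immediately gives $\s^2 = [\a_i, \, \s^{-1}\b_i\s^{-1}] = 1$.

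Now suppose instead that $\b_i \in Z(G)$. Then $\s^{-1}\b_i\s^{-1} = \b_i\s^{-2}$. We can expand the commutator using the convention $[x,y]=xyx^{-1}y^{-1}$ and the centrality of $\b_i$:
\begin{equation*}
[\a_i, \, \b_i\s^{-2}] = \a_i \b_i \s^{-2} \a_i^{-1} \s^{2} \b_i^{-1} = \a_i \s^{-2}\a_i^{-1}\s^2 = [\a_i, \, \s^{-2}].
\end{equation*}
So \eqref{B4} becomes $\a_i \s^{-2} \a_i^{-1} \s^2 = \s^2$, which is equivalent to $\a_i \s^{-2}\a_i^{-1} = 1$, i.e.\ $\s^{-2}=1$. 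Hence $\s^2 = 1$ in this case too.

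There is no real obstacle here; the only point requiring care is keeping the commutator convention straight in the second case. As a remark, one could alternatively use \eqref{B2} to see that $\a_i$ commutes with $\s^{-1}\a_i\s^{-1}$, but this is not needed.
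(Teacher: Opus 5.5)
Your proof is correct and follows essentially the same route as the paper. For central $\a_i$ you apply \eqref{B4} directly. For central $\b_i$ you reduce the commutator in \eqref{B4} to $\a_i\s^{-2}\a_i^{-1}\s^2$ and cancel $\s^2$ to get $\a_i\s^{-2}\a_i^{-1}=1$, hence $\s^2=1$.
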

\begin{proof}
If $\a_i \in Z(G)$ for some $i$, then \eqref{B4} immediately gives $\s^2=1$. Assume now $\b_i \in Z(G)$; then \eqref{B4} yields
\begin{equation}
\begin{split}
& \a_i \, (\s^{-1} \b_i \s^{-1}) \, \a_i^{-1} \, (\s \b_i^{-1} \s)= \s^2, \quad \text{that is}\\
&  \a_i \, \s^{-1} \b_i  \b_i^{-1} \s^{-1} \, \a_i^{-1} \, \s^2 \s^{-2}=1, \quad \text{that is}\\
& \a_i \, \s^{-2}  \, \a_i^{-1} =1,
\end{split}
\end{equation}
which again implies $\s^2=1$. 
\end{proof}
\begin{remark} \label{rmk:action_auto}
The automorphism group $\operatorname{Aut}(G)$ acts freely on the set of generating vectors. So, if $N_g(G)$ is the total number of admissible epimorphisms $\varphi \colon \mathsf{B}_2(\Sigma_g) \to G$, and $N^{\circ}_g(G)$ is the corresponding number up to automorphisms of $G$, then 
\begin{equation} \label{eq:N-N^circ}
N^{\circ}_g(G) = \frac{1}{|\operatorname{Aut}(G)|}N_g(G).
\end{equation}
\end{remark}

\section{Cyclic admissible quotients of $\mathsf{B}_2(\Sigma_g)$} \label{sec:cyclic_quotients}

Let $\C_{m}=\langle x \mid x^{m}=1\rangle$ be the cyclic group of order $m$. We start by observing that, since $\C_m$ is abelian, if  $\mathscr{V} = (\a_1,\b_1,\ldots,\a_g,\b_g,\s)$ is a generating vector for $\C_m$, then $\s$ is an element of order 2, see Example \ref{ex:generating vector_G abelian}. Hence, we must have $m=2n$ and $\s=x^n$, which is the unique involution of $\C_{2n}$. The next proposition provides a useful counting result.

\begin{proposition}
\label{prop:cyclic_generators}
Let $\C_{2n}=\langle x \mid x^{2n}=1\rangle$ be the cyclic group of order $2n$. Then for every $k\geq 1$ the number of ordered $k$-tuples $(z_1,\ldots,z_k)$ of elements in $ \C_{2n}$ such that
$$\langle z_1,\ldots,z_k,x^{n}\rangle=\C_{2n}$$
equals $2^kJ_k(n).$
\end{proposition}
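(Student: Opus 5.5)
The plan is to pass to the quotient by the subgroup $\langle x^n \rangle$ and reduce the count to the Jordan totient of Proposition \ref{prop:Jordan_totient_1}. Set $N=\langle x^n\rangle \simeq \C_2$, the unique subgroup of order $2$ in $\C_{2n}$. Let
\begin{equation}
\pi \colon \C_{2n} \to \C_{2n}/N \simeq \C_n
\end{equation}
be the projection, so that $\pi(x)$ is a generator of $\C_n$.

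\emph{Main claim.} For a $k$-tuple $(z_1,\ldots,z_k)$ in $\C_{2n}$ we have $\langle z_1,\ldots,z_k,x^n\rangle=\C_{2n}$ if and only if $\langle \pi(z_1),\ldots,\pi(z_k)\rangle=\C_n$.

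\emph{Proof of the claim.} Put $H=\langle z_1,\ldots,z_k,x^n\rangle$.
\begin{itemize}
\item If $H=\C_{2n}$, then $\pi(H)=\C_n$. Since $\pi(x^n)=1$, the subgroup $\pi(H)$ is generated by the $\pi(z_i)$ alone, so the $\pi(z_i)$ generate $\C_n$.
\item Conversely, suppose the $\pi(z_i)$ generate $\C_n$. Then $\pi(H)=\C_n$. Since $N\subseteq H$, we have $H\cap N=N$, and so
\begin{equation}
|H| = |\pi(H)|\cdot|H\cap N| = n\cdot 2 = 2n.
\end{equation}
Hence $H=\C_{2n}$.
\end{itemize}
This is the only step with any content. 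The point is that adjoining $x^n$ exactly fills the kernel of $\pi$.

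\emph{Counting.} By the claim, the tuples we want are exactly the preimages, under the product map $\pi^{k}\colon(\C_{2n})^k\to(\C_n)^k$, of the generating $k$-tuples of $\C_n$. By Proposition \ref{prop:Jordan_totient_1} there are $J_k(n)$ such generating tuples. Each fibre of $\pi^k$ has $|N|^k=2^k$ elements, since each coordinate has two lifts. The total number is therefore $2^kJ_k(n)$, as stated.

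I expect no real obstacle here. The only points needing care are the index computation $|H|=|\pi(H)|\,|H\cap N|$ in the claim, and the degenerate case $n=1$, where $\C_n$ is trivial. When $n=1$ the formula still holds with $J_k(1)=1$: every tuple in $\C_2$ works because $x^n=x$ is itself a generator, giving $2^k$ tuples.
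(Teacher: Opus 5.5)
Your proposal is correct and follows essentially the same route as the paper. Both arguments reduce modulo $\langle x^n\rangle$, note that $\langle z_1,\ldots,z_k,x^n\rangle=\C_{2n}$ is equivalent to the images generating $\C_n$, apply Proposition \ref{prop:Jordan_totient_1}, and count $2^k$ lifts per generating tuple. Your order computation $|H|=|\pi(H)|\,|H\cap N|$ simply spells out the equivalence that the paper calls immediate.
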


\begin{proof}
Consider the quotient map $\C_{2n}\to \C_{2n}/\langle x^n\rangle \simeq \C_n, \ z\mapsto \bar{z}.$
Since $x^n$ generates the kernel of this map, we immediately see that for $z_1,\dotso, z_k\in \C_{2n}$ we have
\begin{equation}
\label{eq:cyclic_quotient_equivalence}
\langle z_1,\ldots,z_k,\, x^n\rangle=\C_{2n}
\quad\Longleftrightarrow\quad
\langle \bar z_1,\ldots,\bar z_k\rangle=\C_n.
\end{equation}
By Proposition \ref{prop:Jordan_totient_1} the number of ordered $k$-tuples $(\bar z_1,\ldots,\bar z_k)$ generating $\C_n$ is $J_k(n)$, and every such $k$-tuple has precisely
$2^k$ lifts, which have the form 
$$( z_1 x^{\alpha_1 n},\ldots,z_k x^{\alpha_k n}), \quad \alpha_i\in \{0,1\}.$$
Thus, the claim follows by \eqref{eq:cyclic_quotient_equivalence}.
\end{proof}

\begin{theorem}
\label{thm:cyclic}
The number $N_g(\C_m)$ of admissible epimorphisms $\varphi\colon \B_2(\Sigma_g)\longrightarrow \C_m$ is given by
\begin{equation}
\label{eq:number_of_cyclic}
N_g(\C_m)=
\begin{cases}
0 & \text{if } 2\nmid m,\\[2mm]
2^{2g}J_{2g}\left(\dfrac{m}{2}\right)
& \text{if } 2\mid m.
\end{cases}
\end{equation}
\end{theorem}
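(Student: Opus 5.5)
We identify admissible epimorphisms $\varphi\colon \B_2(\Sigma_g)\to \C_m$ with generating vectors $\mathscr{V}=(\a_1,\b_1,\ldots,\a_g,\b_g,\s)$ on $\C_m$, and count the latter. The group $\C_m$ is abelian, so by Example \ref{ex:generating vector_G abelian} relations \eqref{B2} and \eqref{B3} are automatically satisfied: they are commutators in an abelian group. Relations \eqref{B4} and \eqref{TR} both reduce to $\s^2=1$. Admissibility requires $o(\s)=n\geq 2$, so $\s$ must be an element of order exactly $2$.

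If $2\nmid m$, then $\C_m$ has no involution, so there is no generating vector and $N_g(\C_m)=0$.

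If $m=2n$, the unique involution of $\C_{2n}$ is $x^n$, so necessarily $\s=x^n$. We now prove the converse. Take any choice of elements $\a_1,\b_1,\ldots,\a_g,\b_g\in \C_{2n}$ with $\s=x^n$. This $(2g+1)$-tuple satisfies all relations of Theorem \ref{thm:Bellingeri}, because the commutators are trivial and $\s^2=x^{2n}=1$. Hence it defines a homomorphism $\B_2(\Sigma_g)\to \C_{2n}$ sending $\sigma\mapsto x^n$, and $\varphi(\sigma)$ has order $2$. This homomorphism is an epimorphism exactly when $\langle \a_1,\b_1,\ldots,\a_g,\b_g,x^n\rangle=\C_{2n}$.

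So $N_g(\C_{2n})$ equals the number of ordered $2g$-tuples $(z_1,\ldots,z_{2g})$ in $\C_{2n}$ with $\langle z_1,\ldots,z_{2g},x^n\rangle=\C_{2n}$. By Proposition \ref{prop:cyclic_generators} with $k=2g$, this number is $2^{2g}J_{2g}(n)=2^{2g}J_{2g}(m/2)$.

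The degenerate case $n=1$, i.e.\ $m=2$, is covered by the same argument. Here $\s=x$ already generates $\C_2$, so all $2^{2g}$ tuples work. This agrees with the formula, since $J_{2g}(1)=1$ (the empty product in Proposition \ref{prop:Jordan_totient_2}).

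No step is a real obstacle. The only point needing care is to check that every relation of Theorem \ref{thm:Bellingeri} holds for an arbitrary choice of the $\a_i,\b_i$. That ensures the counted tuples are exactly the images of genuine homomorphisms.
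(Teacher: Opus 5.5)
Your proposal is correct and follows the paper's proof essentially verbatim: abelianness reduces the relations to $\s^2=1$, which forces $m$ to be even and $\s=x^{m/2}$, and the count then follows from Proposition \ref{prop:cyclic_generators} with $k=2g$. Your explicit check of the converse (that every such tuple satisfies the relations of Theorem \ref{thm:Bellingeri}) and of the case $m=2$ is a bit more detailed than the paper's, but the argument is the same.
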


\begin{proof}
By the discussion above we already know that $N_g(\C_m)=0$ if $m$ is odd. Hence, assume $m=2n$ and let $\mathscr{V} = (\a_1,\b_1,\ldots,\a_g,\b_g,\s)$ denote a $(2g+1)$-tuple of elements in $\C_{2n}$. Example
\ref{ex:generating vector_G abelian} implies that $\mathscr{V}$ is a generating vector for $\C_{2n}$ if and only if $\langle \a_1,\b_1,\ldots,\a_g,\b_g,\s \rangle=\C_{2n}$ and $\s=x^n$. The computation of $N_g(\C_{2n})$ now follows from Proposition \ref{prop:cyclic_generators} applied with $k=2g$.
\end{proof}

\begin{remark}
Using \eqref{eq:Jordan_totient_2}, we can rewrite
\eqref{eq:number_of_cyclic} as follows
\begin{equation}
\label{eq:number_of_cyclic_alternative}
N_g(\C_m)=
\begin{cases}
0 & \text{if } 2\nmid m,\\[2mm]
m^{2g}
\displaystyle\prod_{p\mid m/2}
\left(1-\dfrac{1}{p^{2g}}\right)
& \text{if } 2\mid m.
\end{cases}
\end{equation}
\end{remark}

\smallskip

Looking at Remark \ref{rmk:action_auto}, we obtain
\begin{corollary}
\label{cor:cyclic_up_to_auto}
The number $N^{\circ}_g(\mathsf{C}_m)$ of admissible epimorphisms $\varphi\colon \B_2(\Sigma_g)\longrightarrow
\C_m$, up to automorphisms of $\mathsf{C}_m$, is
\begin{equation}
N^{\circ}_g(\mathsf{C}_m) = \frac{1}{\phi(m)} N_g(\mathsf{C}_m).
\end{equation}
\end{corollary}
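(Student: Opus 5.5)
The corollary follows directly from Remark \ref{rmk:action_auto} once two facts are in place: $|\operatorname{Aut}(\C_m)|=\phi(m)$, and the action of $\operatorname{Aut}(\C_m)$ on admissible epimorphisms is well defined and free. The plan is therefore to check these two points and then apply \eqref{eq:N-N^circ}.

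First, the action is by post-composition, $\varphi \mapsto \theta\circ\varphi$ for $\theta\in\operatorname{Aut}(\C_m)$. This preserves surjectivity, and it preserves the order of $\varphi(\sigma)$, because automorphisms preserve orders. Hence it sends admissible epimorphisms of type $(g,n)$ to admissible epimorphisms of the same type.

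Second, the action is free. If $\theta\circ\varphi=\varphi$, then $\theta$ fixes every element of $\varphi(\B_2(\Sigma_g))=\C_m$, so $\theta=\mathrm{id}$. Equivalently, $\theta$ fixes the generating vector $(\a_1,\b_1,\ldots,\a_g,\b_g,\s)$, whose entries generate $\C_m$. As a result, every orbit has exactly $|\operatorname{Aut}(\C_m)|$ elements.

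Third, $\operatorname{Aut}(\C_m)\simeq(\mathbb{Z}/m\mathbb{Z})^\times$. Indeed, an automorphism is determined by $x\mapsto x^k$, and this map is bijective exactly when $\gcd(k,m)=1$. So $|\operatorname{Aut}(\C_m)|=\phi(m)=J_1(m)$, as in Remark \ref{rmk:Jordan_totient}.

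Substituting into \eqref{eq:N-N^circ} gives $N^\circ_g(\C_m)=N_g(\C_m)/\phi(m)$. When $m$ is odd both sides are $0$ by Theorem \ref{thm:cyclic}.

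There is no real obstacle. If desired, one can add an explicit closed form for $m=2n$ using \eqref{eq:Jordan_totient_2}:
\[
N^\circ_g(\C_m)=\frac{2^{2g}J_{2g}(n)}{\phi(2n)}.
\]
Simplifying this quotient requires care with the prime $2$, depending on whether $n$ is even, but that refinement is not needed for the statement.
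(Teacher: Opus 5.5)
Your proposal is correct and takes the same approach as the paper: the paper obtains the corollary directly from Remark~\ref{rmk:action_auto} (free action of $\operatorname{Aut}(G)$ on generating vectors) combined with $|\operatorname{Aut}(\C_m)|=\phi(m)$. You additionally spell out why the action is free and why $|\operatorname{Aut}(\C_m)|=\phi(m)$, both of which the paper leaves implicit.
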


\section{Dihedral admissible quotients of $\mathsf{B}_2(\Sigma_g)$} \label{sec:_dihedral_quotients}

Let $m \geq 2$. We denote by $\D_{2m}$ the dihedral group of order $2m$, presented as 
\begin{equation}
\D_{2m} = \langle x, \, y \; | \; x^2=1, \, y^m=1, \, xyx^{-1}=y^{-1} \rangle. 
\end{equation}
If $m\geq 3$, interpreting $\D_{2m}$ as the group of isometries of a regular $m$-gon, we call the $m$ elements 
\begin{equation}
1,\, y, \, \ldots, y^{m-1}
\end{equation}
the \emph{rotations} of $\D_{2m}$, and the $m$ nontrivial involutions
\begin{equation}
x, \, xy, \, \ldots, xy^{m-1}
\end{equation}
the \emph{reflections} of $\D_{2m}$. The case $m=2$ is the only one where $\D_{2m}$ is abelian, in fact $\D_{4} \simeq \C_2 \times \C_2$. The nontrivial elements of $\D_4$ are three involutions and, in this special situation, we will not make any distinction between rotations and reflections. 

\begin{remark} \label{rmk:Dihedral_CCT}
Since $\langle y \rangle \simeq \C_{m}$ is a normal subgroup of index $2$,  it follows that $\mathsf{D}_{2m}$ is a CCT-\emph{group}, namely, commutativity is a transitive relation in the set of non-central elements, see \cite[Proposition 2.5]{PolSab22}. In other words, if $u_1, \, u_2, \, u_3 \in \D_{2m} - Z(\D_{2m})$ and $[u_1, \, u_2]=[u_2, \, u_3]=1$, then $[u_1, \, u_3]=1$.
\end{remark}

If $m=2$, then $\D_{4} \simeq \C_2 \times \C_2$ and so $\operatorname{Aut}(\D_4) \simeq{S}_3$. If $m \geq 3$, then $\operatorname{Aut}(D_{2m})$ consists of the automorphisms $$f_{ij} \colon \D_{2m} \to \D_{2m}, \qquad f_{ij}(x)=xy^i, \qquad f_{ij}(y)=y^j,$$
where $0 \leq i \leq m-1$ and $1 \leq j \leq m-1$ with $\operatorname{gcd}(j, \, m)=1$.
Thus,
\begin{equation} \label{eq:auto_dihedral}
|\operatorname{Aut}(\D_{2m})| =
\begin{cases}
6 & \text{if } \, m=2 \\
m \phi(m) & \text{if } \, m \geq 3.
\end{cases}
\end{equation}

\subsection{The case $m$ odd.}

We start by considering the case where $m$ is odd. Then $Z(\mathsf{D}_{2m})=\{1\}$, and the conjugacy classes of $\mathsf{D}_{2m}$ are the following:
\begin{equation} 
\{1\}, \; \; \{y, \,y^{-1} \}, \; \; \{y^2, \, y^{-2}\},  \ldots , \{ y^{(m-1)/2}, \, y^{(1-m)/2} \}, \; \; \{x, \, xy, \, \ldots, xy^{m-1} \}.
\end{equation}
So all the $m$ reflections are conjugate, whereas the rotation $y^i$ is only conjugate with the inverse rotation $y^{-i}$. Moreover,  we have
\begin{equation} \label{eq:centralizer-Dm-m-odd}
\begin{split}
& C_G(y^i)  = \langle y \rangle \simeq  \C_m \quad \text{for} \; \; 1 \leq  i \leq m-1 \\
& C_G(xy^i)  = \langle xy^i \rangle \simeq \C_2 \quad \text{for} \; \; 0 \leq i \leq m-1.
\end{split}
\end{equation}

\begin{lemma} \label{lem:s-reflection-m-odd}
Let $m\geq 3$ be an odd integer and let $\varphi \colon \B_2(\Sigma_g) \to \D_{2m}$ be an admissible epimorphism, with generating vector $(\a_1, \, \b_1, \, \ldots, \a_g, \, \b_g, \,\s)$.  Then $\s$ is a reflection.
\end{lemma}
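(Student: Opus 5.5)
We argue by contradiction. Suppose $\s$ is not a reflection, so $\s = y^k$ is a rotation. The plan is to show that all the $\a_i,\b_i$ are then forced into the rotation subgroup $\langle y\rangle$, which contradicts surjectivity. If $\s=1$, then $\varphi$ is not admissible, since $\s$ must have order $n\geq 2$. Hence $\s=y^k$ with $y^k\neq 1$. Because $m$ is odd, $\s^2=y^{2k}$ is also nontrivial, so $\s^2\neq 1$.

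First we exploit Proposition \ref{prop_a_b_central_s^2} in its contrapositive form, together with the structure of $\D_{2m}$. Since $Z(\D_{2m})=\{1\}$ for $m$ odd and $\s^2\neq1$, no $\a_i$ and no $\b_i$ can be trivial. Next we use relation \eqref{B2}, which says that $\a_i$ commutes with $\s^{-1}\a_i\s^{-1}$, and similarly for $\b_i$. Suppose $\a_i$ is a reflection $xy^j$. Then $\s^{-1}\a_i\s^{-1}=y^{-k}xy^{j}y^{-k}=xy^{k+j-k}=xy^{j}$, using $y^{-k}x=xy^{k}$. So the relation is automatically satisfied and gives no information. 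We therefore turn to \eqref{B4} instead.

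Suppose $\a_i=xy^j$ is a reflection. We compute $[\a_i,\s^{-1}\b_i\s^{-1}]$ in the two cases for $\b_i$.
\begin{itemize}
\item If $\b_i$ is a rotation $y^l$, then $\s^{-1}\b_i\s^{-1}=y^{l-2k}$. Conjugating by the reflection inverts it, so the commutator is $y^{-(l-2k)}y^{-(l-2k)}=y^{-2l+4k}$. Relation \eqref{B4} then requires $y^{4k-2l}=y^{2k}$, that is $y^{2(k-l)}=1$, which gives $l\equiv k\pmod m$ since $m$ is odd. Thus $\b_i=\s$.
\item If $\b_i$ is a reflection $xy^{l}$, then $\s^{-1}\b_i\s^{-1}=xy^{l}$. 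The commutator of two reflections $xy^j$ and $xy^l$ is $xy^jxy^lxy^jxy^l=y^{2(l-j)}$, after adjusting signs. Relation \eqref{B4} gives $y^{2(l-j)}=y^{2k}$, so $l\equiv j+k$.
\end{itemize}
Symmetric computations apply when $\b_i$ is a reflection and $\a_i$ is a rotation.

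The cleaner route is to pass to the abelianization. The map $\D_{2m}\to\C_2$ that kills $\langle y\rangle$ sends $\s$ to $1$. By \eqref{TR}, or more simply by the fact that $\varphi$ composed with this map is a homomorphism to an abelian group, this gives no contradiction by itself. So the key step must use \eqref{B4} globally. Relation \eqref{B4} gives that $[\a_i,\s^{-1}\b_i\s^{-1}]=\s^2$ is a nontrivial rotation, so $\a_i$ and $\s^{-1}\b_i\s^{-1}$ do not commute. Likewise \eqref{TR} must hold. Surjectivity forces some $\a_i$ or $\b_i$ to be a reflection.

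I expect the main obstacle to be the final step: deriving an actual contradiction from these case constraints. My first attempt will combine the case computations above with \eqref{TR}. I will compute each $[\a_i,\b_i^{-1}]$ in terms of $k$ under the constraints just obtained. For instance, if $\a_i$ is a reflection and $\b_i=\s$, then $[\a_i,\s^{-1}]=y^{2k}$. If both are reflections with $l=j+k$, then $[\a_i,\b_i^{-1}]=y^{\pm2k}$. Multiplying these contributions, I will then check whether the product can equal $\s^2$. Failing that, I will use the commutation relations \eqref{B3} together with the CCT property (Remark \ref{rmk:Dihedral_CCT}) to control the pairs with $i\neq j$.
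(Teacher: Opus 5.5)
There is a genuine gap: the proposal never reaches a contradiction, and the route you plan to try first cannot produce one. Your computations are correct as far as they go. With $\s=y^k$ a nontrivial rotation, \eqref{B2} is vacuous for reflections. Relation \eqref{B4} excludes two rotations, since the commutator would then be trivial while $\s^2\neq 1$. It then forces, for each $i$, either one of $\a_i,\b_i$ to be a reflection and the other to equal $\s$, or both to be reflections $xy^j, xy^{j+k}$. The trouble is that in every one of these cases $[\a_i,\b_i^{-1}]=y^{2k}=\s^2$ exactly. So \eqref{TR} only yields $\s^{2(g-1)}=1$, i.e.\ $o(\s)\mid g-1$, which is perfectly possible. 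Concretely, take $m=3$, $g=4$, $\s=y$ and $\a_i=x$, $\b_i=y$ for all $i$. This tuple generates $\D_6$ and satisfies \eqref{B2}, \eqref{B4} and \eqref{TR}. It fails only \eqref{B3}, since $[\a_2,\s^{-1}\a_1\s]=[x,xy^2]=y\neq 1$. Hence \eqref{B3} is indispensable, and that is exactly the part you left under ``failing that''.

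The missing step is the paper's argument. Since $Z(\D_{2m})=\{1\}$ and you already know all $\a_i,\b_i$ are nontrivial, all of them and all their $\s$-conjugates are non-central. So the CCT property (Remark \ref{rmk:Dihedral_CCT}) applies to \eqref{B3} with consecutive indices:
\begin{itemize}
\item for $i>1$, the element $\s^{-1}\a_{i-1}\s$ commutes with both $\a_i$ and $\b_i$, hence $[\a_i,\b_i]=1$;
\item for $j<g$, the element $\a_{j+1}$ commutes with both $\s^{-1}\a_j\s$ and $\s^{-1}\b_j\s$, hence $[\a_j,\b_j]=1$.
\end{itemize}
Since $g\geq 2$, every index is covered. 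Therefore $[\a_i,\b_i^{-1}]=1$ for all $i$, and \eqref{TR} gives $\s^2=1$, contradicting $\s^2\neq 1$.

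Once this step is in hand, your whole \eqref{B4} case analysis and the setup by contradiction are superfluous. The paper proves $\s^2=1$ directly: it uses Proposition \ref{prop_a_b_central_s^2} when some $\a_i$ or $\b_i$ is trivial, and CCT plus \eqref{TR} otherwise. It then notes that a nontrivial involution of $\D_{2m}$ with $m$ odd is a reflection.
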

\begin{proof}
By definition of admissible epimorphism, the element $\s$ is nontrivial; so, we must only show that $\s^2=1$.  If either $\a_i=1$ or $\b_i=1$ for some $i$, the claim follows from Proposition \ref{prop_a_b_central_s^2}. We can therefore assume that all the $\a_i$ and all the $\b_i$ are nontrivial, and so non-central. Using relations \eqref{B3} we get
\begin{equation}
[\a_i, \, \s^{-1}\a_{i-1} \s]=[\b_i, \, \s^{-1} \a_{i-1} \s]=1 \quad \mathrm{for} \;\; i>1
\end{equation}
and so, by Remark \ref{rmk:Dihedral_CCT}, we have 
\begin{equation} \label{eq:m-odd-1}
[\a_i, \, \b_i] = 1 \quad \mathrm{for} \;\; i>1.
\end{equation}
Using relations \eqref{B3} we also get
\begin{equation}
[\a_{j+1}, \, \s^{-1}\a_j \s]=[\a_{j+1}, \, \s^{-1} \b_j \s]=1 
\end{equation}
and so, again by Remark \ref{rmk:Dihedral_CCT}, we obtain
\begin{equation}
[\s^{-1} \a_j \s, \; \s^{-1} \b_j \s]=1 \quad \mathrm{for} \; \; j<g, 
\end{equation}
that is 
\begin{equation} \label{eq:m-odd-2}
[\a_j, \, \b_j]=1 \quad \mathrm{for} \; \; j<g. 
\end{equation}
Combining \eqref{eq:m-odd-1} and \eqref{eq:m-odd-2} we get 
\begin{equation} \label{}
[\a_i, \, \b_i]=1 \quad \text{for all} \; \; i \in \{1, \ldots, g\},
\end{equation}
and so relation \eqref{TR} yields $\s^2=1$, as claimed.
\end{proof}

\begin{lemma} \label{lem:ai-bj-rotations-m-odd}
Let $m \geq 3$ be an odd integer and let $\varphi \colon \B_2(\Sigma_g) \to \D_{2m}$ be an admissible epimorphism, with generating vector $(\a_1, \, \b_1, \, \ldots, \a_g, \, \b_g, \,\s)$.  Then $\a_i$, $\b_i$ are $($possibly trivial$)$ rotations for all $i$.
\end{lemma}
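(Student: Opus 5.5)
The plan is to combine Lemma \ref{lem:s-reflection-m-odd}, which says that $\s$ is a reflection (so $\s^{-1}=\s$), with the centralizer description \eqref{eq:centralizer-Dm-m-odd}. I would argue by contradiction: if some $\a_i$ or $\b_i$ is a reflection, then every generator lies in $\langle \s\rangle \simeq \C_2$. This contradicts surjectivity of $\varphi$, since $m\geq 3$.

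\emph{Step 1: a reflection among the $\a_i,\b_i$ must equal $\s$.} Write $\s=xy^l$ and suppose $\a_i=xy^k$. Since $\s^{-1}=\s$, relation \eqref{B2} says that $\a_i$ commutes with $\s\a_i\s$. A direct computation in $\D_{2m}$, using $xy^lx=y^{-l}$, gives
\[
\s\a_i\s = xy^{2l-k},
\]
which is again a reflection. By \eqref{eq:centralizer-Dm-m-odd}, a reflection commutes only with $1$ and itself. Hence $xy^k=xy^{2l-k}$, that is $2k\equiv 2l \pmod m$. Since $m$ is odd, this forces $k\equiv l$, so $\a_i=\s$. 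The same computation applied to the second relation in \eqref{B2} shows that if $\b_i$ is a reflection, then $\b_i=\s$.

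\emph{Step 2: the partner of such a generator lies in $\langle\s\rangle$.} First suppose $\a_i=\s$. Then \eqref{B4} reads $[\s,\,\s\b_i\s]=\s^2=1$. So $\s\b_i\s\in C_G(\s)=\{1,\s\}$, and hence $\b_i\in\{1,\s\}$. Next suppose $\b_i=\s$. Then $\s^{-1}\b_i\s^{-1}=\s$, so \eqref{B4} gives $[\a_i,\s]=1$, and therefore $\a_i\in\{1,\s\}$.

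\emph{Step 3: propagate through the relations \eqref{B3}.} Let $u\in\{\a_i,\b_i\}$ be the generator equal to $\s$. For $j<i$, the relations \eqref{B3} give $[u,\,\s\a_j\s]=[u,\,\s\b_j\s]=1$. Since $u=\s$, this means $\s\a_j\s$ and $\s\b_j\s$ lie in $C_G(\s)=\{1,\s\}$, so $\a_j,\b_j\in\{1,\s\}$. For $j>i$, the relations \eqref{B3} with the roles of the indices exchanged give $[\a_j,\,\s u\s]=[\b_j,\,\s u\s]=1$. Here $\s u\s=\s$, so again $\a_j,\b_j\in\{1,\s\}$. Thus all of $\a_1,\b_1,\ldots,\a_g,\b_g,\s$ lie in $\langle \s\rangle$, which has order $2<2m$. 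This contradicts the surjectivity of $\varphi$, so no $\a_i$ or $\b_i$ can be a reflection.

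The main point requiring care is Step 1. It relies on $m$ being odd in two ways: $2$ is invertible modulo $m$, and every reflection has centralizer exactly $\langle xy^i\rangle$. Once these are in place, the remaining steps are bookkeeping with the ordering conventions $j<i$ in \eqref{B3}.
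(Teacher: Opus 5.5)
Your proposal is correct and follows the paper's own argument. The paper likewise combines Lemma \ref{lem:s-reflection-m-odd} with \eqref{B2} to force a reflection $\a_i$ to equal $\s$, then uses \eqref{B3} and \eqref{B4} to place every generator in $C_G(\s)=\langle\s\rangle\simeq\C_2$, contradicting surjectivity. You additionally write out the index bookkeeping and the $\b_i$ case, which the paper leaves implicit.
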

\begin{proof}
Assume by contradiction that $\a_i$ is a reflection. By Lemma \ref{lem:s-reflection-m-odd} we know that $\s$ is also a reflection, hence $\s^{-1}=\s$ and from \eqref{B2} we get $[\a_i, \, \s \a_i \s^{-1}]=1$. Since any conjugate of a reflection is a reflection,  the only possibility is $\a_i = \s \a_i \s^{-1}$, that is $\a_i \in C_G(\s) = \langle \s \rangle$, and so $\a_i=\s$. Using \eqref{B3} and \eqref{B4} we infer
\begin{equation}
\a_j, \, \b_j \in C_G(\s) \quad \textrm{for all} \; \;  j,
\end{equation}
and from this we deduce
\begin{equation}
\D_{2m} = \langle \a_1, \, \b_1, \, \ldots, \a_g, \, \b_g, \,\s \rangle \subseteq C_G(\s) \simeq \C_2,
\end{equation}
a contradiction.
\end{proof}

\begin{proposition} \label{prop:dihedral-m-odd}
Let $m\geq 3$ be an odd integer. Then the number of admissible epimorphisms $\varphi \colon \mathsf{B}_2(\Sigma_g) \to \D_{2m}$ equals $mJ_{2g}(m)$.
\end{proposition}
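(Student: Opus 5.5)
By Lemmas \ref{lem:s-reflection-m-odd} and \ref{lem:ai-bj-rotations-m-odd}, any generating vector $(\a_1,\b_1,\ldots,\a_g,\b_g,\s)$ for $\D_{2m}$ with $m$ odd has $\s = xy^k$ a reflection and all $\a_i,\b_i \in \langle y\rangle$. The plan is to prove the converse: every such tuple is a generating vector precisely when the rotations $\a_1,\b_1,\ldots,\a_g,\b_g$ generate $\langle y\rangle$. The count $mJ_{2g}(m)$ then follows directly. There are $m$ choices for the reflection $\s$, and by Proposition \ref{prop:Jordan_totient_1} there are $J_{2g}(m)$ ordered $2g$-tuples generating $\langle y\rangle\simeq \C_m$.

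\textbf{Generation.} Since $\langle y\rangle$ has index $2$ and $\s\notin\langle y\rangle$, the subgroup $\langle \a_1,\ldots,\b_g,\s\rangle$ equals $\D_{2m}$ if and only if the rotations $\a_i,\b_i$ generate $\langle y\rangle$. The forward direction works as follows. The generated subgroup meets $\langle y\rangle$ in a subgroup $H$ of index $2$ in the whole. Every element of the form $w\s$, with $w$ a word in the rotations, is a reflection. Hence $H$ consists only of words in the rotations together with their $\s$-conjugates, and conjugation by $\s$ inverts rotations. So $H=\langle \a_i,\b_i\rangle$.

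\textbf{Relations.} We must check that \eqref{B2}, \eqref{B3}, \eqref{B4} and \eqref{TR} hold automatically for such tuples.
\begin{itemize}
\item \eqref{B2} and \eqref{B3}: since $\s^{-1}=\s$, the element $\s^{-1}\a_i\s^{-1}=\a_i^{-1}$ is a rotation. Likewise $\s^{-1}\a_j\s$ and $\s^{-1}\b_j\s$ are rotations. Rotations commute, so these commutators are trivial.
\item \eqref{TR}: the left-hand side is a product of commutators of rotations, hence trivial, while the right-hand side is $\s^2=1$.
\item \eqref{B4}: this is $[\a_i,\b_i^{-1}]=1=\s^2$, which also holds.
\end{itemize}
Finally, $o(\s)=2\geq 2$, so the epimorphism is admissible.

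\textbf{Conclusion.} Admissible epimorphisms therefore correspond bijectively to pairs consisting of a reflection $\s$ ($m$ choices) and an ordered generating $2g$-tuple of $\C_m$ ($J_{2g}(m)$ choices), which gives $mJ_{2g}(m)$. The only slightly delicate point is the generation step, i.e.\ showing that the intersection with the rotation subgroup is generated by the $\a_i,\b_i$. The index-$2$ argument above handles it cleanly. Everything else is immediate once the two lemmas are in hand.
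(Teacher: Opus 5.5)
Your proposal is correct and follows essentially the same route as the paper. Both arguments use Lemmas \ref{lem:s-reflection-m-odd} and \ref{lem:ai-bj-rotations-m-odd} to force $\s$ to be a reflection and the $\a_i,\b_i$ to be rotations generating $\langle y\rangle$, check that every such tuple satisfies \eqref{B2}, \eqref{B3}, \eqref{B4}, \eqref{TR}, and count $m\cdot J_{2g}(m)$; your index-$2$ argument only spells out the generation step that the paper leaves as ``easy to see.''
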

\begin{proof}
Observe first that any $(2g+1)$-tuple $\mathscr{V}= (\a_1, \, \b_1, \, \ldots, \a_g, \, \b_g, \,\s)$, where the first $2g$ elements are rotations generating $\langle y \rangle \simeq \C_m$ and $\s$ is a reflection, is a generating vector. Indeed, it is easy to see that $\D_{2m}=\langle \a_1, \, \b_1, \, \ldots, \a_g, \, \b_g, \,\s \rangle$, and moreover $\mathscr{V}$ fulfills $\eqref{B2}$, \eqref{B3}, \eqref{B4} and \eqref{TR} since $\s^2=1$ and every two rotations commute.

Let now $\mathscr{V}= (\a_1, \, \b_1, \, \ldots, \a_g, \, \b_g, \,\s)$ be the generating vector associated with an admissible epimorphism $\varphi \colon \mathsf{B}_2(\Sigma_g) \to \D_{2m}$. By Lemma \ref{lem:s-reflection-m-odd} we know that $\s$ is a reflection. On the other hand, Lemma \ref{lem:ai-bj-rotations-m-odd} implies that, once we fix $\s$, the ordered $2g$-tuple $(\a_1, \, \b_1, \, \ldots, \a_g, \, \b_g)$ generates $\langle y \rangle \simeq \C_m$. 
By Proposition \ref{prop:Jordan_totient_1} the number of such generating $2g$-tuples  is $J_{2g}(m)$; since $\s$ can be any of the $m$ reflections, it follows that the total number of generating vectors for $\D_{2m}$ is $m J_{2g}(m)$.
\end{proof}

\subsection{The case $m$ even.}

We now consider the case where the integer $m$ is even. Assume $m\geq 4$. Then $Z(\mathsf{D}_{2m})=\langle y^{m/2}\rangle \simeq \C_2$, and the conjugacy classes of $\mathsf{D}_{2m}$ are the following:
\begin{equation} 
\begin{split}
& \{1\}, \; \; \{y, \,y^{-1} \}, \; \; \{y^2, \, y^{-2}\},  \ldots , \{ y^{m/2-1 }, \, y^{1-m/2} \}, \\ 
& \{y^{m/2} \}, \\
& \{x, \, xy^2, \, \ldots, xy^{m-2} \}, \; \; \{xy, \, xy^3, \, \ldots, xy^{m-1} \}. 
\end{split}
\end{equation}
There are two differences with respect to the case where $m$ is odd:
\begin{itemize}
\item the presence of the conjugacy class $\{y^{m/2} \}$ of the unique nontrivial central element;
\item the partition of the $m$ reflections in two distinct classes, each containing $m/2$ elements. 
\end{itemize}  
Consequently, the centralizers are:
\begin{equation} \label{eq:centralizer-Dm-m-even}
\begin{split}
&C_G(y^i)  = \langle y \rangle \simeq \C_m \quad \text{if} \; \;  i \notin \{0, \,m/2 \}, \\
&C_G(y^{m/2})  = \D_{2m}, \\
&C_G(xy^i)  = \langle xy^i, \, y^{m/2}\rangle \simeq \C_2 \times  \C_2 \quad \text{for} \;\; 0 \leq i \leq m-1.
\end{split}
\end{equation}
We can now prove the analog of Lemma \ref{lem:s-reflection-m-odd} in this situation.
\begin{lemma} \label{lem:s-reflection-m-even}
Let $m\geq 4$ be an even integer and let $\varphi \colon \B_2(\Sigma_g) \to \D_{2m}$ be an admissible epimorphism, with generating vector $(\a_1, \, \b_1, \, \ldots, \a_g, \, \b_g, \,\s)$.  Then $\s$ is a reflection.
\end{lemma}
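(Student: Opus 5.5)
The plan is to follow the proof of Lemma \ref{lem:s-reflection-m-odd}, adding one step to handle the nontrivial centre $Z(\D_{2m})=\langle y^{m/2}\rangle$. We show first that $\s^2=1$, and then that $\s\neq y^{m/2}$. Since the elements of order $2$ in $\D_{2m}$ are exactly the reflections and $y^{m/2}$, this gives the claim.

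\emph{Step 1: $\s^2=1$.} If some $\a_i$ or $\b_i$ lies in $Z(\D_{2m})$, Proposition \ref{prop_a_b_central_s^2} gives $\s^2=1$ directly. Otherwise every $\a_i$ and $\b_i$ is non-central. Any conjugate of a non-central element is again non-central, so the elements $\s^{-1}\a_j\s$ and $\s^{-1}\b_j\s$ are non-central as well. Remark \ref{rmk:Dihedral_CCT} applies to every $m$, since $\langle y\rangle$ is always an abelian normal subgroup of index $2$. So $\D_{2m}$ is a CCT-group and we may run the odd-case argument verbatim:
\begin{itemize}
\item for $i>1$, relations \eqref{B3} give $[\a_i,\,\s^{-1}\a_{i-1}\s]=[\b_i,\,\s^{-1}\a_{i-1}\s]=1$, and transitivity yields $[\a_i,\b_i]=1$;
\item for $j<g$, relations \eqref{B3} give $[\a_{j+1},\,\s^{-1}\a_j\s]=[\a_{j+1},\,\s^{-1}\b_j\s]=1$, and transitivity yields $[\s^{-1}\a_j\s,\,\s^{-1}\b_j\s]=1$, that is $[\a_j,\b_j]=1$.
\end{itemize}
Hence all $[\a_i,\b_i]$ are trivial, so all $[\a_i,\b_i^{-1}]$ are trivial too, and \eqref{TR} forces $\s^2=1$.

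\emph{Step 2: $\s\neq y^{m/2}$.} Suppose $\s=y^{m/2}$, which is central with $\s^{-2}=1$. Then the relations simplify as follows:
\begin{itemize}
\item $\s^{-1}\a_j\s=\a_j$ and $\s^{-1}\b_j\s=\b_j$, so \eqref{B3} says that $\a_i,\b_i$ commute with $\a_j,\b_j$ whenever $j<i$;
\item $\s^{-1}\b_i\s^{-1}=\b_i$, so \eqref{B4} becomes $[\a_i,\b_i]=\s^2=1$.
\end{itemize}
Together with the centrality of $\s$, this shows that all the generators $\a_1,\b_1,\ldots,\a_g,\b_g,\s$ commute pairwise. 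Therefore $\D_{2m}$ would be abelian, which is false for $m\geq 3$. So $\s$ is a nontrivial element of order $2$ different from $y^{m/2}$, hence a reflection.

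The only delicate point is in Step 1: applying transitivity of commutation requires all elements involved to be non-central. This is exactly why the central case must be split off first, using Proposition \ref{prop_a_b_central_s^2}, and why Step 2 is needed to rule out the central involution, a possibility absent in the odd case.
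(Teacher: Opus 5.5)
Your proposal is correct and is essentially the paper's proof. The paper likewise rules out the central involution $y^{m/2}$ by observing that \eqref{B3} and \eqref{B4} would then make $\D_{2m}$ abelian, and it obtains $\s^2=1$ either from Proposition \ref{prop_a_b_central_s^2} or from the CCT argument of Lemma \ref{lem:s-reflection-m-odd}. You do the two steps in the opposite order and spell out details that the paper leaves implicit, such as the fact that conjugates of non-central elements are non-central.
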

\begin{proof}
We first observe that $\s$ is not a central element. Otherwise, relations  \eqref{B3} and \eqref{B4} would imply
\begin{equation}
\begin{split}
[\a_i, \, \a_j]&=[\b_i, \, \b_j]= [\a_i, \, \b_j]=[\b_i, \, \a_j]=1 \quad \text{for}\;\; j<i\\
[\a_i, \, \b_i]&=1 \quad \text{for all} \;\; i,
\end{split}
\end{equation} 
and so  $G=\langle \a_1,\, \b_1, \ldots, \a_g, \,\b_g, \, \s \rangle$ would be commutative, a contradiction. Thus, we must only show that $\s^2=1$.  If either $\a_i$ or $\b_i$ is a central element for some $i$,  this follows from Proposition  \ref{prop_a_b_central_s^2}. Otherwise, we can exploit the fact that $\D_{2m}$ is a CCT-group, exactly as in the proof of Lemma \ref{lem:s-reflection-m-odd}.
\end{proof}
We  want now to understand what are the possibilities for $\a_1, \, \b_1, \ldots  ,\a_g, \, \b_g$. It turns out that they are always rotations (cf. Lemma \ref{lem:ai-bj-rotations-m-odd}), except possibly in one case. 

\begin{lemma} \label{lem:ai-bi-reflection-n=2-4}
Under the same assumptions of Lemma \emph{\ref{lem:s-reflection-m-even}}, if  $\a_i$ or $\b_i$ is a reflection for some $i$, then $m =4$. 
\end{lemma}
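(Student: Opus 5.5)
Suppose that $\a_i$ is a reflection for some $i$ (the case of $\b_i$ is treated the same way, with the roles of the two generators in each pair interchanged). By Lemma \ref{lem:s-reflection-m-even}, the element $\s$ is a reflection, so $\s^{-1}=\s$ and $\s^2=1$. Relation \eqref{B2} then gives $[\a_i, \, \s\a_i\s]=1$. The element $\s\a_i\s$ is again a reflection, and two reflections commute if and only if their product $\a_i \cdot \s\a_i\s$ is a rotation of order at most $2$. So we get
\begin{equation}
\s\a_i\s \in \{\a_i, \; \a_i y^{m/2}\},
\end{equation}
which means that $\s\a_i$ is a rotation lying in $\{1, y^{m/2}\}$, i.e. $\a_i \in \{\s, \, \s y^{m/2}\} \subseteq C_G(\s)=\langle \s, \, y^{m/2}\rangle \simeq \C_2\times\C_2$, by \eqref{eq:centralizer-Dm-m-even}.

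Next I would use \eqref{B3} and \eqref{B4} to transfer this constraint to all the other generators. Since $\s^2=1$, conjugation by $\s^{-1}$ is the same as conjugation by $\s$, and $\s\a_i\s=\a_i$ because $\a_i$ commutes with $\s$. Hence \eqref{B3} says that every $\a_j$, $\b_j$ with $j<i$ commutes with $\s\a_i\s=\a_i$, and every $\a_j$, $\b_j$ with $j>i$ satisfies $[\a_j, \, \s\a_i\s]=1$, which again says it commutes with $\a_i$. Relation \eqref{B4} with $\s^2=1$ gives $[\a_i, \, \s\b_i\s]=1$, so $\s\b_i\s$ commutes with $\a_i$.

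Therefore every generator other than $\s$, possibly after conjugating by $\s$, lies in $C_G(\a_i)=\langle \a_i, \, y^{m/2}\rangle$. This group equals $\langle \s, \, y^{m/2}\rangle$, and conjugation by $\s$ preserves it. So every generator lies in $\langle \s, \, y^{m/2}\rangle$, and
\begin{equation}
\D_{2m} = \langle \a_1, \, \b_1, \ldots, \a_g, \, \b_g, \, \s\rangle \subseteq \langle \s, \, y^{m/2}\rangle,
\end{equation}
a group of order $4$. This is impossible for $m\geq 4$.

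This argument gives a contradiction for every $m\geq 4$, which is stronger than the conclusion $m=4$ required by the statement. This makes me suspect an error, and the step I would recheck most carefully is the claim $\a_i\in C_G(\s)$. If $\s\a_i\s=\a_i y^{m/2}$, then $\a_i$ does not commute with $\s$; instead $\s\a_i\s^{-1}=\a_i y^{m/2}$, so the product $\s\a_i$ is a rotation $r$ with $r^2=y^{m/2}$. Such a rotation has order $4$, and requiring $\a_i$ to be a reflection in this case is exactly where $m=4$ should come from.

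So the real proof should split into two cases. In the case $\s\a_i\s=\a_i$, the argument above applies verbatim and gives a contradiction. In the case $\s\a_i\s=\a_i y^{m/2}$, the rotation $r=\s\a_i$ has order $4$. Here I would show, using \eqref{B3} and \eqref{B4} as above, that all generators lie in $\langle \s, \, r\rangle \simeq \D_8$, whose rotation subgroup $\langle r\rangle$ has order $4$. Generation of $\D_{2m}$ then forces $\langle r\rangle=\langle y\rangle$, hence $m=4$. The main obstacle is this second case: conjugation by $\s$ no longer fixes $\a_i$, so the centralizer bookkeeping becomes subtler. I would handle it by noting that $C_G(\a_i)$ and $C_G(\a_i y^{m/2})$ are both contained in $\langle \s, \, r\rangle$, and that conjugation by $\s$ normalizes this subgroup.
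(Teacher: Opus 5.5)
Your corrected two-case argument is right and is essentially the paper's proof. After normalizing $\s=x$, the paper's Case 1 ($y^{2k}=1$) is your case $\s\a_i\s=\a_i$, which puts all generators in $\langle \s, y^{m/2}\rangle\simeq\C_2\times\C_2$. Its Case 2 ($y^{2k}=y^{m/2}$) is your case $\s\a_i\s=\a_i y^{m/2}$, which gives $\D_{2m}=\langle x, y^{m/4}\rangle\simeq\D_8$, i.e.\ your $\langle \s, r\rangle$ with $r=\s\a_i=y^{m/4}$.

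In the write-up, discard the first paragraph's inference $\s\a_i\s=\a_i y^{m/2}\Rightarrow \s\a_i\in\{1,y^{m/2}\}$, which is false as you noticed. Also note that the second case needs no subtler bookkeeping: $C_G(\a_i)=C_G(\s\a_i\s)=\{1,\a_i,y^{m/2},\a_i y^{m/2}\}\subseteq\langle \s, r\rangle$, because $\a_i=\s r$ and $r^2=y^{m/2}$.
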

\begin{proof}
Assume that $\a_i$ is a reflection for some $i$, say $\a_i=xy^k$ (the case where $\b_i$ is a reflection can be treated in the same way). By Lemma \ref{lem:s-reflection-m-even} we may furthermore assume that, up to automorphisms of $\D_{2m}$, we have $\s=x$. Then relation $\eqref{B2}$ becomes
\begin{equation}
xy^k x xy^k x = x xy^k x xy^k,
\end{equation}
hence $y^{-2k} = y^{2k}$, which means $(y^{2k})^2=1$. Now there are two possibilities.
\medskip

\emph{Case 1}. $y^{2k}=1$. This implies $k \in \{0, \, m/2 \}$, hence
\begin{equation}\label{Case-1-a_i-b_i-reflection}
\text{either} \quad \a_i=  \s  \a_i \s=x  \quad \text{or} \quad \a_i=  \s  \a_i \s= xy^{m/2}.
\end{equation}
Using \eqref{B3} and \eqref{B4}, we deduce
\begin{equation}
\begin{split}
[\a_i, \; \s\a_j\s] &= [\a_i, \; \s\b_j\s] =1 \quad \text{for} \; \; j<i \\
[\a_j, \; \s\a_i\s] &= [\b_j, \; \s\a_i\s] =1 \quad \text{for} \; \; j>i \\
[\s \a_i\s, \; b_i]& =1,
\end{split}
\end{equation}
and so in both cases from \eqref{Case-1-a_i-b_i-reflection} we obtain
\begin{equation}
\a_j, \, \b_j \in C_G(\a_i) = C_G(\s \a_i \s)= \langle x, \, y^{m/2} \rangle \simeq \C_2 \times \C_2 \quad \text{for all } j.
\end{equation}
This implies
\begin{equation}
\D_{2m} = \langle \a_1, \, \b_1, \, \ldots, \a_g, \, \b_g, \,\s \rangle \simeq   \C_2 \times \C_2, 
\end{equation}
a contradiction since $m \geq 4$.
\medskip

\emph{Case 2}. $y^{2k}=y^{m/2}$. This implies $k\in \{m/4, \, 3m/4\}$, hence
\begin{equation}\label{Case-2-a_i-b_i-reflection}
\text{either} \quad (\a_i, \, \s  \a_i \s)=(xy^{m/4}, \, xy^{3m/4})  \quad \text{or} \quad (\a_i, \, \s  \a_i \s)=(xy^{3m/4}, \, xy^{m/4}).
\end{equation}
So, using \eqref{B3} and \eqref{B4} as above, we obtain in both cases from \eqref{Case-2-a_i-b_i-reflection} 
    \begin{equation}
    \a_j, \, \b_j \in C_G(\a_i) = C_G(\s \a_i \s)= \langle   xy^{m/4}, \, y^{m/2} \rangle \simeq \C_2 \times \C_2 \quad \text{for all } j.
    \end{equation}
    This implies 
    \begin{equation}
    \D_{2m}  = \langle \a_1, \, \b_1, \, \ldots, \a_g, \, \b_g, \,\s \rangle
     = \langle   xy^{m/4}, \, y^{m/2}, \, x \rangle = \langle y^{m/4}, \, x \rangle \simeq \D_8,
    \end{equation}
    hence $m=4$.
\end{proof}
We are now in a position to state the analog of Proposition \ref{prop:dihedral-m-odd} when $m$ is even.

\begin{proposition} \label{prop:dihedral-m-even}
Let $m\geq 2$ be an even integer. Then the number of admissible epimorphisms $\varphi \colon \mathsf{B}_2(\Sigma_g) \to \D_{2m}$ equals $\varepsilon \cdot mJ_{2g}(m)$, where 
\begin{equation} \label{eq:epsilon}
\varepsilon =
\begin{cases}
1 &\text{ if } \, m \notin \{2, \, 4\} \\
2 &\text{ if } \, m=4\\
3 \cdot 2^{2g-1} & \text{ if } \, m=2.
\end{cases}
\end{equation}
\end{proposition}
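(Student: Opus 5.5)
The plan is to split according to whether $m=2$, $m=4$ or $m\geq 6$, and in each case to count generating vectors directly. The case $m\geq 4$ rests on Lemmas \ref{lem:s-reflection-m-even} and \ref{lem:ai-bi-reflection-n=2-4}. The case $m=2$ is handled as an abelian group via Example \ref{ex:generating vector_G abelian}. For $m\geq 4$ I will first list the generating vectors in which every $\a_i,\b_i$ is a rotation. I will then add, for $m=4$ only, the family in which at least one $\a_i$ or $\b_i$ is a reflection.

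\emph{Rotational generating vectors ($m\geq 4$).} By Lemma \ref{lem:s-reflection-m-even}, $\s$ is one of the $m$ reflections. Suppose all $\a_i,\b_i$ are rotations, and let $H$ be the subgroup of $\langle y\rangle$ they generate. Then $\langle H,\s\rangle=H\rtimes\langle \s\rangle$, so this is all of $\D_{2m}$ if and only if $H=\langle y\rangle$. Conversely, take any such tuple with $\s$ a reflection and the rotations generating $\langle y\rangle$. Then $\s^{\pm1}\a\s^{\pm1}$ is again a rotation and $\s^2=1$, so \eqref{B2}, \eqref{B3}, \eqref{B4} and \eqref{TR} all reduce to commutation of rotations and hold. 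By Proposition \ref{prop:Jordan_totient_1} this gives exactly $mJ_{2g}(m)$ vectors. For $m\geq 6$, Lemma \ref{lem:ai-bi-reflection-n=2-4} shows there are no others, which gives $\varepsilon=1$.

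\emph{The case $m=4$.} Here I must count the vectors in which some $\a_i$ (or $\b_i$) is a reflection. Fix $\s$; by transitivity of $\operatorname{Aut}(\D_8)$ on reflections we may take $\s=x$ and multiply by $4$ at the end. Case 1 of the proof of Lemma \ref{lem:ai-bi-reflection-n=2-4} is impossible. Case 2 forces every $\a_j,\b_j$ to lie in the Klein group $V=\{1,y^2,xy,xy^3\}$. I will show the converse: every tuple in $V^{2g}$ containing at least one reflection is a generating vector.
\begin{itemize}
\item Conjugation by $x$ preserves $V$ (it swaps $xy$ and $xy^3$), and $V$ is abelian, so every commutator in \eqref{B2}, \eqref{B3}, \eqref{B4} and \eqref{TR} is trivial, matching $\s^2=1$.
\item $\langle xy, x\rangle=\D_8$, so the tuple together with $\s$ generates the whole group.
\end{itemize}
This gives $4(4^{2g}-2^{2g})=4J_{2g}(4)$ further vectors, disjoint from the rotational ones. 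The total is $2\cdot 4J_{2g}(4)$, so $\varepsilon=2$.

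\emph{The case $m=2$.} Here $\D_4\simeq\C_2\times\C_2$. By Example \ref{ex:generating vector_G abelian}, generating vectors are exactly the tuples with $\s$ one of the $3$ involutions and $\langle \a_1,\ldots,\b_g,\s\rangle=\D_4$. The generation condition is equivalent to the images of the $\a_i,\b_i$ generating $\D_4/\langle\s\rangle\simeq\C_2$, exactly as in Proposition \ref{prop:cyclic_generators}. There are $(2^{2g}-1)$ such tuples in the quotient, each with $2^{2g}$ lifts. This gives
\[
3\cdot 2^{2g}(2^{2g}-1)=3\cdot 2^{2g-1}\cdot 2\,J_{2g}(2),
\]
so $\varepsilon=3\cdot 2^{2g-1}$.

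The main obstacle is the $m=4$ case. There one must check that Case 2 of Lemma \ref{lem:ai-bi-reflection-n=2-4} gives exactly the set $V^{2g}$ minus the all-rotation tuples, with no further constraints from \eqref{B2}–\eqref{TR}. One must also check that the orbit argument under $\operatorname{Aut}(\D_8)$ correctly accounts for the two conjugacy classes of reflections.
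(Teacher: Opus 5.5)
Your proposal is correct and follows essentially the same route as the paper. The rotational vectors are counted as in the odd case, the extra family for $m=4$ comes from Case 2 of Lemma \ref{lem:ai-bi-reflection-n=2-4} (tuples in $\{1,y^2,xy,xy^3\}^{2g}$ with $\s=x$, multiplied by $4$), and $m=2$ is a direct count in $\C_2\times\C_2$. The only differences are cosmetic: for $m=2$ you count lifts from $\D_4/\langle \s\rangle$ as in Proposition \ref{prop:cyclic_generators} rather than complementing $\{1,x\}^{2g}$, and you state explicitly the transitivity of $\operatorname{Aut}(\D_8)$ on reflections, which the paper uses without saying so.
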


\begin{proof}
Assume first $m \notin \{2, \, 4\}$ and let $\mathscr{V}= (\a_1, \, \b_1, \, \ldots, \a_g, \, \b_g, \,\s)$ be the generating vector associated with an admissible epimorphism $\varphi \colon \mathsf{B}_2(\Sigma_g) \to \D_{2m}$. Then Lemma \ref{lem:ai-bi-reflection-n=2-4} shows that all the elements $\a_i$ and $\b_i$ are rotations; on the other hand, by Lemma \ref{lem:s-reflection-m-even} we know that $\s$ is a reflection. Hence, by the same argument used in the proof of  Proposition \ref{prop:dihedral-m-odd}, we can conclude that the number of generating vectors is $m J_{2g}(m)$. It remains to consider the cases $m=2$ and $m=4$.
\smallskip

\emph{Case $m=2$}. We have
\begin{equation}
\D_{4} \simeq \C_2 \times \C_2 =  \langle x, \, y \; \; | \; \; x^2=y^2= [x, \, y]=1 \rangle.
\end{equation}
Since $\D_4$ is abelian, a $(2g+1)$-tuple  $\mathscr{V}= (\a_1, \, \b_1, \, \ldots, \a_g, \, \b_g, \,\s)$ is generating vector if and only if $\a_1, \, \b_1, \, \ldots, \a_g, \, \b_g, \,\s$ generate $\D_4$.  
Up to automorphisms, we can take $\s=x$, and so we are asking that the elements $\a_1, \, \b_1, \ldots, \a_g, \, \b_g, \, x$ generate the whole group. This is the same as requesting that the $2g$-tuple $(\a_1, \, \b_1, \ldots, \a_g, \, \b_g)$ contains at least an element not belonging to the set $\{1, \, x\}$. Since the total number of $2g$-tuples in $\D_4$ is $4^{2g}$ and those containing only the elements $1, \, x$ are $2^{2g}$, we get $4^{2g} - 2^{2g} = 2^{2g} (2^{2g} -1)$ occurrences. This number must be multiplied by $3$ since we have three possibilities for $\s$, hence the total number of generating vectors is 
\begin{equation}
3 \cdot 2^{2g} (2^{2g} -1) = 3 \cdot 2^{2g-1} \cdot 2J_{2g}(2).
\end{equation} 

\smallskip

\emph{Case $m=4$}. We have
\begin{equation}
\D_{8} =  \langle x, \, y \; \; | \; \; x^2=y^4=1, \; | \; xyx^{-1}= y^{-1} \rangle.
\end{equation}
By Lemma \ref{lem:s-reflection-m-even}  if $\mathscr{V} = (\a_1, \, \b_1, \, \ldots, \a_g, \, \b_g, \,\s)$ is a generating vector for $\D_8$, then $\s$ is a reflection. Hence, by the same argument used in the proof of  Proposition \ref{prop:dihedral-m-odd}, the number $4J_{2g}(4)$ counts the number of generating vectors $(\a_1, \, \b_1, \, \ldots, \a_g, \, \b_g, \,\s)$ for $\D_8$ such that 
\begin{equation}
\a_i, \; \b_i \in \langle y \rangle \simeq \C_4 \quad \text{for all } i. 
\end{equation}  
However, as we have seen in the proof of Lemma \ref{lem:ai-bi-reflection-n=2-4}, in the case $m=4$ we also have another possibility, namely (assuming $\s=x$, up to automorphisms)
 \begin{equation} \label{eq:D8}
\a_i, \; \b_i \in C_G(xy)=\langle xy, \, y^2 \rangle \simeq \C_2 \times \C_2 \quad \text{for all } i. 
\end{equation}  
It is immediate to check that, if a $(2g+1)$-tuple $\mathscr{V}=(\a_1, \, \b_1, \ldots, \a_g, \, \b_g, \, x)$ fulfills \eqref{eq:D8}, then all the relations \eqref{B2}, \eqref{B3}, \eqref{B4}, \eqref{TR} are satisfied. Thus, such a tuple  $\mathscr{V}$ is a generating vector if and only if it generates $\D_{2m}$. This is equivalent to requiring that the $2g$-tuple $(\a_1, \, \b_1, \ldots, \a_g, \, \b_g)$ contains at least an element not belonging to the set $\{1, \, y^2\}$.  Since the total number of $2g$-tuples in $C_G(xy)$ is $4^{2g}$ and those containing only the elements $1, \, y^2$ are $2^{2g}$, we get $4^{2g} - 2^{2g} = 2^{2g} (2^{2g} -1)$ occurrences. This number must be multiplied by $4$, since $\s$ may be any reflection of $\D_8$, obtaining 
\begin{equation}
4(4^{2g} - 2^{2g}) = 4^{2g+1} \left( 1- \frac{1}{2^{2g}} \right) = 4J_{2g}(4)
\end{equation} 
occurrences, which count the number of generating vectors $(\a_1, \, \b_1, \, \ldots, \a_g, \, \b_g, \,\s)$ for $\D_8$ such that $\a_i$ or $\b_i$ is a reflection for some $i$. Summing up, the total number of generating vectors for $\D_8$ is 
\begin{equation}
4J_{2g}(4)+ 4J_{2g}(4)=2 \cdot 4J_{2g}(4).
\end{equation}
\end{proof}

Putting together Propositions \ref{prop:dihedral-m-odd} and \ref{prop:dihedral-m-even}, we finally get

\begin{theorem} \label{thm:dihedral}
Let $m\geq 2$ be a positive integer. Then the number $N_g(\D_{2m})$ of admissible epimorphisms $\varphi \colon \mathsf{B}_2(\Sigma_g) \to \D_{2m}$ is given by 
\begin{equation} \label{eq:epsilon_1}
N_g(\D_{2m}) = \varepsilon \cdot mJ_{2g}(m), \quad 
\mathrm{where } \; \;  \varepsilon =
\begin{cases}
1 &\text{if } \, m \notin \{2, \, 4\} \\
2 &\text{if } \, m=4 \\
3 \cdot 2^{2g-1} & \text{if } \, m=2.
\end{cases}
\end{equation}
\end{theorem}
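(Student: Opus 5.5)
The statement is just the assembly of the two parity cases already settled, so the proof is a case split on the parity of $m$.

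\emph{Case $m$ odd.} Then $m \geq 3$, since $m \geq 2$. Proposition \ref{prop:dihedral-m-odd} gives $N_g(\D_{2m}) = mJ_{2g}(m)$. This agrees with the formula in the statement, because $m \notin \{2,\,4\}$ forces $\varepsilon = 1$. The ingredients behind that proposition are Lemma \ref{lem:s-reflection-m-odd}, which shows $\s$ is a reflection via the CCT property in Remark \ref{rmk:Dihedral_CCT}, and Lemma \ref{lem:ai-bj-rotations-m-odd}, which shows all $\a_i$, $\b_i$ are rotations. Together they reduce the count to choosing one of the $m$ reflections and a generating $2g$-tuple of $\langle y \rangle \simeq \C_m$, counted by Proposition \ref{prop:Jordan_totient_1}.

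\emph{Case $m$ even.} Proposition \ref{prop:dihedral-m-even} gives exactly $\varepsilon \cdot mJ_{2g}(m)$, with the same three-valued $\varepsilon$ as in \eqref{eq:epsilon_1}. It splits into three subcases:
\begin{itemize}
\item For $m \geq 6$, Lemmas \ref{lem:s-reflection-m-even} and \ref{lem:ai-bi-reflection-n=2-4} force $\s$ to be a reflection and the $\a_i$, $\b_i$ to be rotations.
\item For $m = 4$ there is an extra family with all $\a_i,\,\b_i \in C_G(xy)$. It contributes another $4J_{2g}(4)$, so $\varepsilon = 2$.
\item For $m = 2$ the group is abelian, and the direct count $3 \cdot 2^{2g}(2^{2g}-1)$ is rewritten as $3\cdot 2^{2g-1}\cdot 2J_{2g}(2)$.
\end{itemize}

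Since the two parity cases are exhaustive and the formulas agree, the theorem follows.

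The only step I would actually check is the bookkeeping at $m = 2$ and $m = 4$. For $m = 2$, we have $J_{2g}(2) = 2^{2g}-1$, so $2^{2g}(2^{2g}-1) = 2^{2g-1}\cdot 2J_{2g}(2)$, and the factor $3 \cdot 2^{2g-1}$ is consistent. For $m = 4$, we have $J_{2g}(4) = 4^{2g}(1 - 2^{-2g}) = 4^{2g} - 2^{2g}$. The reflection-type family then contributes $4(4^{2g} - 2^{2g}) = 4J_{2g}(4)$ vectors, which doubles the rotation-type count and gives $\varepsilon = 2$.
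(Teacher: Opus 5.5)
Your proposal is correct and matches the paper, which proves the theorem in one line by combining Proposition~\ref{prop:dihedral-m-odd} (for $m$ odd, where $m\geq 3$ forces $\varepsilon=1$) with Proposition~\ref{prop:dihedral-m-even} (for $m$ even). Your extra bookkeeping at $m=2$ and $m=4$, using $J_{2g}(2)=2^{2g}-1$ and $J_{2g}(4)=4^{2g}-2^{2g}$, is accurate and agrees with the counts in the proof of Proposition~\ref{prop:dihedral-m-even}.
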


Looking at  Remark \ref{rmk:action_auto} and using \eqref{eq:auto_dihedral}, we now obtain
\begin{corollary} \label{cor:dihedral_up_to_auto}
The number $N^{\circ}_g(\mathsf{D}_{2m})$ of admissible epimorphisms $\varphi \colon \B_2(\Sigma_g) \to \D_{2m}$, up to automorphisms of $\mathsf{D}_{2m}$, is 
\begin{equation}
N^{\circ}_g(\mathsf{D}_{2m}) = \frac{1}{|\operatorname{Aut}(\D_{2m})|}N_g(\mathsf{D}_{2m}) = 
\begin{cases}
J_{2g}(m)/\phi(m) & \text{if } \, m \notin\{2, \, 4\} \\
J_{2g}(4) & \text{if } \, m=4 \\
2^{2g-1} J_{2g}(2) & \text{if } \, m=2.
\end{cases}
\end{equation}
\end{corollary}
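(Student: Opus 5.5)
The statement is Corollary \ref{cor:dihedral_up_to_auto}. I would obtain it by dividing the count $N_g(\D_{2m})$ of Theorem \ref{thm:dihedral} by $|\operatorname{Aut}(\D_{2m})|$ from \eqref{eq:auto_dihedral}. By Remark \ref{rmk:action_auto}, $\operatorname{Aut}(\D_{2m})$ acts freely on the set of generating vectors. An automorphism fixing all of $\a_i,\b_i,\s$ fixes a generating set, so it is the identity. Hence every orbit has size exactly $|\operatorname{Aut}(\D_{2m})|$, and \eqref{eq:N-N^circ} applies directly.

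The computation splits into three cases.
\begin{itemize}
\item If $m\geq 3$ and $m\neq 4$, then $\varepsilon=1$ and $|\operatorname{Aut}(\D_{2m})|=m\phi(m)$. The quotient is $mJ_{2g}(m)/(m\phi(m))=J_{2g}(m)/\phi(m)$.
\item If $m=4$, then $\varepsilon=2$ and $|\operatorname{Aut}(\D_8)|=4\phi(4)=8$. The quotient is $2\cdot 4J_{2g}(4)/8=J_{2g}(4)$.
\item If $m=2$, then $\varepsilon=3\cdot 2^{2g-1}$ and $|\operatorname{Aut}(\D_4)|=6$. The quotient is $3\cdot 2^{2g-1}\cdot 2J_{2g}(2)/6=2^{2g-1}J_{2g}(2)$.
\end{itemize}

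As a consistency check in the generic case, $J_{2g}(m)/\phi(m)$ is an integer. This follows because $\operatorname{Aut}(\C_m)$ acts freely on generating $2g$-tuples of $\C_m$, as in \eqref{eq:k-tuples up to auto}.

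The only step needing care is the value of $|\operatorname{Aut}(\D_{2m})|$, which is recorded in \eqref{eq:auto_dihedral}. For $m\geq 3$, each automorphism is determined by the image $xy^i$ of $x$ and the image $y^j$ of $y$ with $\gcd(j,m)=1$, giving $m\phi(m)$ choices. For $m=2$, $\operatorname{Aut}(\D_4)\simeq S_3$ has order $6$. With these values in hand, the rest is arithmetic.
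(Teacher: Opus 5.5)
Your proposal is correct and follows the paper's argument: divide $N_g(\D_{2m})$ from Theorem \ref{thm:dihedral} by $|\operatorname{Aut}(\D_{2m})|$ from \eqref{eq:auto_dihedral}, using the free action in Remark \ref{rmk:action_auto}. Your arithmetic in the three cases $m\notin\{2,4\}$, $m=4$ and $m=2$ is right, and your extra justifications of the free action, of the automorphism count and of the integrality of $J_{2g}(m)/\phi(m)$ are sound.
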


\section{Extra-special admissible quotients of $\mathsf{B}_2(\Sigma_g)$} \label{sec:_extraspecial_quotients}

For the following classical notion, see \cite[p. 183]{Gor07} and \cite[p. 123]{Is08}.

\begin{definition} \label{def:extra-special}
Let $p$ be a prime number. A finite $p$-group $G$ is said to be \emph{extra-special} if its center $Z(G)$ is cyclic of order $p$ and the factor group $G/Z(G)$ is a nontrivial elementary abelian $p$-group.
\end{definition}

An elementary abelian $p$-group may be regarded as a finite-dimensional
vector space over the finite field $\mathbb{F}_p$. In particular, it has the form
$V \simeq (\mathbb{F}_p)^{\dim V}$, and $G$ therefore arises as a central
extension
\begin{equation} \label{eq:extension-extra}
  1 \longrightarrow \C_p \longrightarrow G \longrightarrow V \longrightarrow 1.
\end{equation}
Since $V$ is abelian, it follows that the derived subgroup of $G$
is contained in $\C_p$.  As $G$ is non-abelian, we necessarily have
$[G, \, G]=\C_p$, so the commutator subgroup coincides with the center.
Moreover, the extension in \eqref{eq:extension-extra} does not split.
Indeed, a splitting would imply that $G$ is isomorphic to the direct
product $\C_p \times V$, which is abelian, a contradiction.

\begin{remark}\label{Rem:symplectic-form}
If $G$ is extra-special, then $Z(G)=\langle z \rangle\simeq \mathbb{F}_p$, and  we can define a bilinear form $V \times V \to \mathbb{F}_p$ as follows: for every $v_1, \, v_2 \in V$, we set $(v_1, \, v_2)=k$, where $[z_1, \, z_2]=z^k$ and $z_i$ is any lift of $v_i$ in $G$. This turns out to be a symplectic form on $V$, hence
$\dim V$ is even and $|G|=p^{\dim V +1}$ is an odd power of $p$.
\end{remark}

For every prime number $p$, there are precisely two isomorphism classes
$M(p)$, $N(p)$ of non-abelian groups of order $p^3$, namely
\begin{equation*}
  \begin{split}
    M(p)& = \langle x, \, y, \, z \; | \; x^p=y^p=1, \;
    z^p=1,  \; [x, \,
    z]=[y,\, z]=1, \;  [x, \, y]=z^{-1} \rangle \\
    N(p)& = \langle x, \, y, \, z \; | \; x^p=y^p=z, \;
    z^p=1,  \; [x, \,
    z]=[y,\, z]=1, \;  [x, \, y]=z^{-1} \rangle \\
  \end{split}
\end{equation*}
and both of them are  extra-special, see \cite[Theorem 5.1 of
Chapter 5]{Gor07}.

For odd primes $p$, the groups $M(p)$ and $N(p)$ are distinguished by
their exponents, equal to $p$ and $p^2$, respectively. If $p=2$, then $M(2) \simeq \D_8$ and  $N(2) \simeq \mathsf{Q}_8$.

\begin{remark}
Let $g\geq 2 $ be a fixed positive integer, and let $G$ be an extra-special $p$-group of order $p^{2m+1}$, where $2m=\dim V$. Suppose that there exists an admissible epimorphism $\varphi\colon \B_2(\Sigma_g) \to G$ with generating vector $(\a_1, \, \b_1, \, \ldots, \a_g, \, \b_g, \,\s)$. Since the images of $\a_1, \, \b_1, \, \ldots, \a_g, \, \b_g, \,\s$ in $V$ generate $V$ as an $\mathbb{F}_p$-vector space, it follows that $2g+1\geq 2m$, hence  $m\leq g$. In the sequel, we will focus only on the maximal case $m=g$.
\end{remark}

The following classification of extra-special $p$-groups, which uses $M(p)$ and $N(p)$ as building blocks,  can be found in \cite[Section 5 of Chapter 5]{Gor07} and in \cite[Section 3]{Pol22}.

\begin{proposition} \label{prop:extra-special-groups}
  If $g \geq 2$ is a positive integer and $p$ is a prime number, there
  are exactly two isomorphism classes of extra-special $p$-groups
  of order
  $p^{2g+1}$, that can be described as follows. 
  \begin{itemize}
    \item The central product $\mathsf{H}_{2g+1}(\mathbb{F}_p)$
      of $g$ copies
      of $M(p)$, having presentation
      \begin{equation} \label{eq:H5}
	\begin{split}
	  \mathsf{H}_{2g+1}(\mathbb{F}_p)
	  = \langle \, & x_1, \,
	  y_1,
	  \ldots, x_g,\, y_g,
	  \, z  \; |  \; x_{i}^p =
	  y_{i}^p=z^p=1, \\
	  & [x_{i}, \, z]  =
	  [y_{i}, \, z]= 1, \\
	  & [x_i, \, x_j]=
	  [y_i, \, y_j] =
	  1, \\
	  & [x_{i}, \,y_{j}]
	  =z^{- \delta_{ij}} \, \rangle.
	\end{split}
      \end{equation}
      If $p$ is odd, this group has exponent $p$ and is isomorphic to the
      matrix Heisenberg group $\mathcal{H}_{2g+1}(\mathbb{F}_p) \subset
      \mathsf{GL}(g+2, \, \mathbb{F}_p)$ of dimension $2g+1$ over the
      field $\mathbb{F}_p$.
    \item The central product $\mathsf{G}_{2g+1}(\mathbb{F}_p)$
      of $g-1$ copies
      of $M(p)$ and one copy of $N(p)$,  having presentation
      \begin{equation} \label{eq:G5}
	\begin{split}
	  \mathsf{G}_{2g+1}(\mathbb{F}_p)
	  = \langle \, & x_1, \,
	  y_1,
	  \ldots, x_g,\, y_g,
	  \, z \; | \;  x_{g}^p =
	  y_{g}^p=z, \\
	  & x_{1}^p = y_{1}^p=
	  \ldots = x_{g-1}^p =
	  y_{g-1}^p=z^p=1, \\
	  & [x_{i}, \, z]  =
	  [y_{i}, \, z]= 1, \\
	  & [x_i, \, x_j]=
	  [y_i, \, y_j] =
	  1, \\
	  & [x_{i}, \,y_{j}]
	  =z^{- \delta_{ij}} \, \rangle.
	\end{split}
      \end{equation}
      If $p$ is odd, this group has exponent $p^2$.
  \end{itemize}
\end{proposition}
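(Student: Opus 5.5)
The strategy is the classical one (Gorenstein, Chapter 5, \S 5): translate the structure of $G$ into linear algebra on $V=G/Z(G)\simeq(\mathbb{F}_p)^{2g}$, equipped with the nondegenerate symplectic form of Remark \ref{Rem:symplectic-form}. Nondegeneracy holds because the radical of the form is exactly $Z(G)/Z(G)=0$. The proof has three steps: a decomposition step, a reduction step showing there are at most two classes, and an invariant step showing there are exactly two.

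\emph{Decomposition.} Choose a symplectic basis $\bar x_1,\bar y_1,\ldots,\bar x_g,\bar y_g$ of $V$, normalised so that $(\bar x_i,\bar y_j)=-\delta_{ij}$ and $(\bar x_i,\bar x_j)=(\bar y_i,\bar y_j)=0$. Lift it to elements $x_i,y_i\in G$ and fix a generator $z$ of $Z(G)$.
\begin{itemize}
\item Since the commutator of two lifts depends only on their images in $V$, the commutator relations of \eqref{eq:H5} hold automatically.
\item Each $E_i=\langle x_i,y_i\rangle$ is non-abelian of order $p^3$ with centre $\langle z\rangle$, so $E_i\simeq M(p)$ or $E_i\simeq N(p)$.
\item Distinct $E_i$ commute elementwise, so $G$ is the central product $E_1\circ\cdots\circ E_g$. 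The order count $p^{2g+1}$ shows there are no further identifications.
\item Each $x_i^p,\,y_i^p$ lies in $Z(G)$. Since $G$ has class $2$, we have $(x_iz^a)^p=x_i^pz^{pa}=x_i^p$, so the lift cannot change these powers. The change of basis must therefore be made on $V$ itself, and the relevant invariant is the $p$-power map.
\end{itemize}

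\emph{Reduction to two classes.} For $p$ odd, the identity $(uv)^p=u^pv^p[v,u]^{\binom p2}$ together with $p\mid\binom p2$ shows that $u\mapsto u^p$ is a homomorphism $G\to Z(G)$ factoring through $V$. So it is a linear functional $\lambda\colon V\to\mathbb{F}_p$.
\begin{itemize}
\item If $\lambda=0$, any symplectic basis gives the presentation \eqref{eq:H5}.
\item If $\lambda\neq0$, then $\ker\lambda$ is a hyperplane. Choose a symplectic basis with $\bar x_1,\bar y_1,\ldots,\bar x_{g-1},\bar y_{g-1},\bar x_g\in\ker\lambda$ and $\bar y_g\notin\ker\lambda$. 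This is possible because $\ker\lambda=w^{\perp}$ for some nonzero $w$: take $\bar x_g=w$ and extend. Then rescale and adjust $\bar y_g$ (and $\bar x_g$ correspondingly, keeping the pairing) so that $y_g^p=z$ and $x_g^p=z$. Here the freedom is to replace $\bar x_g$ by $\bar x_g+c\,\bar y_g$, which is again in $\bar y_g^{\perp}$ relative to the other pairs. This yields \eqref{eq:G5}.
\end{itemize}

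For $p=2$, the squaring map $q(v)=u^2$, with $u$ a lift of $v$, is a quadratic form on $V$ whose polarisation is the symplectic form. The classification of nondegenerate quadratic forms over $\mathbb{F}_2$ says there are exactly two classes, distinguished by the Arf invariant. Choosing a basis adapted to each class gives \eqref{eq:H5} or \eqref{eq:G5}. Equivalently, one can use the isomorphisms $\mathsf{Q}_8\circ\mathsf{Q}_8\simeq\D_8\circ\D_8$ and $\D_8\circ\mathsf{Q}_8\simeq\mathsf{Q}_8\circ\mathsf{Q}_8\circ\ldots$ to push all $N(2)$ factors into at most one.

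\emph{Distinguishing the two classes.}
\begin{itemize}
\item For $p$ odd, $\mathsf{H}_{2g+1}$ has $\lambda=0$, so it has exponent $p$, while $\mathsf{G}_{2g+1}$ has exponent $p^2$.
\item For $p=2$, the number of elements of $G$ squaring to $1$ is determined by the number of zeros of $q$. That number is $2^{2g-1}\pm2^{g-1}$ according to the Arf invariant, so the two groups have different numbers of involutions.
\end{itemize}

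\emph{Main obstacle.} I expect the hardest part to be the $p=2$ normal form. It requires the standard argument that a nondegenerate quadratic form over $\mathbb{F}_2$ is an orthogonal sum of hyperbolic planes plus possibly one anisotropic plane, and that two anisotropic planes sum to two hyperbolic planes. The plan is to prove this by induction on $g$: split off a hyperbolic plane spanned by an isotropic vector and a partner, whenever an isotropic vector exists outside a single anisotropic plane.
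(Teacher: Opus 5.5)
Your argument is correct. Note that the paper does not prove this proposition; it only quotes the classification (Gorenstein, Ch.~5, \S 5, and [Pol22, \S 3]).

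The classical textbook argument is group-theoretic. One writes $G$ as a central product of non-abelian groups of order $p^3$. One then trades factors using explicit isomorphisms, namely $\D_8\circ\D_8\simeq\mathsf{Q}_8\circ\mathsf{Q}_8$ for $p=2$ and $M(p)\circ N(p)\simeq N(p)\circ N(p)$ for $p$ odd, so that at most one copy of $N(p)$ survives. The two survivors are then separated by an invariant such as the exponent or the number of involutions.

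You bypass the factor-trading by working directly on $V$. The $p$-th power map becomes a linear functional $\lambda$ ($p$ odd) or a quadratic form $q$ polarising to the commutator form ($p=2$). The fact that any nonzero vector sits in a symplectic basis, respectively the Arf-invariant classification of nondegenerate quadratic forms over $\mathbb{F}_2$, then gives you an adapted symplectic basis whose lifts satisfy \eqref{eq:H5} or \eqref{eq:G5} directly. Your route makes the distinguishing invariants intrinsic, and it meshes with what the paper uses later (the normal forms \eqref{eq:form-of-quadratic-forms} and the count $A(G)$). The textbook route avoids quadratic-form theory, at the cost of verifying two concrete isomorphisms between groups of order $p^5$.

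A few slips to fix:

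\emph{The aside about $\mathsf{Q}_8$ factors.} Your claim $\D_8\circ\mathsf{Q}_8\simeq\mathsf{Q}_8\circ\mathsf{Q}_8\circ\cdots$ is false. Central products correspond to orthogonal sums and the Arf invariant is additive. So $\D_8\circ\mathsf{Q}_8$ (Arf invariant $1$) is not isomorphic to $\mathsf{Q}_8\circ\mathsf{Q}_8$ (Arf invariant $0$), and adding further factors preserves the mismatch. Only $\mathsf{Q}_8\circ\mathsf{Q}_8\simeq\D_8\circ\D_8$ is needed, applied to pairs of $\mathsf{Q}_8$ factors. This does not affect your main line.

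\emph{From relations to isomorphism.} To conclude that $G$ is isomorphic to the presented group, say explicitly that the relations force the normal form $x_1^{a_1}y_1^{b_1}\cdots x_g^{a_g}y_g^{b_g}z^c$. So the presented group has order at most $p^{2g+1}$, and the surjection onto $G$ is an isomorphism. Existence of both groups, needed before you compare exponents or involution counts, comes from the central-product construction.

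\emph{The matrix Heisenberg group.} You do not address the identification with the matrix Heisenberg group for $p$ odd, but it follows at once from your classification. For $I+N$ in that group one has $N^3=0$, so $(I+N)^p=I+pN+\binom{p}{2}N^2=I$. Hence the group has exponent $p$, so $\lambda=0$.

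\emph{The hyperbolic splitting.} In your induction, the partner $f$ of a singular vector $e$ must be replaced by $f+q(f)e$ so that it is singular as well.
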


\begin{remark}\label{Rem:symplectic-basis}
As we have already said in Remark \ref{Rem:symplectic-form}, if $G$ is an extra-special group of order $p^{2g+1}$, the quotient group $V:=G/Z(G)  \simeq (\mathbb{F}_p)^{2g}$ becomes a  symplectic $\mathbb{F}_p$-vector space by setting  $(\bar{v}_1, \, \bar{v}_2)=k$ wherever $[v_1, \, v_2] = z^k$. Looking at \eqref{eq:H5} and \eqref{eq:G5} we see that, in both cases $\mathsf{H}_{2g+1}(\mathbb{F}_p)$ and $\mathsf{G}_{2g+1}(\mathbb{F}_p)$, we get
\begin{equation}
(\bar{x}_i, \, \bar{x}_j)=0, \quad (\bar{y}_i, \, \bar{y}_j)=0, \quad (\bar{x}_i, \, \bar{y}_j)= - \delta_{ij}
\end{equation}
for all $i, \, j \in \{1, \ldots, g\}$, thus 
\begin{equation} \label{eq:ordered-symplectic-basis}
\bar{x}_1, \, \bar{y}_1, \ldots, \bar{x}_g, \, \bar{y}_g
\end{equation}
is an ordered symplectic basis for $V$. If $\phi \in \operatorname{Aut}(G)$, then $\phi$ induces a linear map $\bar{\phi} \in \operatorname{End}(V)$. Such a map preserves the symplectic form on $V$, i.e., it belongs to $\mathsf{Sp}(V) \simeq \mathsf{Sp}(2g, \, \mathbb{F}_p)$, if and only if $\phi$ acts  trivially on $Z(G) = \langle z \rangle$.
\end{remark}
The previous remark leads to the following description of $\operatorname{Aut}(G)$, which can be found in \cite[Theorem 1]{Win72}.

\begin{proposition} \label{prop:H/I-extraspecial}
Let us denote by $H$ the subgroup of $\operatorname{Aut}(G)$ acting trivially on $Z(G)$. Then $\operatorname{Aut}(G) = \langle \theta \rangle \cdot H$, where $\theta$ has order $p-1$, $H \cap \langle \theta \rangle = \langle 1  \rangle$ and $H/ \operatorname{Inn}(G)$ is isomorphic to a subgroup of  $\mathsf{Sp}(2g, \, \mathbb{F}_p)$. More precisely$:$
\begin{itemize}
\item if $p$ is odd and $G=\mathsf{H}_{2g+1}(\mathbb{F}_p)$, then $H/\operatorname{Inn}(G)$ is isomorphic to $\mathsf{Sp}(2g, \, \mathbb{F}_p);$
\item if $p$ is odd and $G=\mathsf{G}_{2g+1}(\mathbb{F}_p)$, then $H/\operatorname{Inn}(G)$ is isomorphic to the semidirect product of $\mathsf{Sp}(2g-2, \, \mathbb{F}_p)$ and a normal extra-special group of order $p^{2g-1}$ (if $g=1$ then  $H/\operatorname{Inn}(G)$ has order $p$$)$.
\end{itemize}
\end{proposition}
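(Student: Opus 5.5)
The statement to prove is Proposition \ref{prop:H/I-extraspecial}, which describes $\operatorname{Aut}(G)$ for an extra-special group $G$ of order $p^{2g+1}$. The plan is to study the natural homomorphism
\[
\rho\colon \operatorname{Aut}(G)\to \operatorname{Aut}(Z(G))\simeq \mathbb{F}_p^{*}\simeq \C_{p-1},
\]
given by restriction to the characteristic subgroup $Z(G)=\langle z\rangle$. By definition $H=\ker\rho$, so $H$ is normal in $\operatorname{Aut}(G)$.

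\textbf{Step 1: $\rho$ is surjective with a section of order $p-1$.} Let $r$ be a primitive root mod $p$. Using the presentations \eqref{eq:H5} and \eqref{eq:G5}, try to define $\theta$ by
\[
\theta(x_i)=x_i^{\,r}, \qquad \theta(y_i)=y_i, \qquad \theta(z)=z^{r}.
\]
We must check that the defining relations are preserved. Since $z$ is central and $[x_i^{r},y_j]=[x_i,y_j]^{r}$, the commutator relations hold. For $\mathsf{G}_{2g+1}(\mathbb{F}_p)$ the relation $y_g^p=z$ breaks, because $\theta(y_g)^p=y_g^p=z\neq z^{r}$. In that case I would instead map $y_g$ to a suitable element $y_g^{\,r}w$ with $w$ in the span of the remaining generators, chosen so that both the $p$-th power relation and the commutators hold.

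To reach order exactly $p-1$, I would rather build $\theta$ as a lift of the similitude $\operatorname{diag}(r,1,\dots,r,1)$ of $V$ with multiplier $r$. Its image has order $p-1$ in $\operatorname{Aut}(Z(G))$, and a suitable power of a lift has order coprime to $p$ while still mapping onto a generator. Then $\langle\theta\rangle\cap H=1$, since $\rho$ is injective on $\langle\theta\rangle$, and $\operatorname{Aut}(G)=\langle\theta\rangle H$.

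\textbf{Step 2: $H/\operatorname{Inn}(G)$ embeds in $\mathsf{Sp}(2g,\mathbb{F}_p)$.} Consider $H\to \mathsf{Sp}(V)$, $\phi\mapsto\bar\phi$, which is well defined by Remark \ref{Rem:symplectic-basis}. Its kernel consists of the automorphisms $\phi$ with $\phi(v)=v\,\lambda(v)$ for some $\lambda(v)\in Z(G)$. The map $\lambda$ is a homomorphism $G\to Z(G)$ that is trivial on $Z(G)$, so $\lambda\in\operatorname{Hom}(V,\mathbb{F}_p)$. By nondegeneracy of the symplectic form, every such $\lambda$ is of the form $(\,\cdot\,,w)$, hence is realized by conjugation by a lift of $w$. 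The kernel is therefore $\operatorname{Inn}(G)\simeq V$, and $H/\operatorname{Inn}(G)$ embeds in $\mathsf{Sp}(2g,\mathbb{F}_p)$.

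\textbf{Step 3: identify the image for $p$ odd (the main obstacle).} A symplectic map $A$ lifts to $H$ exactly when it is compatible with the $p$-power map $G\to Z(G)$. For odd $p$, $(uv)^p=u^p v^p[v,u]^{\binom p2}=u^pv^p$, so the $p$-power map induces a linear functional $\ell\colon V\to\mathbb{F}_p$.

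For $\mathsf{H}_{2g+1}(\mathbb{F}_p)$ we have $\ell=0$. Given $A\in\mathsf{Sp}$, define $\phi(x_i)$ and $\phi(y_i)$ to be any lifts of $A\bar x_i$ and $A\bar y_i$. All the relations hold: the commutators are correct because $A$ is symplectic, and the $p$-th powers are trivial since the exponent is $p$. So the image is all of $\mathsf{Sp}(2g,\mathbb{F}_p)$.

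For $\mathsf{G}_{2g+1}(\mathbb{F}_p)$ we have $\ell\neq0$, and the image is the stabilizer of $\ell$. Writing $\ell=(\,\cdot\,,u)$ with $u\neq0$ (here $u$ is proportional to $\bar x_g$), this is the stabilizer of the vector $u$ in $\mathsf{Sp}(2g,\mathbb{F}_p)$. That stabilizer is the standard affine parabolic subgroup, equal to $Q\rtimes\mathsf{Sp}(u^\perp/\langle u\rangle)$, where $Q$ is the unipotent radical. Here $u^\perp/\langle u\rangle$ has dimension $2g-2$, and $Q$ is a Heisenberg group of order $p^{2g-1}$, which is extra-special; for $g=1$ the stabilizer is just the transvection group of order $p$. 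The delicate points are computing this stabilizer explicitly in a symplectic basis adapted to $u$, and checking surjectivity onto it, which again works by lifting arbitrarily.
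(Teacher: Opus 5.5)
The paper does not prove this proposition; it quotes Winter's Theorem~1. Your Steps 2 and 3 give a correct self-contained argument for those parts. The kernel of $H\to\mathsf{Sp}(V)$ is $\operatorname{Inn}(G)$ by nondegeneracy. For odd $p$ the $p$-power map is a functional $\ell=(\,\cdot\,,u)$, and the image is $\operatorname{Stab}_{\mathsf{Sp}(V)}(u)\simeq Q\rtimes\mathsf{Sp}(2g-2,\mathbb{F}_p)$ with $|Q|=p^{2g-1}$. There is one slip: in \eqref{eq:G5} both $x_g^p$ and $y_g^p$ equal $z$, so $u=\bar x_g-\bar y_g$, not a multiple of $\bar x_g$. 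This is harmless in Step 3, but it is exactly what breaks Step 1.

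The gap is Step 1 for $G=\mathsf{G}_{2g+1}(\mathbb{F}_p)$. Your fallback is a lift of $\operatorname{diag}(r,1,\dots,r,1)$ in the basis $\bar x_1,\bar y_1,\dots,\bar x_g,\bar y_g$. That is the same map on $V$ as the naive $\theta$ you had just rejected, and no lift exists. Any lift sends $y_g$ to $y_gc$ with $c\in Z(G)$ and sends $z$ to $z^r$, yet $(y_gc)^p=z\neq z^r$.

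What is missing is the similitude version of your Step 3 criterion. If $\phi(z)=z^{\lambda}$, then $(\bar\phi v,\bar\phi w)=\lambda\,(v,w)$ and $\ell\circ\bar\phi=\lambda\,\ell$, which forces $\bar\phi u=u$. Conversely, any such $A$ lifts: choose arbitrary lifts of the images of the generators and send $z\mapsto z^{\lambda}$. So you need a similitude of multiplier $r$ that fixes $u$. Concretely, impose $\theta(x_i)=x_i^r$ for all $i$ and $\theta(y_j)=y_j$ for $j<g$. The relations then force $\theta(y_g)\equiv x_g^{r-1}y_g$ modulo $Z(G)$, and $\theta(y_g)=x_g^{r-1}y_g$, $\theta(z)=z^r$ does define an automorphism.

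The order claim also has a gap. ``Order coprime to $p$'' does not give injectivity of $\rho$ on $\langle\theta\rangle$; you need order exactly $p-1$, and this can fail for the explicit lift above. We have $\theta^{p-1}(x_g)=x_g^{r^{p-1}}$, and since $x_g$ has order $p^2$, this differs from $x_g$ unless $r^{p-1}\equiv 1\pmod{p^2}$. For $p=3$, $r=2$ that $\theta$ has order $6$.

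The fix comes from your own Step 2. Let $\tilde\theta$ lift a similitude $A$ of multiplier $r$ and order $p-1$ in $\mathsf{GL}(V)$. Then $\tilde\theta^{p-1}$ acts trivially on $V$ and on $Z(G)$, so it lies in $\operatorname{Inn}(G)\simeq(\mathbb{F}_p)^{2g}$. Therefore $\theta:=\tilde\theta^{\,p}$ satisfies $\theta^{p-1}=1$ and $\rho(\theta)=r^p=r$. Hence $\theta$ has order exactly $p-1$ and $\langle\theta\rangle\cap H=1$.
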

\medskip

\subsection{The case $p$ odd}

We now consider the case where $p$ is an odd prime. Proposition \ref{prop:H/I-extraspecial} then yields the following counting result.

\begin{corollary} \label{cor:Aut-extraspecial}
Let $G$ be an extra-special group of order $p^{2g+1}$, with $p$ odd and $g \geq 2$.
\begin{itemize}
\item If $ G=\mathsf{H}_{2g+1}(\mathbb{F}_p)$, then
\begin{equation}
|\operatorname{Aut}(G)|= (p-1) \, p^{g(g+2)} \prod_{i=1}^g  (p^{2i} -1 ).
\end{equation}
\item If $ G=\mathsf{G}_{2g+1}(\mathbb{F}_p)$, then
\begin{equation}
|\operatorname{Aut}(G)|= (p-1) \, p^{g(g+2)} \prod_{i=1}^{g-1} (p^{2i} -1).
\end{equation}
\end{itemize}
\end{corollary}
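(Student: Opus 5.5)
The order will come from the decomposition $\operatorname{Aut}(G)=\langle\theta\rangle\cdot H$ of Proposition \ref{prop:H/I-extraspecial}. Since $H\cap\langle\theta\rangle=\langle 1\rangle$, we have $|\operatorname{Aut}(G)|=(p-1)\,|H|=(p-1)\,|\operatorname{Inn}(G)|\,|H/\operatorname{Inn}(G)|$.

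First, $\operatorname{Inn}(G)\simeq G/Z(G)=V\simeq(\mathbb{F}_p)^{2g}$, so $|\operatorname{Inn}(G)|=p^{2g}$.

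Next we use the classical order formula $|\mathsf{Sp}(2n,\mathbb{F}_p)|=p^{n^2}\prod_{i=1}^n(p^{2i}-1)$. One can derive it by counting ordered symplectic bases inductively: there are $p^{2n}-1$ choices for a nonzero vector $e$, then $p^{2n-1}$ choices for $f$ with $(e,f)=1$, and one recurses on $\langle e,f\rangle^\perp$. This gives $|\mathsf{Sp}(2n)|=(p^{2n}-1)p^{2n-1}|\mathsf{Sp}(2n-2)|$, and unwinding the recursion yields the formula since $\sum_{i=1}^n(2i-1)=n^2$.

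For $G=\mathsf{H}_{2g+1}(\mathbb{F}_p)$, Proposition \ref{prop:H/I-extraspecial} gives $H/\operatorname{Inn}(G)\simeq\mathsf{Sp}(2g,\mathbb{F}_p)$. Hence
\[
|\operatorname{Aut}(G)|=(p-1)\,p^{2g}\,p^{g^2}\prod_{i=1}^g(p^{2i}-1),
\]
and $2g+g^2=g(g+2)$.

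For $G=\mathsf{G}_{2g+1}(\mathbb{F}_p)$, Proposition \ref{prop:H/I-extraspecial} gives $H/\operatorname{Inn}(G)\simeq\mathsf{Sp}(2g-2,\mathbb{F}_p)\ltimes E$ with $|E|=p^{2g-1}$. Its order is therefore $p^{2g-1}\,p^{(g-1)^2}\prod_{i=1}^{g-1}(p^{2i}-1)$. Multiplying by $(p-1)p^{2g}$, the exponent of $p$ becomes $2g+(2g-1)+(g-1)^2=g^2+2g=g(g+2)$, which is the claimed formula.

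There is no real obstacle: the structural content is entirely contained in Proposition \ref{prop:H/I-extraspecial}. The only points needing care are the exponent bookkeeping and the justification that the product $\langle\theta\rangle\cdot H$ has order $(p-1)|H|$. The latter follows from the trivial intersection, since $H$ is normal as the kernel of the action on $Z(G)$.
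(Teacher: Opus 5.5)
Your proposal is correct and is essentially the paper's proof. Like the paper, you combine $\operatorname{Aut}(G)=\langle\theta\rangle\cdot H$ with $|\operatorname{Inn}(G)|=|G/Z(G)|=p^{2g}$ and the order formula $|\mathsf{Sp}(2n,\mathbb{F}_p)|=p^{n^2}\prod_{i=1}^n(p^{2i}-1)$ (which the paper simply quotes and you also derive), and your exponent count $2g+(2g-1)+(g-1)^2=g(g+2)$ matches the paper's intermediate expression $(p-1)\,p^{4g-1}\,|\mathsf{Sp}(2g-2,\mathbb{F}_p)|$.
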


\begin{proof}
Using also  the fact that $\operatorname{Inn}(G)\simeq G/Z(G)$, by Proposition \ref{prop:H/I-extraspecial} it follows that  
\begin{itemize}
\item if $ G=\mathsf{H}_{2g+1}(\mathbb{F}_p)$ then
\begin{equation}
|\operatorname{Aut}(G)|= (p-1) \, p^{2g}  \cdot |\mathsf{Sp}(2g, \, \mathbb{F}_p)|;
\end{equation}
\item if $ G=\mathsf{G}_{2g+1}(\mathbb{F}_p)$ then
\begin{equation}
|\operatorname{Aut}(G)|= (p-1) \, p^{4g-1}  \cdot |\mathsf{Sp}(2g-2, \, \mathbb{F}_p)|.
\end{equation}
\end{itemize}
The claim now follows, because for every $n \geq 1$ we have $|\mathsf{Sp}(2n, \, \mathbb{F}_p)| = p^{n^2} \prod _{i=1}^n (p^{2i} - 1)$.
\end{proof}

\begin{lemma} \label{lem:s-central-extraspecial}
Let $G$ be an extra-special group of order $p^{2g+1}$, with $p$ \emph{odd} and $g\geq 2$. Let $\varphi \colon \mathsf{B}_2(\Sigma_g) \to G$ be an admissible epimorphism, with generating vector $(\a_1, \, \b_1, \, \ldots, \a_g, \, \b_g, \,\s)$.  Then $\s \in Z(G) = \langle z \rangle$.
\end{lemma}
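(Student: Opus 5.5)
The approach is to pass to the symplectic space $V=G/Z(G)\simeq(\mathbb{F}_p)^{2g}$ of Remark \ref{Rem:symplectic-basis}, and to show that the image $\bar{\s}$ is orthogonal to every element of $V$. Since the symplectic form is non-degenerate, this forces $\bar{\s}=0$, that is, $\s\in Z(G)$. Only relations \eqref{B2} and the fact that the generating vector generates $G$ will be used.

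The first step is the basic fact about extra-special groups that the commutator map $G\times G\to Z(G)$ is bimultiplicative and depends only on the classes in $V$. Indeed, since $[G,G]=Z(G)$ is central, we have
\begin{equation}
[x,\,yw]=[x,\,y]\,y[x,\,w]y^{-1}=[x,\,y][x,\,w],
\end{equation}
and similarly in the first variable. Writing $[x,y]=z^{(\bar x,\bar y)}$ as in Remark \ref{Rem:symplectic-form}, we obtain an alternating bilinear form on $V$, so in particular $(\bar x,\bar x)=0$.

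The second step applies this to \eqref{B2}. Bilinearity gives
\begin{equation}
1=[\a_i,\,\s^{-1}\a_i\s^{-1}]=z^{(\bar\a_i,\,-\bar\s+\bar\a_i-\bar\s)}=z^{-2(\bar\a_i,\,\bar\s)},
\end{equation}
hence $2(\bar\a_i,\bar\s)=0$ in $\mathbb{F}_p$. Since $p$ is odd, $2$ is invertible, so $(\bar\a_i,\bar\s)=0$. The same computation applied to the second relation in \eqref{B2} gives $(\bar\b_i,\bar\s)=0$ for all $i$. Trivially we also have $(\bar\s,\bar\s)=0$.

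For the last step, the elements $\a_1,\b_1,\ldots,\a_g,\b_g,\s$ generate $G$, so their images span $V$. Thus $\bar\s$ lies in the radical of the form, which is zero by non-degeneracy (Remark \ref{Rem:symplectic-form}). Hence $\bar\s=0$, i.e.\ $\s\in Z(G)=\langle z\rangle$.

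There is no real obstacle here. The one delicate point is exactly where the hypothesis that $p$ is odd enters: it is needed to divide by $2$. For $p=2$ the argument breaks down, which is consistent with the different behaviour in the $2$-group case described in Theorem \ref{thm:D}. As a consistency check, $\s\ne 1$ by admissibility, so $\s$ has order $p$.
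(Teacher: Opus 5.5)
Your proof is correct, but it takes a different route from the paper's. The paper uses \eqref{TR}: $\s^2$ is a product of commutators, so $\s^2\in[G,\,G]=Z(G)$. Hence $2\bar{\s}=0$ in $V$, and since $p$ is odd, $\bar{\s}=0$. That is a one-line argument needing neither bilinearity nor non-degeneracy of the commutator pairing, nor the fact that the vector generates $G$. It only uses \eqref{TR} and the fact that $G/Z(G)$ has exponent $p$.

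You instead use \eqref{B2}. Because $G$ has class $2$, you get $[\a_i,\,\s^{-1}\a_i\s^{-1}]=[\a_i,\,\s]^{-2}$, so $[\a_i,\,\s]^2=1$. Since $|Z(G)|=p$ is odd, $[\a_i,\,\s]=1$, and likewise for $\b_i$. Your final appeal to non-degeneracy only restates that an element commuting with a generating set is central, so you could drop $V$ entirely.

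What your route adds is that \eqref{B2} alone, together with generation, already forces $\s$ to be central, independently of \eqref{B4} and \eqref{TR}. The paper's route is shorter and more economical in hypotheses. In both proofs, oddness of $p$ enters at the same point: cancelling a factor of $2$.
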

\begin{proof}
Let us consider the image $\bar{\s}$ of $\s$ in $V=G/Z(G) \simeq (\mathbb{F}_p)^{2g}$. Relation \eqref{TR} implies $s^2 \in [G, \, G] = Z(G)$, hence $2 \bar{s} = 0$. Since $p$ is odd, this means $\bar{\s}$=0, that is, $\s \in Z(G)$.
\end{proof}

\begin{proposition} \label{prop:extra-special_congruence}
Let $G$ be an extra-special group of order $p^{2g+1}$, with $p$ \emph{odd} and $g\geq 2$. Then $G$ is an admissible quotient of $\B_2(\Sigma_g)$ if and only if $g \equiv -1 \pmod{p}$. 
\end{proposition}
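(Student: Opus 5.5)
By Lemma \ref{lem:s-central-extraspecial}, any generating vector has $\s = z^k$ with $k \not\equiv 0 \pmod p$, since $\s\neq 1$. The plan is to substitute this into the relations \eqref{B2}, \eqref{B3}, \eqref{B4}, \eqref{TR}, read off a necessary arithmetic condition, and then show the condition is also sufficient by an explicit construction.

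\emph{Simplifying the relations.} Because $\s$ is central, conjugation by $\s$ is trivial. Because commutators in $G$ are central, $[u, vc]=[u,v]$ for every $c \in Z(G)$.
\begin{itemize}
\item Relation \eqref{B2} reads $[\a_i, \a_i z^{-2k}]=1$, which holds automatically.
\item Relations \eqref{B3} say that $\a_i, \b_i$ commute with $\a_j, \b_j$ for $j<i$. Hence all commutators between elements with different indices vanish.
\item Relation \eqref{B4} becomes $[\a_i, \b_i] = z^{2k}$ for every $i$.
\end{itemize}
In $G$ the commutator map descends to the bilinear symplectic form on $V$ of Remark \ref{Rem:symplectic-form}. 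Therefore $[\a_i, \b_i^{-1}] = [\a_i,\b_i]^{-1} = z^{-2k}$. Relation \eqref{TR} then gives $z^{-2kg} = z^{2k}$, that is, $z^{2k(g+1)}=1$. Since $p$ is odd and $p\nmid k$, the factor $2k$ is invertible mod $p$, and we get $g \equiv -1 \pmod p$. This proves necessity.

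\emph{Sufficiency.} Assume $g\equiv -1 \pmod p$. I would work with the presentations in Proposition \ref{prop:extra-special-groups}, which treat both $\mathsf{H}_{2g+1}(\mathbb{F}_p)$ and $\mathsf{G}_{2g+1}(\mathbb{F}_p)$ uniformly, since $[x_i,y_j]=z^{-\delta_{ij}}$ in both. Fix $k\neq 0$ and set
\[
\s = z^k, \qquad \a_i = y_i, \qquad \b_i = x_i^{2k}.
\]
Then $[\a_i,\b_i] = [y_i,x_i]^{2k} = z^{2k}$ by bilinearity, and all cross commutators between different indices are trivial. By the simplification above, \eqref{B2}, \eqref{B3} and \eqref{B4} hold. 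Relation \eqref{TR} reduces to $z^{-2kg}=z^{2k}$, which holds precisely because $p\mid g+1$.

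\emph{Generation.} The images of $\a_i, \b_i$ in $V$ are $\bar y_i$ and $2k\,\bar x_i$. Since $2k$ is a unit mod $p$, these form a basis of $V$. So $\langle \a_i,\b_i\rangle Z(G) = G$, and because $Z(G)=[G,G]$ is the Frattini subgroup, $\langle \a_i,\b_i\rangle=G$. Finally, $\s$ has order $p \geq 3$, so the epimorphism is admissible.

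The main point to handle carefully is the identity $[\a_i,\b_i^{-1}]=[\a_i,\b_i]^{-1}$ and the sign bookkeeping with the convention $[x,y]=xyx^{-1}y^{-1}$. Both follow from the fact that commutators are central, so the commutator map is bilinear.
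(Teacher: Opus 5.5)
Your proof is correct and follows the paper's argument. For necessity, centrality of $\s$ turns \eqref{B4} into $[\a_i,\b_i]=\s^2$, and \eqref{TR} then forces $\s^{2(g+1)}=1$; for sufficiency, you exhibit an explicit generating vector. The only difference is the witness: the paper takes $\a_i=x_i$, $\b_i=y_i$, $\s=z^{(p-1)/2}$, so that $\s^2=z^{-1}=[x_i,y_i]$. Your rescaled choice $\a_i=y_i$, $\b_i=x_i^{2k}$ works for every nontrivial $\s=z^k$, and you also write out the generation check that the paper leaves implicit.
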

\begin{proof}
Let $\mathscr{V}=(\a_1,\, \b_1, \ldots, \a_g, \, \b_g, \, \s)$ be a generating vector corresponding to an admissible epimorphism $\varphi \colon \B_2(\Sigma_g) \to G$. By Lemma \ref{lem:s-central-extraspecial} we have $\s \in Z(G)$, thus relation \eqref{B4} gives $[\a_i, \, \b_i] = \s^2$, and so we infer
\begin{equation} \label{eq:inverse}
\begin{split}
[\a_i, \, \b_i^{-1}]& =\a_i \b_i^{-1} \a_i^{-1} \b_i = \b_i^{-1} [\a_i, \, \b_i]^{-1} \b_i 
= \b_i^{-1} \s^{-2} \b_i = \s^{-2}.
\end{split}
\end{equation}
Therefore, relation \eqref{TR} yields $\s^{-2g} = \s^2$, which implies $-2g \equiv 2 \pmod{p}$ and so, since $p$ is odd, $g \equiv -1 \pmod{p}$. This yields the first implication.

In order to prove the other implication, let us set
\begin{equation}
\a_i:=x_i, \quad \b_i:=y_i, \quad \s :=z^{(p-1)/2}
\end{equation}
for all $i=1,\ldots, g$ (note that $\s$ is well-defined since $p$ is odd). Straightforward computations show that the $(2g+1)$-tuple $\mathscr{V}=(\a_1,\, \b_1, \ldots, \a_g, \, \b_g, \, \s)$ satisfies the relations \eqref{B2} and \eqref{B3}. Moreover, if $g \equiv -1 \pmod{p}$, then $\mathscr{V}$ fulfills the relations \eqref{B4} and \eqref{TR} as well. Hence, we produced a generating vector for $G$, and the proof is now complete. 
\end{proof}

\begin{theorem} \label{thm:extraspecial}
Let $p$ be  an odd prime number and $g \geq 2$ be an integer such that $g \equiv -1 \pmod{p}$. If $G$ is an extra-special group of order $p^{2g+1}$, then the number $N_g(G)$ of admissible epimorphisms $\varphi \colon \B_2(\Sigma_g) \to G$ is given by
\begin{equation} \label{eq:number_N_extraspecial_1}
N_g(G) =  (p-1) \, p^{g(g+2)} \prod_{i=1}^g (p^{2i} -1).
\end{equation}
In particular, $N_g(G)$ is independent of $G$.
\end{theorem}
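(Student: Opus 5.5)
The plan is to reduce the count entirely to linear algebra in the symplectic space $V=G/Z(G)\simeq(\mathbb{F}_p)^{2g}$ from Remark \ref{Rem:symplectic-basis}. We then count in three layers: the choice of $\s$, the choice of the images $\bar{\a}_i,\bar{\b}_i$ in $V$, and the choice of lifts to $G$.

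First I will characterize generating vectors completely. By Lemma \ref{lem:s-central-extraspecial}, $\s=z^k$ for some $k\in\{1,\dots,p-1\}$, and then $\s^2=z^{2k}\neq 1$ since $p$ is odd. Because $\s$ is central, the relations \eqref{B2} hold automatically. Relations \eqref{B3} say that $\a_i,\b_i$ commute with $\a_j,\b_j$ for all $i\neq j$. Relations \eqref{B4} say $[\a_i,\b_i]=\s^2$. Given \eqref{B4}, the computation \eqref{eq:inverse} shows that \eqref{TR} reduces to $\s^{-2g}=\s^2$, which holds because $g\equiv -1 \pmod p$. Since all commutators lie in $Z(G)=\langle z\rangle$, these conditions depend only on the images in $V$. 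Writing $c=2k\neq 0$ in $\mathbb{F}_p$, they read
\begin{equation*}
(\bar{\a}_i,\bar{\a}_j)=(\bar{\b}_i,\bar{\b}_j)=0,\qquad (\bar{\a}_i,\bar{\b}_j)=c\,\delta_{ij}.
\end{equation*}
For generation, I will use that $Z(G)=[G,G]$ is contained in the Frattini subgroup, which for an extra-special group equals $Z(G)$ since $G/Z(G)$ is elementary abelian. Hence the $2g+1$ elements generate $G$ if and only if $\bar{\a}_1,\bar{\b}_1,\dots,\bar{\a}_g,\bar{\b}_g$ span $V$ (note $\bar{\s}=0$). That is, they must form a basis, which is in fact forced by the nondegeneracy of the above Gram matrix.

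Next comes the count. Fix $k$, giving $p-1$ choices. Ordered $2g$-tuples in $V$ with the Gram matrix above are in bijection with ordered symplectic bases in the normalization of \eqref{eq:ordered-symplectic-basis}, via $\bar{\a}_i\mapsto -c^{-1}\bar{\a}_i$ with $\bar{\b}_i$ unchanged. Since $\mathsf{Sp}(V)$ acts simply transitively on such bases, their number is $|\mathsf{Sp}(2g,\mathbb{F}_p)|=p^{g^2}\prod_{i=1}^g(p^{2i}-1)$. Each of the $2g$ images has exactly $p$ lifts to $G$ (cosets of $Z(G)$). Every choice of lifts still satisfies all relations, since the relations only involve commutators, which are insensitive to central factors. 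This gives a factor $p^{2g}$. Multiplying yields
\begin{equation*}
(p-1)\,p^{2g}\,p^{g^2}\prod_{i=1}^g(p^{2i}-1)=(p-1)\,p^{g(g+2)}\prod_{i=1}^g(p^{2i}-1).
\end{equation*}
Independence from $G$ follows because only the symplectic structure of $V$ and $|Z(G)|$ entered, and these coincide for $\mathsf{H}_{2g+1}(\mathbb{F}_p)$ and $\mathsf{G}_{2g+1}(\mathbb{F}_p)$ by Remark \ref{Rem:symplectic-basis}.

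The main obstacle is the characterization step. There I must carefully verify two things. First, the commutator identities $[\a_i,\s^{-1}\b_i\s^{-1}]=[\a_i,\b_i]$ hold when $\s$ is central. Second, \eqref{TR} imposes no condition beyond \eqref{B4} together with the congruence. I also need the Frattini argument to justify that generation is detected in $V$. The remaining counting is routine.
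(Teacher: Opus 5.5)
Your proof is correct and follows the paper's argument. You show that $\s=z^k$ is central, reduce the relations to $\bar{\a}_1,\bar{\b}_1,\ldots,\bar{\a}_g,\bar{\b}_g$ being a (rescaled) ordered symplectic basis of $V$, and count $(p-1)\cdot p^{2g}\cdot|\mathsf{Sp}(2g,\mathbb{F}_p)|$; your rescaling for each $k$ and your Frattini argument for generation simply make explicit two steps the paper handles by ``up to automorphisms'' and by direct assertion.
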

 \begin{proof}
Let $\varphi \colon \mathsf{B}_2(\Sigma_g) \to G$ be an admissible epimorphism, with generating vector $(\a_1, \, \b_1, \, \ldots, \a_g, \, \b_g, \,\s)$; since $p$ is odd, we have $\s \in Z(G)$ (Lemma \ref{lem:s-central-extraspecial}). Up to automorphisms of $G$, we can assume $\s = z^{(p-1)/2}$, hence
\begin{equation} 
 [\a_i, \, \a_j] = [\b_i, \, \b_j] =1, \quad  [\a_i, \, \b_j] = z^{-\delta_{ij}}
  \end{equation}
for all $i, \, j \in \{1, \ldots, g\}$. Consequently, the elements $\bar{\a}_i, \, \bar{\b}_j \in V = G/Z(G) \simeq(\mathbb{F}_p)^{2g}$ satisfy 
\begin{equation} 
 (\bar{\a}_i, \, \bar{\a}_j) = (\bar{\b}_i, \, \bar{\b}_j) =0, \quad  (\bar{\a}_i, \,\bar{\b}_j) = -\delta_{ij},
  \end{equation}
 namely, $\bar{\a}_1, \, \bar{\b}_1, \ldots, \bar{\a}_g, \, \bar{\b}_g$ is an ordered symplectic basis of $V$.  Conversely, given any ordered symplectic basis  $ \bar{\a}_1, \, \bar{\b}_1, \ldots, \bar{\a}_g, \, \bar{\b}_g$ of $V$, it can be lifted to precisely $p^{2g}$ generating vectors of $G$, which have the form
 \begin{equation}
 (\a_1z^{\alpha_1}, \, \b_1 z^{\beta_1}, \ldots, \a_gz^{\alpha_g}, \, \b_g z^{\beta_g}, \, \s=z^{(p-1)/2)}), \quad \alpha_i, \, \beta_j \in \{0, \ldots, p-1\}.
 \end{equation}
Therefore, the number of generating vectors of $G$ with $\s=z^{(p-1)/2}$ is the product of $p^{2g}$ by the number of symplectic bases of $V$. In order to obtain the total number of generating vectors, we must multiply further by the number of nontrivial elements in $Z(G)$, which is $p-1$. Summing up, we get
\begin{equation}
N_g(G) =  (p-1) \, p^{2g} \cdot |\mathsf{Sp}(2g, \, \mathbb{F}_p)|,
\end{equation}
 which is \eqref{eq:number_N_extraspecial_1}.
\end{proof}

Using Theorem \ref{thm:extraspecial}, together with Remark \ref{rmk:action_auto} and Corollary \ref{cor:Aut-extraspecial}, we deduce

\begin{corollary} \label{cor:extraspecial_auto}
Let $p$ be  an odd prime number and $g \geq 2$ be an integer such that $g \equiv -1 \pmod{p}$. If $G$ is an extra-special group of order $p^{2g+1}$, then the number $N^{\circ}_g(G)$ of admissible epimorphisms $\varphi \colon \B_2(\Sigma_g) \to G$, up to automorphisms of $G$, is given by
\begin{equation} \label{eq:number_N_extraspecial_auto}
N^{\circ}_g(G) = 
\begin{cases}
1 & \text{if } \, G=\mathsf{H}_{2g+1}(\mathbb{F}_p) \\
p^{2g} -1 & \text{if } \, G=\mathsf{G}_{2g+1}(\mathbb{F}_p).
\end{cases}
\end{equation}
\end{corollary}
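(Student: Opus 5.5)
The corollary is a direct division, so the plan is to combine Remark \ref{rmk:action_auto}, Theorem \ref{thm:extraspecial} and Corollary \ref{cor:Aut-extraspecial}. By Remark \ref{rmk:action_auto}, $\operatorname{Aut}(G)$ acts freely on the set of generating vectors. Hence $N^{\circ}_g(G)=N_g(G)/|\operatorname{Aut}(G)|$, and this quotient is a genuine count of orbits. Theorem \ref{thm:extraspecial} gives the numerator
\begin{equation}
N_g(G)=(p-1)\,p^{g(g+2)}\prod_{i=1}^g(p^{2i}-1),
\end{equation}
the same for both isomorphism classes. Corollary \ref{cor:Aut-extraspecial} gives the denominator in each case.

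For $G=\mathsf{H}_{2g+1}(\mathbb{F}_p)$, the order $|\operatorname{Aut}(G)|$ coincides exactly with $N_g(G)$, so the quotient is $1$. Equivalently, $\operatorname{Aut}(G)$ acts simply transitively on generating vectors. One can see this structurally: automorphisms are determined by the images of $x_i,y_i$, and any lift of a symplectic basis together with a choice of nontrivial central $\s$ is realized. This is consistent with the count.

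For $G=\mathsf{G}_{2g+1}(\mathbb{F}_p)$, the prefactor $(p-1)p^{g(g+2)}$ is the same in numerator and denominator. The products differ only by the top factor $i=g$, so the quotient is
\begin{equation}
\frac{\prod_{i=1}^{g}(p^{2i}-1)}{\prod_{i=1}^{g-1}(p^{2i}-1)}=p^{2g}-1.
\end{equation}

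No real obstacle is expected. The only point needing care is that Corollary \ref{cor:Aut-extraspecial} is correctly derived from Proposition \ref{prop:H/I-extraspecial}. In the non-Heisenberg case this uses $|\operatorname{Inn}(G)|=p^{2g}$ and that $H/\operatorname{Inn}(G)$ has order $p^{2g-1}|\mathsf{Sp}(2g-2,\mathbb{F}_p)|$, which gives the factor $p^{4g-1}$ there. Since that corollary is stated earlier, I would simply invoke it and perform the division.
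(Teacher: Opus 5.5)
Your proposal is correct and matches the paper's argument: the paper also obtains the corollary by dividing $N_g(G)$ from Theorem \ref{thm:extraspecial} by $|\operatorname{Aut}(G)|$ from Corollary \ref{cor:Aut-extraspecial}, using the free action recorded in Remark \ref{rmk:action_auto}. Your arithmetic is right in both cases: exact cancellation for $\mathsf{H}_{2g+1}(\mathbb{F}_p)$, and the single surviving factor $p^{2g}-1$ for $\mathsf{G}_{2g+1}(\mathbb{F}_p)$, consistent with $4g-1+(g-1)^2=g(g+2)$.
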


It is remarkable that, in the case $G=\mathsf{H}_{2g+1}(\mathbb{F}_p) $, there exists precisely one admissible epimorphism $\varphi \colon \B_2(\Sigma_g) \to G$ up to automorphisms of $G$, namely, the one corresponding to the generating vector $(x_1, \, y_1, \ldots, x_g, \, y_g, \, z^{(p-1)/2})$.
\medskip

\subsection{The case $p=2$.}

Let us now consider the case of extra-special admissible quotients of $\B_2(\Sigma_g)$ of order $2^{2g+1}$. To this end, recall Remark \ref{Rem:symplectic-basis}. If $p=2$, then we
can set $q(\bar{v})=c$, where $v^2=z^c$ and $c \in \{0, \,
1\}$; this is a quadratic form on $V$. If $\bar{v} \in G/Z(G)$ is expressed
in coordinates, with respect to the symplectic basis
\eqref{eq:ordered-symplectic-basis}, by the vector $(\xi_1, \, \psi_1, \ldots,
  \xi_g, \,
\psi_g) \in (\mathbb{F}_2)^{2g}$,  then a straightforward computation yields
\begin{equation} \label{eq:form-of-quadratic-forms}
  q(\bar{v})=
  \begin{cases}
    \xi_1\psi_1+\cdots+\xi_g \psi_g & \textrm{if }
    G=\mathsf{H}_{2g+1}(\mathbb{F}_2) \\
    \xi_1\psi_1+\cdots+\xi_g \psi_g+ \xi_g^2+\psi_g^2 &
    \textrm{if }
    G=\mathsf{G}_{2g+1}(\mathbb{F}_2).
  \end{cases}
\end{equation}
These are the two possible normal forms for a non-degenerate quadratic form
of dimension $2g$ over $\mathbb{F}_2$; they have Arf invariant equal to $0$ and $1$, respectively, see for instance \cite{Dye78} or \cite[Chapter 10]{Li97}.\footnote{\label{footnote:Arf}There are several different (but related) notions of Arf invariant. If $V$ is a finite-dimensional $\mathbb{F}_2$-vector space and $q \colon V \to \mathbb{F}_2$ is a quadratic form, one can set 
\begin{equation*}
\operatorname{arf}(q):=\frac{1}{\sqrt{|V|}}\sum_{\alpha \in V} (-1)^{q(\alpha)}.
\end{equation*}
This is an invariant assuming values in $\{\pm 1\}$. The Arf invariant we use in this paper is the invariant $\operatorname{Arf}(q) \in \{0, \, 1\}$, related to the previous one by $\operatorname{arf}(q)=(-1)^{\operatorname{Arf}(q)}$.}

In both cases, the symplectic and the quadratic
form are related by
\begin{equation}
  q(\bar{v} \bar{w})=q(\bar{v})+q(\bar{w})+(\bar{v}, \, \bar{w})
  \quad
  \textrm{for all } \bar{v}, \, \bar{w} \in V.
\end{equation}
We have seen that, if $\phi \in \mathrm{Aut}(G)$,
then $\phi$ induces a linear map $\bar{\phi} \in \mathrm{End}(V)$; moreover,
if $p=2$, then $\phi$ acts trivially on $Z(G)=[G, \, G] \simeq \mathsf{C}_2$,
and this in turn
implies that $\phi$ preserves the symplectic form
on $V$. In other words, if we identify $V$ with $(\mathbb{F}_2)^{2g}$
via the symplectic basis \eqref{eq:ordered-symplectic-basis}, we have $\bar{\phi}
\in \mathsf{Sp}(2g, \, \mathbb{F}_2)$.

We are now in a position to describe the structure of the automorphism group for an extra-special $2$-group,  see again \cite[Theorem 1]{Win72}.

\begin{proposition} \label{prop:Out(G)}
  Let $G$ be an extra-special group of order $2^{2g+1}$, with $g\geq 2$. Then the
  kernel of the group homomorphism $\mathrm{Aut}(G) \to \mathsf{Sp}(2g,
  \, \mathbb{F}_2)$ given by $\phi \mapsto \bar{\phi}$ is the subgroup
  $\mathrm{Inn}(G)$ of inner automorphisms of $G$. Therefore
  $\mathrm{Out}(G)
  = \mathrm{Aut}(G)/\mathrm{Inn}(G)$ embeds in $\mathsf{Sp}(2g,
  \, \mathbb{F}_2)$. More precisely, $\mathrm{Out}(G)$ coincides
  with the
  orthogonal group $\mathsf{O}_{\epsilon}(2g, \, \mathbb{F}_2)$,
  of order
  \begin{equation} \label{eq:order-orthogonal}
    |\mathsf{O}_{\epsilon}(2g, \,
    \mathbb{F}_2)|=2^{g(g-1)+1}(2^g-\epsilon)
    \prod_{i=1}^{g-1}(2^{2i}-1),
  \end{equation}
  associated with the quadratic form $\mathrm{\eqref{eq:form-of-quadratic-forms}}$. Here $\epsilon =
  1$ if $G=\mathsf{H}_{2g+1}(\mathbb{F}_2)$ and $\epsilon = -1$ if
  $G=\mathsf{G}_{2g+1}(\mathbb{F}_2)$.\footnote{Because of their relation with orthogonal geometry in characteristic $2$, the groups  $\mathsf{H}_{2g+1}(\mathbb{F}_2)$ and $\mathsf{G}_{2g+1}(\mathbb{F}_2)$ are also called $2_+^{2g+1}$ and $2_-^{2g+1}$, respectively.}
\end{proposition}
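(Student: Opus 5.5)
The plan is to study the homomorphism $\rho\colon \operatorname{Aut}(G)\to \mathsf{Sp}(2g,\mathbb{F}_2)$, $\phi\mapsto\bar\phi$, and identify its kernel and its image separately.

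\emph{Kernel.} An inner automorphism $c_w$ sends $v\mapsto [w,v]v$, and $[w,v]\in Z(G)$, so it induces the identity on $V$. Hence $\mathrm{Inn}(G)\subseteq\ker\rho$. Conversely, let $\phi$ induce the identity on $V$. Then $\phi(v)=v\,z^{\lambda(\bar v)}$ for some function $\lambda\colon V\to\mathbb{F}_2$. Since $z$ is central and $\phi$ is a homomorphism, $\lambda$ is additive, so $\lambda\in\operatorname{Hom}(V,\mathbb{F}_2)$. The symplectic form is non-degenerate (Remark \ref{Rem:symplectic-form}), so there is $\bar w\in V$ with $\lambda=(\bar w,\,\cdot\,)$. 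Then $\phi=c_w$ for any lift $w$ of $\bar w$. Thus $\ker\rho=\mathrm{Inn}(G)$, which has order $|V|=2^{2g}$, and $\mathrm{Out}(G)$ embeds in $\mathsf{Sp}(2g,\mathbb{F}_2)$.

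\emph{Image.} Since $\phi(v)^2=\phi(v^2)$ and $\phi$ fixes $z$, every $\bar\phi$ preserves $q$. So $\operatorname{im}\rho\subseteq \mathsf{O}(q)$, and it remains to prove surjectivity. Let $T\in\mathsf{O}(q)$. Choose lifts $x_i',y_i'\in G$ of $T\bar x_i,T\bar y_i$. Because $T$ preserves the form, these lifts satisfy the commutator relations of \eqref{eq:H5} or \eqref{eq:G5}. Because $T$ preserves $q$, their squares are the right powers of $z$. So the assignment $x_i\mapsto x_i'$, $y_i\mapsto y_i'$, $z\mapsto z$ respects all defining relations and extends to an endomorphism of $G$. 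Its image maps onto $V$, hence is a subgroup $K$ with $KZ(G)=G$; since $Z(G)=[G,G]$ is the Frattini subgroup, $K=G$. So the endomorphism is surjective, hence an automorphism lifting $T$. I expect this lifting step to be the main point requiring care; it is cleanest to phrase it as checking that the presentation relations are satisfied.

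\emph{Order.} It remains to show that $|\mathsf{O}_\epsilon(2g,\mathbb{F}_2)|$ equals \eqref{eq:order-orthogonal}. I would cite the classical formula (e.g.\ \cite{Gor07} or standard references on finite orthogonal groups). Alternatively, argue by induction on $g$ via Witt's theorem: $\mathsf{O}(q)$ acts transitively on the nonzero singular vectors, which number $(2^g-\epsilon)(2^{g-1}+\epsilon)$. The stabiliser of such a vector $e$ maps onto $\mathsf{O}_\epsilon(2g-2,\mathbb{F}_2)$ on $e^\perp/\langle e\rangle$, with kernel of order $2^{2g-2}$. Multiplying these factors, starting from $|\mathsf{O}_\pm(2,\mathbb{F}_2)|=2,6$, gives the formula.
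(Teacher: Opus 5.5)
Your proposal is correct, but it takes a different route from the paper. The paper gives no proof of this proposition: it quotes the whole statement from \cite[Theorem 1]{Win72} and treats the order formula for $\mathsf{O}_{\epsilon}(2g,\mathbb{F}_2)$ as classical. You instead give a self-contained argument in three independent pieces.

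\begin{itemize}
\item \emph{Kernel.} Non-degeneracy of the commutator form identifies the automorphisms acting trivially on $V$ with the maps $v\mapsto v\,z^{(\bar w,\bar v)}$, that is, with $\mathrm{Inn}(G)\cong V$.
\item \emph{Image.} Squaring in $G$ is recorded by $q$, so any $T\in\mathsf{O}(q)$ lifts: you check the relations of \eqref{eq:H5} or \eqref{eq:G5} on chosen lifts and apply von Dyck's theorem. Surjectivity is automatic, since the image contains $z$ together with lifts of a basis of $V$.
\item \emph{Order.} You count by orbit--stabiliser on nonzero singular vectors.
\end{itemize}

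The numerical ingredients check out.
\begin{itemize}
\item $(2^g-\epsilon)(2^{g-1}+\epsilon)=2^{2g-1}+\epsilon 2^{g-1}-1$, which is exactly $2^{2g}-A(G)-1$ in the notation of Proposition \ref{prop:A(G)}.
\item The kernel of $\mathrm{Stab}(e)\to\mathsf{O}(e^\perp/\langle e\rangle)$ consists of the transvection-type maps $w\mapsto w+(u,w)e$ on $W$, parametrised by $u\in W$. It therefore has order $2^{2g-2}$, as you claim.
\item The ratio of consecutive values of \eqref{eq:order-orthogonal} is $2^{2g-2}(2^g-\epsilon)(2^{g-1}+\epsilon)$, because $2^{2g-2}-1=(2^{g-1}-\epsilon)(2^{g-1}+\epsilon)$. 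So the induction closes from $|\mathsf{O}_{\pm}(2,\mathbb{F}_2)|=2,\,6$.
\end{itemize}

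One point is left implicit: $e^\perp/\langle e\rangle$ again has type $\epsilon$. You should say why. Complete $e$ to a hyperbolic pair $e,f$ with $f=f_0+q(f_0)e$. Then $V=\langle e,f\rangle\perp W$ with $W\cong e^\perp/\langle e\rangle$, and the Arf invariant is additive with value $0$ on a hyperbolic plane. Witt's theorem, which you use for transitivity, is valid here because the polar form is non-degenerate.

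As for what each approach buys: the citation is short and uniform with the odd-$p$ description in Proposition \ref{prop:H/I-extraspecial}, which comes from the same theorem of Winter. Your route shows exactly which structure is used where. The kernel computation needs only non-degeneracy of the commutator form. The prime $2$ enters only through $q$ in the lifting step, and this is precisely why $\mathrm{Out}(G)$ is $\mathsf{O}(q)$ rather than all of $\mathsf{Sp}(2g,\mathbb{F}_2)$.
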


\begin{corollary} \label{cor:Aut-2-extraspecial}
  Let $G$ be an extra-special group of order $2^{2g+1}$, with $g\geq 2$. We have
  \begin{equation} \label{eq:Aut(G)}
    |\mathrm{Aut}(G)|=2^{g(g+1)+1}(2^g-\epsilon)
    \prod_{i=1}^{g-1}(2^{2i}-1)  = \frac{2^{g+1}}{2^g + \epsilon}\,\cdot \,  |\mathsf{Sp}(2g, \, \mathbb{F}_2)|.
       \end{equation}
\end{corollary}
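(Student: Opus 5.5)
The plan is to derive the corollary directly from Proposition \ref{prop:Out(G)}, using the exact sequence
\begin{equation}
1 \to \mathrm{Inn}(G) \to \mathrm{Aut}(G) \to \mathrm{Out}(G) \to 1.
\end{equation}
This sequence gives $|\mathrm{Aut}(G)| = |\mathrm{Inn}(G)| \cdot |\mathsf{O}_{\epsilon}(2g, \, \mathbb{F}_2)|$. Since $\mathrm{Inn}(G) \simeq G/Z(G) = V \simeq (\mathbb{F}_2)^{2g}$, we have $|\mathrm{Inn}(G)| = 2^{2g}$.

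Multiplying $2^{2g}$ by the order \eqref{eq:order-orthogonal} gives
\begin{equation}
|\mathrm{Aut}(G)| = 2^{2g} \cdot 2^{g(g-1)+1}(2^g-\epsilon)\prod_{i=1}^{g-1}(2^{2i}-1) = 2^{g(g+1)+1}(2^g-\epsilon)\prod_{i=1}^{g-1}(2^{2i}-1).
\end{equation}
The exponent is correct because $2g + g(g-1) + 1 = g^2 + g + 1 = g(g+1)+1$. This is the first equality in \eqref{eq:Aut(G)}.

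For the second equality I will use the formula $|\mathsf{Sp}(2g, \, \mathbb{F}_2)| = 2^{g^2}\prod_{i=1}^{g}(2^{2i}-1)$, already quoted in the proof of Corollary \ref{cor:Aut-extraspecial}. The key point is that $\epsilon = \pm 1$, so $\epsilon^2 = 1$ and the last factor splits as
\begin{equation}
2^{2g}-1 = (2^g-\epsilon)(2^g+\epsilon).
\end{equation}
Hence
\begin{equation}
\frac{2^{g+1}}{2^g+\epsilon}\, |\mathsf{Sp}(2g, \, \mathbb{F}_2)| = 2^{g+1} \cdot 2^{g^2}(2^g-\epsilon)\prod_{i=1}^{g-1}(2^{2i}-1).
\end{equation}
The right-hand side matches the first expression, since $g^2+g+1 = g(g+1)+1$.

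There is no real obstacle here. The only point needing care is to use the kernel statement of Proposition \ref{prop:Out(G)} (kernel equal to $\mathrm{Inn}(G)$) together with $Z(G)$ having order $2$, so that $|\mathrm{Inn}(G)| = |G|/2 = 2^{2g}$.
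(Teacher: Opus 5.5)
Your proposal is correct and follows the paper's proof: both obtain $|\mathrm{Aut}(G)| = |\mathrm{Inn}(G)|\cdot|\mathsf{O}_{\epsilon}(2g,\,\mathbb{F}_2)|$ from Proposition \ref{prop:Out(G)}, then combine $|\mathrm{Inn}(G)| = |G/Z(G)| = 2^{2g}$ with \eqref{eq:order-orthogonal}. Your check of the second equality, using $2^{2g}-1=(2^g-\epsilon)(2^g+\epsilon)$ and the order formula for $\mathsf{Sp}(2g,\,\mathbb{F}_2)$ (valid for $p=2$ as well), is also correct and spells out a step the paper leaves implicit.
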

\begin{proof}
  By Proposition \ref{prop:Out(G)} we get
  $|\mathrm{Aut}(G)|=|\mathrm{Inn}(G)|
  \cdot |\mathsf{O}_{\epsilon}(2g, \, \mathbb{F}_2)|$. Since
  $\mathrm{Inn}(G) \simeq G/Z(G)$ has order $2^{2g}$, the claim
  follows from \eqref{eq:order-orthogonal}.
\end{proof}

\begin{proposition} \label{prop:extra-special_2_congruence}
Let $G$ be an extra-special group of order $2^{2g+1}$, with $g\geq 2$.  Then $G$ is an admissible quotient of $\B_2(\Sigma_g)$ if and only if $g$ is odd.  In this case, for all generating vectors we have $\s^2=z$.
\end{proposition}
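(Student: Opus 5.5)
The plan is to push all the relations down to the symplectic space $V=G/Z(G)\simeq(\mathbb{F}_2)^{2g}$ of Remark \ref{Rem:symplectic-basis}. The basic observation is that for $u,w\in G$ the commutator $[u,w]$ lies in $Z(G)=\langle z\rangle$ and equals $z^{(\bar u,\bar w)}$, so it depends only on the images $\bar u,\bar w\in V$.

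\emph{Step 1: $\s^2\in Z(G)$.} Relation \eqref{TR} exhibits $\s^2$ as a product of commutators, so $\s^2\in[G,\,G]=Z(G)$. Hence $\s^2\in\{1,\,z\}$ and $2\bar\s=0$ automatically. Consequently $\s^{-1}\b_i\s^{-1}$ and $\b_i$ have the same image in $V$. The same holds for $\s^{-1}\a_i\s^{-1}$ versus $\a_i$, and for $\s^{-1}\a_j\s$ versus $\a_j$ (conjugation is trivial on the abelian group $V$).

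\emph{Step 2: translating the relations.} With the observation of Step 1:
\begin{itemize}
\item \eqref{B2} is automatic, since $(\bar\a_i,\bar\a_i)=0$.
\item \eqref{B3} says $(\bar\a_i,\bar\a_j)=(\bar\a_i,\bar\b_j)=(\bar\b_i,\bar\a_j)=(\bar\b_i,\bar\b_j)=0$ for $j<i$. By symmetry of the form in characteristic $2$, this holds for all $i\neq j$.
\item \eqref{B4} says $z^{(\bar\a_i,\bar\b_i)}=\s^2$.
\end{itemize}

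\emph{Step 3: $\s^2=z$.} Suppose instead $\s^2=1$. Then by Step 2 all pairings among $\bar\a_1,\bar\b_1,\ldots,\bar\a_g,\bar\b_g$ vanish, so their span $W$ is totally isotropic and $\dim W\le g$. But $V$ is spanned by $W$ and $\bar\s$, so $2g=\dim V\le g+1$. This contradicts $g\ge 2$. Hence $\s^2=z$ for every generating vector.

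\emph{Step 4: $g$ is odd.} Now $(\bar\a_i,\bar\b_i)=1$, so \eqref{B4} holds and $\bar\a_1,\bar\b_1,\ldots,\bar\a_g,\bar\b_g$ is a symplectic basis of $V$. Moreover $[\a_i,\b_i^{-1}]=z^{(\bar\a_i,-\bar\b_i)}=z$. So \eqref{TR} reads $z^g=z$, which forces $g$ odd.

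\emph{Step 5: the converse.} Assume $g$ odd, and take $\a_i=x_i$, $\b_i=y_i$ from the presentations in Proposition \ref{prop:extra-special-groups}. These already generate $G$, since they generate $V$ and $z=[y_1,x_1]$.

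For $\s$ we need an element with $\s^2=z$, i.e.\ a lift of a vector $v$ with $q(v)=1$ for the quadratic form \eqref{eq:form-of-quadratic-forms}. The vector $\bar x_1+\bar y_1$ works in both cases $\mathsf H$ and $\mathsf G$ (recall $g\ge 2$, so index $1\neq g$), so take $\s=x_1y_1$.

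The relations then hold by the dictionary of Step 2:
\begin{itemize}
\item \eqref{B2} and \eqref{B3} follow from $(\bar x_i,\bar x_j)=(\bar y_i,\bar y_j)=0$ and $(\bar x_i,\bar y_j)=0$ for $i\ne j$.
\item \eqref{B4} follows from $(\bar x_i,\bar y_i)=1$ and $\s^2=z$.
\item \eqref{TR} follows from $z^g=z$.
\end{itemize}
Since $\s\ne 1$, this is an admissible epimorphism of type $(g,\,4)$.

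The only delicate point is Step 3, i.e.\ ruling out $\s^2=1$. The isotropic-dimension count settles it.
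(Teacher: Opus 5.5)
Your proof is correct and follows the paper's argument. You reduce the relations to the symplectic space $V$, use the bound $\dim W\le g$ on totally isotropic subspaces to force $\s^2=z$, and then read \eqref{TR} as $z^g=z$. The only addition is your explicit witness $\a_i=x_i$, $\b_i=y_i$, $\s=x_1y_1$ for the converse, which the paper leaves to Theorem \ref{thm:2-extraspecial}; it is valid, since $(x_1y_1)^2=z$ holds in both $\mathsf{H}_{2g+1}(\mathbb{F}_2)$ and $\mathsf{G}_{2g+1}(\mathbb{F}_2)$ when $g\geq 2$.
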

\begin{proof}
Let $\mathscr{V}=(\a_1,\, \b_1, \ldots, \a_g, \, \b_g, \, \s)$ be a generating vector corresponding to an admissible epimorphism $\varphi \colon \B_2(\Sigma_g) \to G$. Let us now look at the images of these elements in $V=G/Z(G) \simeq (\mathbb{F}_2)^{2g}$ and recall the definition of the bilinear form $(\cdot, \, \cdot)$ and of the quadratic form $q$ on $V$. Since $\s^2 \in Z(G) = \langle z \rangle$, from relations \eqref{B2}, \eqref{B3}, \eqref{B4} we infer
\begin{equation}
 (\bar{\a}_i, \, \bar{\a}_j) = (\bar{\b}_i, \, \bar{\b}_j) =0, \quad  (\bar{\a}_i, \,\bar{\b}_j)=\delta_{ij} q(\bar{\s}).
\end{equation}
If $q(\bar{\s})=0$ then $\bar{\a}_1, \, \bar{\b}_1, \ldots, \bar{\a}_g, \, \bar{\b}_g$ generate a totally isotropic subspace of the symplectic space $V$, whose dimension is at most $(2g)/2=g$. Since $g \geq 2$, even adding $\bar{\s}$  we cannot generate $V$, so this case cannot occur. It follows that $q(\bar{\s})=1$, namely $\s^2=z$. Thus, $[\a_i, \, \b_i]=z$ for all $i$, and since $[\a_i, \b_i]=[\a_i, \b_i^{-1}]$, equation \eqref{TR} gives $z^g=z$, hence $g$ is odd.

Conversely, when $g$ is odd it is not difficult to construct a generating vector $\mathscr{V}$ for $G$; we will not do it here, since in Theorem \ref{thm:2-extraspecial} we will explain how to obtain all of them.
\end{proof}
For an extra-special group $G$ of order $2^{2g+1}$, let us define $A(G)$ as the number of vectors in $V$ which are anisotropic with respect to the quadratic form $q$, namely
\begin{equation} \label{eq:A(G)}
A(G)= \# \, \{\bar{v} \in V \; \; | \; \; q(\bar{v})=1 \}
\end{equation}

\begin{proposition} \label{prop:A(G)}
We have
\begin{equation} \label{eq:A(G)_value}
A(G)=
  \begin{cases}
  2^{2g-1} -  2^{g-1}   & \textrm{if }
    G=\mathsf{H}_{2g+1}(\mathbb{F}_2) \\
   2^{2g-1} +  2^{g-1}&
    \textrm{if }
    G=\mathsf{G}_{2g+1}(\mathbb{F}_2).
  \end{cases}
\end{equation}
\end{proposition}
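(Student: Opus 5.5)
We count the zeros of $q$ in each of the two normal forms \eqref{eq:form-of-quadratic-forms} and then take complements. Since $|V| = 2^{2g}$, it is enough to compute
\[
Z(q) := \#\{\bar v \in V : q(\bar v)=0\},
\qquad\text{because then}\qquad
A(G) = 2^{2g} - Z(q).
\]
Equivalently, we compute the character sum $\operatorname{arf}(q)\cdot 2^g = \sum_{\bar v \in V} (-1)^{q(\bar v)}$ from footnote \ref{footnote:Arf}. Writing $S(q)$ for this sum, we have
\[
S(q) = Z(q) - A(G) = 2^{2g} - 2A(G),
\qquad\text{so}\qquad
A(G) = \tfrac12\bigl(2^{2g} - S(q)\bigr).
\]

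The key point is that both normal forms are orthogonal sums of $2$-dimensional blocks in the coordinates $(\xi_i,\psi_i)$. Hence the character sum factorizes:
\[
S(q) = \prod_{i=1}^g \sum_{(\xi_i,\psi_i)\in \mathbb{F}_2^2} (-1)^{q_i(\xi_i,\psi_i)}.
\]
We evaluate the two kinds of blocks directly.

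\begin{itemize}
\item For the hyperbolic block $q_i = \xi\psi$, the values over $\mathbb{F}_2^2$ are $0,0,0,1$. The block sum is therefore $3-1 = 2$.
\item For the last block of $\mathsf{G}_{2g+1}(\mathbb{F}_2)$, $q_g = \xi\psi+\xi^2+\psi^2$, we use $\xi^2=\xi$ over $\mathbb{F}_2$. The values are then $0,1,1,1$, and the block sum is $1-3 = -2$.
\end{itemize}

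Putting the blocks together:
\begin{itemize}
\item For $G = \mathsf{H}_{2g+1}(\mathbb{F}_2)$ we get $S = 2^g$, hence $A(G) = \tfrac12(2^{2g}-2^g) = 2^{2g-1} - 2^{g-1}$.
\item For $G = \mathsf{G}_{2g+1}(\mathbb{F}_2)$ we get $S = 2^{g-1}\cdot(-2) = -2^g$, hence $A(G) = 2^{2g-1} + 2^{g-1}$.
\end{itemize}

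No step is a real obstacle. The only points needing care are that the factorization of the character sum is justified because $(-1)^{q}$ is multiplicative over the direct-sum decomposition, and that $\xi^2=\xi$ is used in the exceptional block. As a sanity check, one can instead argue inductively on $g$ with the recursion $Z_{g} = 3Z_{g-1} + (2^{2g-2} - Z_{g-1})$ for adding a hyperbolic block; this gives the same result.
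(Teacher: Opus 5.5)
Your proof is correct and takes essentially the same route as the paper, which also combines $I(G)+A(G)=2^{2g}$ with $I(G)-A(G)=\sum_{\bar v\in V}(-1)^{q(\bar v)}=\pm 2^g$. The only difference is how the sign is obtained: the paper takes it from the cited fact that the two normal forms have Arf invariant $0$ and $1$, whereas you compute it directly by factorizing the character sum into $2$-dimensional blocks contributing $2$ or $-2$, which makes your argument self-contained.
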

\begin{proof}
Let us denote by $I(G)$ the set of isotropic vectors for $(V, \, q)$, namely, the number of vectors $\bar{v}$ of $V$ such that $q(\bar{v})=0$. Then $I(G)+A(G)=|V|=2^{2g}$. Moreover, by definition of the Arf invariant $\operatorname{Arf}(q)$ (see footnote \ref{footnote:Arf}), we also have $I(G)-A(G)=(-1)^{\operatorname{Arf}(q)}2^g$. Subtracting the second relation from the first one, the claim follows.  
\end{proof}

We can now prove the analog of Theorem \ref{thm:extraspecial} in the case $p=2$.

\begin{theorem} \label{thm:2-extraspecial}
Let $g\geq 3$ be an odd positive integer and $G$ an extra-special group of order $2^{2g+1}$. The number $N_g(G)$ of admissible epimorphisms $\varphi \colon \B_2(\Sigma_g) \to G$ is given by
\begin{equation} \label{eq:number_N_2-extraspecial}
N_g(G) =  2^{2g+1} A(G) \cdot |\mathsf{Sp}(2g, \, \mathbb{F}_2)|.
\end{equation}
\end{theorem}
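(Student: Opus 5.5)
The plan is to follow the strategy of Theorem \ref{thm:extraspecial}: translate the defining relations \eqref{B2}, \eqref{B3}, \eqref{B4}, \eqref{TR} into conditions on the images $\bar{\a}_i, \bar{\b}_i, \bar{\s}$ in $V=G/Z(G)\simeq(\mathbb{F}_2)^{2g}$, using the symplectic form $(\cdot,\cdot)$ and the quadratic form $q$. Then I would count the solutions downstairs and multiply by the number of lifts.

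First I would use that $[G,G]=Z(G)=\langle z\rangle$ has order $2$. Since all commutators are central, $[u,v]=z^{(\bar u,\bar v)}$ is bilinear in $\bar u,\bar v$ and depends only on the images in $V$.
\begin{itemize}
\item Relation \eqref{B2} reads $(\bar{\a}_i,\bar{\a}_i)+2(\bar{\a}_i,\bar{\s})=0$ in characteristic $2$, so it holds automatically; the same goes for $\b_i$.
\item Relations \eqref{B3} say $(\bar{\a}_i,\bar{\a}_j)=(\bar{\a}_i,\bar{\b}_j)=(\bar{\b}_i,\bar{\a}_j)=(\bar{\b}_i,\bar{\b}_j)=0$ for $j<i$, because conjugation by $\s$ does not change images in $V$.
\item Relation \eqref{B4} says $z^{(\bar{\a}_i,\bar{\b}_i)}=\s^2=z^{q(\bar{\s})}$.
\item By Proposition \ref{prop:extra-special_2_congruence}, every generating vector has $q(\bar{\s})=1$. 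Hence $\bar{\a}_1,\bar{\b}_1,\ldots,\bar{\a}_g,\bar{\b}_g$ is an ordered symplectic basis of $V$ (signs are irrelevant over $\mathbb{F}_2$).
\item Relation \eqref{TR} then becomes $z^g=z$, which holds automatically since $g$ is odd.
\end{itemize}

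Conversely, I would check that any $(2g+1)$-tuple such that $(\bar{\a}_i,\bar{\b}_i)$ is an ordered symplectic basis and $q(\bar{\s})=1$ satisfies all the relations, by reading the computation above backwards. I also need it to generate $G$. Here I would use that $Z(G)=[G,G]$ and $G/Z(G)$ is elementary abelian, so $Z(G)$ is the Frattini subgroup $\Phi(G)$. Therefore any set of elements whose images span $V$ generates $G$, and a symplectic basis already spans $V$.

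The count is then a product of independent choices:
\begin{itemize}
\item $|\mathsf{Sp}(2g,\mathbb{F}_2)|$ ordered symplectic bases of $V$, since $\mathsf{Sp}$ acts simply transitively on them;
\item $2^{2g}$ lifts of these $2g$ vectors to $G$, each fiber being a coset of $Z(G)\simeq \C_2$;
\item $A(G)$ choices of an anisotropic vector $\bar{\s}$;
\item $2$ lifts of $\bar{\s}$.
\end{itemize}
These choices do not interact, so the total is $N_g(G)=2^{2g+1}A(G)\,|\mathsf{Sp}(2g,\mathbb{F}_2)|$, with $A(G)$ given by Proposition \ref{prop:A(G)}.

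The main obstacle is carrying out the commutator translation carefully, especially for \eqref{B4}. There I would expand $[\a_i,\s^{-1}\b_i\s^{-1}]$ using centrality of commutators, to confirm that the $\s$-contributions cancel modulo $2$ and leave exactly $z^{(\bar{\a}_i,\bar{\b}_i)}$. I would also make sure that no hidden condition on the lifts or on $\bar{\s}$ (such as a dependence on $(\bar{\a}_i,\bar{\s})$) survives.
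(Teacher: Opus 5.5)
Your proposal is correct and follows the same route as the paper. You project to $V=G/Z(G)$ and use the proof of Proposition~\ref{prop:extra-special_2_congruence} to force $q(\bar{\s})=1$ and an ordered symplectic basis; you then verify the converse and multiply the $A(G)\cdot|\mathsf{Sp}(2g,\mathbb{F}_2)|$ downstairs choices by the $2^{2g+1}$ lifts. Your Frattini-subgroup argument for generation in the converse, and your explicit check that the $\bar{\s}$-contributions vanish in characteristic $2$, make precise two points the paper leaves implicit.
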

\begin{proof}
Let $\varphi \colon \mathsf{B}_2(\Sigma_g) \to G$ be an admissible epimorphism, with generating vector $(\a_1, \, \b_1, \, \ldots, \a_g, \, \b_g, \,\s)$. The proof of Proposition \ref{prop:extra-special_2_congruence} shows that, given the projected data  
\begin{equation}
\bar{\a}_1, \, \bar{\b}_1, \, \ldots, \bar{\a}_g, \, \bar{\b}_g, \, \bar{\s}
\end{equation}
in $V$, we have $q(\bar{\s})=1$ and moreover $\bar{\a}_1, \, \bar{\b}_1, \ldots, \bar{\a}_g, \, \bar{\b}_g$ is an ordered symplectic basis of $V$.  So the number of possible projected data is $A(G) \cdot |\mathsf{Sp}(2g, \, \mathbb{F}_2)|$. 
Conversely, if $(\a_1, \, \b_1, \, \ldots, \a_g, \, \b_g, \,\s)$ is a vector of elements of $G$ such that 
$\bar{\a}_1, \, \bar{\b}_1, \, \ldots, \bar{\a}_g, \, \bar{\b}_g$ is an ordered symplectic basis for $V$ and $q(\bar\s)=1$, it is easy to see that $(\a_1, \, \b_1, \, \ldots, \a_g, \, \b_g, \,\s)$ satisfies \eqref{B2}, \eqref{B3} and \eqref{B4} since $\s^2=z\in Z(G)$, 
and \eqref{TR} since $g$ is odd.
Hence, we must understand how the projected data lift to $G$. The same argument used in the case $p$ odd shows that  the symplectic basis lifts in $2^{2g}$ different ways. On the other hand, $\bar{\s}$ lifts to two different elements, namely $\s$ and $\s z$. Summing up, $N_g(G)$ is obtained multiplying the number of projected data by  $2^{2g+1}$, so we get \eqref{eq:number_N_2-extraspecial}.  
\end{proof}

Using Theorem \ref{thm:2-extraspecial}, together with Remark \ref{rmk:action_auto} and Corollary \ref{cor:Aut-2-extraspecial}, we deduce

\begin{corollary} \label{cor:2-extraspecial_auto}
Let $g\geq 3$ be an odd positive integer and $G$ an extra-special group of order $2^{2g+1}$. The number $N_g^{\circ}(G)$ of admissible epimorphisms $\varphi \colon \B_2(\Sigma_g) \to G$, up to automorphisms of $G$, is given by
\begin{equation} \label{eq:number_N_2-extraspecial_auto}
N_g^{\circ}(G) = \frac{1}{\operatorname{Aut}(G)} N_g(G)= 2^{2g-1}(2^{2g} -1).
\end{equation}
In particular, $N_g^{\circ}(G)$ is independent of $G$.
\end{corollary}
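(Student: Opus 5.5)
The corollary follows by dividing the count of Theorem \ref{thm:2-extraspecial} by the order of the automorphism group computed in Corollary \ref{cor:Aut-2-extraspecial}. This is legitimate by Remark \ref{rmk:action_auto}, because $\operatorname{Aut}(G)$ acts freely on generating vectors. I would carry out the computation in the following order.

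First, I write $N_g(G) = 2^{2g+1}A(G)\,|\mathsf{Sp}(2g,\mathbb{F}_2)|$ and
\begin{equation*}
|\operatorname{Aut}(G)| = \frac{2^{g+1}}{2^g+\epsilon}\,|\mathsf{Sp}(2g,\mathbb{F}_2)|,
\end{equation*}
where $\epsilon=1$ for $\mathsf{H}_{2g+1}(\mathbb{F}_2)$ and $\epsilon=-1$ for $\mathsf{G}_{2g+1}(\mathbb{F}_2)$. The symplectic factor cancels in the quotient, leaving
\begin{equation*}
N_g^{\circ}(G) = 2^{g}\,(2^g+\epsilon)\,A(G).
\end{equation*}

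Next, I rewrite Proposition \ref{prop:A(G)} uniformly as $A(G)=2^{g-1}(2^g-\epsilon)$. For $\epsilon=1$ this gives $2^{2g-1}-2^{g-1}$, and for $\epsilon=-1$ it gives $2^{2g-1}+2^{g-1}$, matching the proposition in both cases. Substituting yields
\begin{equation*}
N_g^{\circ}(G)=2^{2g-1}(2^g+\epsilon)(2^g-\epsilon)=2^{2g-1}(2^{2g}-1).
\end{equation*}
This expression does not involve $\epsilon$, so $N_g^{\circ}(G)$ is independent of $G$.

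The only point needing care is the bookkeeping of $\epsilon$. I must check that the sign in $|\mathsf{O}_\epsilon(2g,\mathbb{F}_2)|$ is the opposite of the sign appearing in $A(G)$, so that the product is the difference of squares $2^{2g}-1$. I would verify this by confirming the second equality in \eqref{eq:Aut(G)}, using
\begin{equation*}
|\mathsf{Sp}(2g,\mathbb{F}_2)| = 2^{g^2}\prod_{i=1}^{g}(2^{2i}-1)
\quad\text{and}\quad
2^{2g}-1=(2^g-\epsilon)(2^g+\epsilon).
\end{equation*}
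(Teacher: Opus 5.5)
Your proposal is correct and is exactly the paper's argument. The paper likewise uses the free action in Remark \ref{rmk:action_auto} to divide the count of Theorem \ref{thm:2-extraspecial} by $|\operatorname{Aut}(G)|$ from Corollary \ref{cor:Aut-2-extraspecial}, and the $\epsilon$-dependence disappears because $A(G)=2^{g-1}(2^g-\epsilon)$ combines with $2^g(2^g+\epsilon)$ to give $2^{2g-1}(2^{2g}-1)$.

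One small wording fix. $|\mathsf{O}_\epsilon(2g,\mathbb{F}_2)|$ and $A(G)$ actually carry the \emph{same} factor $2^g-\epsilon$, which simply cancels in the quotient. The opposite sign $2^g+\epsilon$ in your computation comes from the index $|\mathsf{Sp}(2g,\mathbb{F}_2)|/|\mathsf{O}_\epsilon(2g,\mathbb{F}_2)|=2^{g-1}(2^g+\epsilon)$.
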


\begin{remark} \label{rem:difference-between-p-odd_2}
The case $p=2$ exhibits several features that are absent when $p$ is odd.

For an extraspecial $p$-group $G$ of exponent $p$, with $p$ odd, the quotient
$V=G/Z(G) \simeq (\mathbb{F}_p)^{2g}$ carries a non-degenerate alternating form induced by the commutator map. The admissibility condition is controlled by this symplectic structure: the image of the braid generator $\sigma \in \B_2(\Sigma_g)$  is a nontrivial central element $\s \in G$ and, once fixed $\s$,  admissible generating vectors are obtained by lifting symplectic bases of $V$. 

The situation is markedly different when $p=2$. In this case, the symplectic space $V$ admits a canonical quadratic form $q \colon V \longrightarrow \mathbb{F}_2$, defined by $q(\bar v)=0$ if and only if $v^2=1$.
The admissibility condition is no longer determined by central elements, but rather by elements $\s\in G$ satisfying $\s^2=z$. Equivalently, the image $\bar{\s}\in V$ must be anisotropic with respect to $q$, i.e., we must have $q(\bar{\s})=1$. So, the image $\s \in G$ of the braid generator $\sigma \in \B_2(\Sigma_2)$ has order $4$, whereas for odd $p$ it has order $p$. This means, in particular, that if $G$ is an admissible extra-special quotient of $\B_2(\Sigma_g)$, then the image of $\sigma$ is never an involution. This is a striking contrast to the case of cyclic and dihedral quotients.

Thus, while the case with $p$ odd  is governed only by symplectic geometry, the case of characteristic $2$ is controlled by the finer structure of a quadratic space and its Arf invariant. This dichotomy is reflected in our counting formulas. For odd $p$, the quantity $N_g(G)$ is independent of $G$, whereas $N_g^\circ(G)$ depends on it, and so distinguishes the Heisenberg and non-Heisenberg types. For $p=2$, the opposite phenomenon occurs, namely $N_g(G)$ depends on the Arf invariant of $q$, and so on $G$, while $N_g^\circ(G)$ is independent of $G$.
\end{remark}

\begin{remark} \label{rmk:extra-special-pure}
Admissible extraspecial quotients of the \emph{pure} braid group $\mathsf{P}_2(\Sigma_g)$ were studied in a series of papers by the third author and collaborators, see \cite{CaPol21, PolSab22, PolSab23, PolSab26}. A comparison between the results obtained therein and those presented in the present article would be of independent interest, but lies beyond the scope of this work.
\end{remark}

\section{The first homology group of the Galois cover $S$} \label{sec:H_1}

Let us now explain how to compute the first homology group $H_1(S, \,\mathbb{Z})$, where $f \colon S \to \Sym^2 \Sigma_g$ is the $G$-cover associated with an admissible epimorphism  $\varphi \colon \B_2(\Sigma_g) \to G$ of type $(g, \, n)$. To this purpose, we will make use of the following result, which follows from \cite[Theorem p. 254]{Fox57}.

\begin{proposition} \label{prop:fundamental-group-branched-cover}
  Let $G$ be a finite group and $\varphi \colon \mathsf{B}_2(\Sigma_g)
  \to G$ be an admissible epimorphism such that $\varphi(\sigma)$
  has order $n \geq 2$. If $f \colon S \to \Sym^2 \Sigma_g$
  is the $G$-cover associated with $\varphi$, then
  $\pi_1(S)$ fits into a short exact sequence
  \begin{equation} \label{eq:fundamental-group-branched-cover}
    1 \to \pi_1(S) \to	\mathsf{B}_2(\Sigma_g)^{\operatorname{orb}}
    \stackrel{\,\,\,\bar{\varphi}}{\to} G \to 1,
  \end{equation}
  where the \emph{orbifold braid group} $\mathsf{B}_2(\Sigma_g)^{\operatorname{orb}}$ is defined as the quotient of
  $\mathsf{B}_2(\Sigma_g)$ by the normal closure of the cyclic subgroup
  $\langle \sigma^n \rangle$, and $\bar{\varphi} \colon
  \mathsf{B}_2(\Sigma_g)^{\operatorname{orb}}  \to G$ is the group epimorphism
  naturally induced by $\varphi$.
\end{proposition}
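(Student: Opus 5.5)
The plan is to realize $S$ as a quotient of the universal orbifold cover of the pair $(\Sym^2 \Sigma_g, \, \delta)$ with cone angle $2\pi/n$ along $\delta$. Equivalently, we apply Fox's theory of branched coverings \cite{Fox57} to the unbranched restriction
\begin{equation}
f^{\circ} \colon S^{\circ}:=S - R \to \Sym^2 \Sigma_g - \delta .
\end{equation}
The argument has three steps.

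\textbf{Step 1: the unbranched part.} By construction, $f^{\circ}$ is the connected Galois cover corresponding to $\ker \varphi$. Hence we get a short exact sequence
\begin{equation}
1 \to \pi_1(S^{\circ}) \to \B_2(\Sigma_g) \stackrel{\varphi}{\to} G \to 1 .
\end{equation}

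\textbf{Step 2: filling in the ramification divisor.} Since $R \subset S$ is a smooth divisor of real codimension two, a Zariski--van Kampen type argument shows that $\pi_1(S)$ is the quotient of $\pi_1(S^{\circ})$ by the normal closure (in $\pi_1(S^{\circ})$) of the meridians around the components of $R$. To identify these meridians, note that a small loop around $\delta$ is conjugate in $\B_2(\Sigma_g)$ to $\sigma^{\pm 1}$ (Remark \ref{rmk:meaning_sigma}). Its lifts to $S^{\circ}$ close up exactly after $n$ turns, because $\varphi(\sigma)$ has order $n$ and $f$ has ramification index $n$ along $R$ (Proposition \ref{prop:invariants_S}). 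So the meridians of $R$ in $\pi_1(S^{\circ})$ are precisely the $\B_2(\Sigma_g)$-conjugates of $\sigma^{n}$.

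\textbf{Step 3: the normal closures agree.} Let $K$ be the normal closure of $\langle \sigma^n\rangle$ in $\B_2(\Sigma_g)$. Since $\sigma^n \in \ker\varphi = \pi_1(S^{\circ})$ and $\ker\varphi$ is normal, $K \subseteq \pi_1(S^{\circ})$. The normal closure of the conjugates of $\sigma^n$ inside $\pi_1(S^{\circ})$ is exactly $K$, because the set of all $\B_2(\Sigma_g)$-conjugates of $\sigma^n$ is already stable under conjugation by $\pi_1(S^{\circ})$. Therefore $\pi_1(S) \simeq \pi_1(S^{\circ})/K$. Dividing the sequence of Step 1 by $K$, and using $K \subseteq \ker \varphi$ so that $\varphi$ descends to $\bar{\varphi}$, gives \eqref{eq:fundamental-group-branched-cover}.

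The main obstacle is Step 2: justifying rigorously that removing a smooth divisor kills exactly the normal closure of its meridians, and that the meridians correspond to $\sigma^n$ rather than $\sigma$. I would handle this by citing Fox's theorem \cite[Theorem p. 254]{Fox57} on completions of spreads, which states that the branched cover is determined by $\ker \varphi$ and that its fundamental group is $\ker\varphi$ modulo the normal closure of the $n$-th powers of the meridian. The local model $z \mapsto z^n$ in a tubular neighbourhood of $R$ makes the identification of the meridian explicit.
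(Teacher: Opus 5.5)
Your proposal is correct and takes the same route as the paper, whose proof consists only of citing Fox's theorem on completions of unbranched covers. Your Steps 1--3 unpack that citation: you identify $\pi_1(S-R)$ with $\ker\varphi$, you kill the meridians of $R$ (which are the $\B_2(\Sigma_g)$-conjugates of $\sigma^{\pm n}$), and you check that their normal closure inside $\ker\varphi$ is the normal closure of $\langle\sigma^n\rangle$ in $\B_2(\Sigma_g)$.
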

Proposition \ref{prop:fundamental-group-branched-cover} allows one to compute $\pi_1(S)$, and so its abelianization $H_1(S,
\, \mathbb{Z})$. The calculations are usually unfeasible by hand, but can be carried out by  using the Computer Algebra System \verb|GAP4|, see   \cite{GAP4}. Let us present the script in the case where $g=2$ and $G=\mathsf{D}_8 = \langle x, \, y \;\; | \;\; x^2=y^4=1, \, xyx^{-1}=y^{-1}\rangle$, analyzing in details the surfaces $S$ that we obtain.

We start by constructing the braid group $\B_2(\Sigma_2)$.
\begin{lstlisting}
 #redefine commutators according to our convention
comm:=function(x, y) return x*y*x^-1*y^-1; end;

### Construction of the Braid Group ###
F:=FreeGroup("A1", "B1", "A2", "B2", "S");;
A1 := F.1;;
B1 := F.2;;
A2 := F.3;;
B2 := F.4;;
S  := F.5;;

B21:= comm(A1, S^-1*A1*S^-1);;
B22:= comm(B1, S^-1*B1*S^-1);;
B23:= comm(A2, S^-1*A2*S^-1);;
B24:= comm(B2, S^-1*B2*S^-1);;
 
B31:= comm(A2, S^-1*A1*S);;
B32:= comm(A2, S^-1*B1*S);;
B33:= comm(B2, S^-1*A1*S);;
B34:= comm(B2, S^-1*B1*S);;

B41:= comm(A1, S^-1*B1*S^-1)*S^-2;;
B42:= comm(A2, S^-1*B2*S^-1)*S^-2;;

BTR:=comm(A1, B1^-1)*comm(A2, B2^-1)*S^-2;; 

B:=F/[B21, B22, B23, B24, B31, B32, B33, B34, B41, B42, BTR];;
A1 := B.1;;
B1 := B.2;;
A2 := B.3;;
B2 := B.4;;
S  := B.5;;
\end{lstlisting}

We now construct the complete set \verb|list_generating_vectors| of generating vectors for $\D_8$. There are $1920$ of them, in agreement with Theorem \ref{thm:dihedral}.
\begin{lstlisting}
T:=FreeGroup("x", "y");;
x:=T.1;; y:=T.2;;
G:=T/[x^2, y^4, (x*y)^2];;
x:=G.1;; y:=G.2;;

AutG:=AutomorphismGroup(G);;

GG:=[];
for g in G do
if Order(g)>1 then
Add(GG, g);
fi; od;

list_generating_vectors:=[];;

for a1 in G do
for a2 in G do 
for b1 in G do 
for b2 in G do
for s in GG do

R21:= comm(a1, s^-1*a1*s^-1);;
R22:= comm(b1, s^-1*b1*s^-1);;
R23:= comm(a2, s^-1*a2*s^-1);;
R24:= comm(b2, s^-1*b2*s^-1);;
 
R31:= comm(a2, s^-1*a1*s);;
R32:= comm(a2, s^-1*b1*s);;
R33:= comm(b2, s^-1*a1*s);;
R34:= comm(b2, s^-1*b1*s);;

R41:= comm(a1, s^-1*b1*s^-1)*s^-2;;
R42:= comm(a2, s^-1*b2*s^-1)*s^-2;;

TR:=comm(a1, b1^-1)*comm(a2, b2^-1)*s^-2; ;

H:=Subgroup(G, [a1, b1, a2, b2, s]);;

if 
Order(R21)=1 and
Order(R22)=1 and
Order(R23)=1 and
Order(R24)=1 and
Order(R31)=1 and
Order(R32)=1 and
Order(R33)=1 and
Order(R34)=1 and  
Order(R41)=1 and
Order(R42)=1 and
Order(TR)=1 and
Order(H)=Order(G) then
Add(list_generating_vectors, [a1,b1,a2,b2,s]);
fi; od; od; od; od; od; 

Size(list_generating_vectors);
1920
\end{lstlisting}

Next, we compute the number of orbits of the set \verb|list_generating_vectors| under the action of $\operatorname{Aut}(G)$, and we construct the set 
\verb|representative_generating_vectors| by choosing one representative from each orbit. The cardinality of this set is 240, in agreement with Corollary \ref{cor:dihedral_up_to_auto}. 

\begin{lstlisting}
#list of generating vectors up to Aut(G)
orb_list_generating_vectors:=OrbitsDomain(AutG, list_generating_vectors, OnTuples);;
Size(orb_list_generating_vectors);
240

#list of representatives 
representative_generating_vectors:=[];;
for y in [1..Size(orb_list_generating_vectors)] do
Add(representative_generating_vectors, orb_list_generating_vectors[y][1]);
od;
Size(representative_generating_vectors);
240
\end{lstlisting}

For each generating vector in \verb|representative_generating_vectors|  we now compute the first homology group $H_1(S, \, \mathbb{Z})$ of the $G$-cover $S \to \Sym^2 \Sigma_{g}$ by using  Proposition \ref{prop:fundamental-group-branched-cover}.

\begin{lstlisting}
#computation of H1 for all isomorphism classes of generating vectors
set_ab_inv:=[];;

for l in representative_generating_vectors do

a1:=l[1];; b1:=l[2];; a2:=l[3];; b2:=l[4];; s:=l[5];; m:=Order(s);;

A1 := B.1;;
B1 := B.2;;
A2 := B.3;;
B2 := B.4;;
S  := B.5;;

Braid:=B/[S^m];;
A1 := Braid.1;;
B1 := Braid.2;;
A2 := Braid.3;;
B2 := Braid.4;;
S  := Braid.5;;

hom:=GroupHomomorphismByImages(Braid,G,[A1, B1, A2, B2, S], [a1, b1, a2, b2, s]);;
K:=Kernel(hom);; 
Print(AbelianInvariants(K), "\n\n");;

Add(set_ab_inv, AbelianInvariants(K));; od;;

##############################################
# Multiplicities(list)
#
# Returns a GAP function providing distinct elements of a list
# together with their multiplicities.
##############################################
Multiplicities := function(list)
    local dict, keys, x;

    dict := NewDictionary([], true);
    keys := [];

    for x in list do
        if KnowsDictionary(dict, x) then
            AddDictionary(dict, x, LookupDictionary(dict, x) + 1);
        else
            AddDictionary(dict, x, 1);
            Add(keys, x);
        fi;
    od;

    return List(keys, x -> [x, LookupDictionary(dict, x)]);
end;

#####reduced list of abelian invariants with occurrences######
M:=Multiplicities(set_ab_inv);;
for m in M do
Print(m[1], " "); Print(" "); Print (m[2], "\n");
od;
[ 0, 0, 0, 0, 0, 0, 0, 0, 0, 0, 0, 0 ]  15
[ 0, 0, 0, 0, 0, 0, 0, 0 ]  180
[ 0, 0, 0, 0, 0, 0, 0, 0, 2 ]  45
\end{lstlisting}

The output shows that, up to automorphisms of $\D_8$, there are
\begin{itemize}
\item[$\boldsymbol{(a)}$] $15$ admissible epimorphisms $\varphi \colon \B_2(\Sigma_2) \to \D_8$  such that  $H_1(S, \, \mathbb{Z})=\mathbb{Z}^{12}$;
\item[$\boldsymbol{(b)}$]  $180$ admissible epimorphisms $\varphi \colon \B_2(\Sigma_2) \to \D_8$ such that  $H_1(S, \, \mathbb{Z})=\mathbb{Z}^{8}$;
\item[$\boldsymbol{(c)}$]  $45$ admissible epimorphisms $\varphi \colon \B_2(\Sigma_2) \to \D_8$  such that  $H_1(S, \, \mathbb{Z})=\mathbb{Z}^{8} \oplus \mathbb{Z}_2$.
\end{itemize}
It is also possible to find an explicit example of a generating vector $(\a_1, \, \a_2, \, \b_1, \, \b_2, \, \s)$  realizing the three situations, by running the following script:
\begin{lstlisting}
##### Computation of representatives for every occurrence in H_1(S, Z) #######

L_1:=[];; L_2:=[];; L_3:=[];;
for y in [1..Size(orb_list_generating_vectors)] do

l:=orb_list_generating_vectors[y][1];
a1:=l[1];; b1:=l[2];; a2:=l[3];; b2:=l[4];; s:=l[5];; m:=Order(s);

A1 := B.1;;
B1 := B.2;;
A2 := B.3;;
B2 := B.4;;
S  := B.5;;

Braid:=B/[S^m];;
A1 := Braid.1;;
B1 := Braid.2;;
A2 := Braid.3;;
B2 := Braid.4;;
S  := Braid.5;;

hom:=GroupHomomorphismByImages(Braid,G,[A1, B1, A2, B2, S],
[a1, b1, a2, b2, s]);;
K:=Kernel(hom);; 
AbK:=AbelianInvariants(K);

if AbK=[ 0, 0, 0, 0, 0, 0, 0, 0, 0, 0, 0, 0 ] then Add(L_1, l);; fi;
if AbK=[ 0, 0, 0, 0, 0, 0, 0, 0] then Add(L_2, l);; fi;
if AbK=[ 0, 0, 0, 0, 0, 0, 0, 0, 2] then Add(L_3, l);; fi;
od; 

L_1[1]; L_2[1]; L_3[1];
[ <identity ...>, <identity ...>, <identity ...>, x, x*y ]
[ <identity ...>, <identity ...>, <identity ...>, y, x ]
[ <identity ...>, x, <identity ...>, y^2, x*y ]
\end{lstlisting}
The output shows that the vector $(1, \, 1, \, 1, \, x, \, xy)$ realizes case $\boldsymbol{(a)}$, the vector  $(1, \, 1, \, 1, \, y, \, x)$ realizes case $\boldsymbol{(b)}$ and the vector $(1, \, 1, \, 1, \, y^2, \, xy)$ realizes case $\boldsymbol{(c)}$.

As an immediate application of  Corollary \ref{cor:invariants_S_n=2}, we can now compute the invariants of $S$ in the three cases.

\begin{proposition} \label{prop:invariants_a_b_c}
The following holds.
\begin{itemize}
\item In case $\boldsymbol{(a)}$, the surface $S$ satisfies $p_g(S)=9, \; q(S)=6$ and $K_S^2=32$. Moreover, its Betti numbers are
\begin{equation}
b_0(S)=1, \; \;  \; b_1(S)=12, \; \;  \; b_2(S)=38, \; \; \;  b_3(S)=12, \; \;  \; b_4(S)=1.
\end{equation}
\item In cases $\boldsymbol{(b)}$ and $\boldsymbol{(c)}$, the surface $S$ satisfies $p_g(S)=7, \, q(S)=4$ and $K_S^2=32$. Moreover, its Betti numbers are
\begin{equation}
b_0(S)=1, \; \;  \;  b_1(S)=8, \; \;   \; b_2(S)=30, \;\;  \;  b_3(S)=8, \;\;  \; b_4(S)=1.
\end{equation}
\end{itemize}
\end{proposition}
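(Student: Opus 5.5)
We will read off all the invariants from Corollary \ref{cor:invariants_S_n=2}, combined with the value of $b_1(S)$ that the \verb|GAP4| computation provides in each case. In all three cases $G=\D_8$ and $g=2$. Since $\D_8$ is dihedral, Theorem \ref{thm:dihedral} and Lemma \ref{lem:s-reflection-m-even} tell us that $\s$ is a reflection, so the epimorphisms are of type $(2,2)$. Corollary \ref{cor:invariants_S_n=2} with $|G|=8$ and $g-1=1$ then gives, uniformly in all cases,
\begin{equation}
\chi_{\operatorname{top}}(S)=16, \qquad K_S^2=32, \qquad \chi(\mathcal{O}_S)=4.
\end{equation}

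Next we determine $b_1(S)$. It is the rank of the free part of $H_1(S,\,\mathbb{Z})$, which the script computed as the abelianization of the kernel in \eqref{eq:fundamental-group-branched-cover} (Proposition \ref{prop:fundamental-group-branched-cover}). This gives $b_1=12$ in case $\boldsymbol{(a)}$ and $b_1=8$ in cases $\boldsymbol{(b)}$ and $\boldsymbol{(c)}$; the $\mathbb{Z}_2$ summand in case $\boldsymbol{(c)}$ is torsion and does not contribute. Since $S$ is a smooth projective surface, Hodge theory gives $q(S)=b_1(S)/2$, so $q=6$ in case $\boldsymbol{(a)}$ and $q=4$ otherwise. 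From $\chi(\mathcal{O}_S)=1-q+p_g$ we then get $p_g=\chi(\mathcal{O}_S)-1+q$, which is $9$ in case $\boldsymbol{(a)}$ and $7$ in the other cases.

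Finally, for the Betti numbers: $S$ is connected, because $\varphi$ is surjective and so the cover is irreducible. Hence $b_0=b_4=1$, and Poincaré duality gives $b_3=b_1$. The second Betti number follows from $\chi_{\operatorname{top}}=2-2b_1+b_2$, that is $b_2=\chi_{\operatorname{top}}(S)-2+2b_1$. This yields $b_2=16-2+24=38$ in case $\boldsymbol{(a)}$ and $b_2=16-2+16=30$ in cases $\boldsymbol{(b)}$ and $\boldsymbol{(c)}$.

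The only non-formal input is the rank of $H_1(S,\,\mathbb{Z})$, which rests on the computer calculation; everything else is bookkeeping. As a sanity check independent of \verb|GAP4|, one can compute $q(S)=g+\dim H^1(\Sigma\text{-part})$ by character theory: $q(S)$ equals the sum over irreducible characters of $G$ of the multiplicities in $H^0(\Omega^1_S)$. In case $\boldsymbol{(a)}$ the epimorphism factors through a map to an intermediate abelian quotient, which could explain the extra irregularity. I would not pursue this check unless a discrepancy arose.
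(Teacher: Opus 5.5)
Your proposal is correct and follows the paper's own route ("an immediate application of Corollary~\ref{cor:invariants_S_n=2}"): since $\s$ is a reflection, the cover has $n=2$, so $|G|=8$ and $g=2$ give $\chi_{\operatorname{top}}(S)=16$, $K_S^2=32$ and $\chi(\mathcal{O}_S)=4$; the rank of $H_1(S,\,\mathbb{Z})$ from the \texttt{GAP4} computation then fixes $q$ and $p_g$ (via $q=b_1/2$ and $\chi(\mathcal{O}_S)=1-q+p_g$) and all Betti numbers (via connectedness, Poincar\'e duality and $\chi_{\operatorname{top}}(S)=2-2b_1+b_2$). Drop your closing ``sanity check'': it is unnecessary and vague, and taken literally it fails, since a surjection onto the non-abelian group $\D_8$ cannot factor through an abelian group.
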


\begin{remark}\label{rem:invariants_a_b_c}
    Since our construction is topological, we can start from an \emph{arbitrary} Riemann surface $\Sigma_2$. So, cases $\boldsymbol{(b)}$ and $\boldsymbol{(c)}$ provide two $3$-dimensional families of minimal surfaces of general type with $p_g(S)=7, \, q(S)=4$ and $K_S^2=32$ such that 
\begin{itemize}
\item any two surfaces lying in different families have the same biregular invariants and the same Betti numbers;
\item any two surfaces lying in different families  are not homotopically equivalent, since they have different torsion part in the first homology group. 
\end{itemize}
\end{remark}

\section*{Acknowledgements}
Francesco Polizzi was partially supported by GNSAGA-INdAM. Part of this work was done in January 2026, when the third author visited the Universit\"{a}t des Saarlandes. He is grateful to the members of the Arbeitsgruppe Algebraic and Complex Geometry  for the invitation and the hospitality. He also thanks G. Cutolo for insightful conversations.

\end{document}